%28.01.08-ABB
%27.03.08-RJM
%05.04.08-RJM
%07.04.08-RJM
%08.04.08-RJM. ABB example incorporated
%11-04.08-RJM Revised ABB example incorporated
%18-04.08-RJM Comment on Fu-Keller incorporated
%10-06.09-ABB Revising 
%19-06.09-ABB Revising
%22-06.09-RJM Revising
%24-06.09-ABB Revising
%20-09.09- Revised by RJM
%21-09.09- Revised by RJM
%22-09.09- Revised by ABB

\documentclass[11pt,a4paper,reqno]{amsart}
\usepackage{graphics}
\usepackage{amsmath}
\usepackage{latexsym}
\usepackage{amssymb}
\usepackage[all]{xy}
\xyoption{matrix}
\xyoption{arrow}

\begin{document}

\newcommand{\lcm}{\operatorname{lcm}\nolimits}
\newcommand{\coker}{\operatorname{coker}\nolimits}
\renewcommand{\th}{\operatorname{th}\nolimits}
\newcommand{\rej}{\operatorname{rej}\nolimits}
\newcommand{\extto}{\xrightarrow}
\renewcommand{\mod}{\operatorname{mod}\nolimits}
\newcommand{\Sub}{\operatorname{Sub}\nolimits}
\newcommand{\ind}{\operatorname{ind}\nolimits}
\newcommand{\Fac}{\operatorname{Fac}\nolimits}
\newcommand{\add}{\operatorname{add}\nolimits}
\newcommand{\Hom}{\operatorname{Hom}\nolimits}
\newcommand{\Rad}{\operatorname{Rad}\nolimits}
\newcommand{\RHom}{\operatorname{RHom}\nolimits}
\newcommand{\uHom}{\operatorname{\underline{Hom}}\nolimits}
\newcommand{\End}{\operatorname{End}\nolimits}
\renewcommand{\Im}{\operatorname{Im}\nolimits}
\newcommand{\Ker}{\operatorname{Ker}\nolimits}
\newcommand{\Coker}{\operatorname{Coker}\nolimits}
\newcommand{\Ext}{\operatorname{Ext}\nolimits}
\newcommand{\op}{{\operatorname{op}}}
\newcommand{\Ab}{\operatorname{Ab}\nolimits}
\newcommand{\id}{\operatorname{id}\nolimits}
\newcommand{\pd}{\operatorname{pd}\nolimits}
\newcommand{\ql}{\operatorname{q.l.}\nolimits}
\newcommand{\rank}{\operatorname{rank}\nolimits}
\newcommand{\A}{\operatorname{\mathcal A}\nolimits}
\newcommand{\W}{\operatorname{\mathcal W}\nolimits}
\newcommand{\C}{\operatorname{\mathcal C}\nolimits}
\newcommand{\D}{\operatorname{\mathcal D}\nolimits}
\newcommand{\E}{\operatorname{\mathcal E}\nolimits}
\newcommand{\X}{\operatorname{\mathcal X}\nolimits}
\newcommand{\Y}{\operatorname{\mathcal Y}\nolimits}
\newcommand{\F}{\operatorname{\mathcal F}\nolimits}
\newcommand{\Z}{\operatorname{\mathbb Z}\nolimits}
\renewcommand{\P}{\operatorname{\mathcal P}\nolimits}
\newcommand{\T}{\operatorname{\mathcal T}\nolimits}
\newcommand{\G}{\Gamma}
\renewcommand{\L}{\Lambda}
\newcommand{\bdot}{\scriptscriptstyle\bullet}
\renewcommand{\r}{\operatorname{\underline{r}}\nolimits}
\newtheorem{lemma}{Lemma}[section]
\newtheorem{prop}[lemma]{Proposition}
\newtheorem{cor}[lemma]{Corollary}
\newtheorem{thm}[lemma]{Theorem}
\newtheorem*{thmA}{Theorem A}
\newtheorem{rem}[lemma]{Remark}
\newtheorem{defin}[lemma]{Definition}
\newtheorem{example}[lemma]{Example}

\title[Denominators]{Denominators in cluster algebras of affine type}

\author[Buan]{Aslak Bakke Buan}
\address{Institutt for matematiske fag\\
Norges teknisk-naturvitenskapelige universitet\\
N-7491 Trondheim\\
Norway}
\email{aslakb@math.ntnu.no}

\author[Marsh]{Robert J. Marsh}
\address{School of Mathematics \\
University of Leeds \\
Leeds \\
LS2 9JT \\
UK
}
\email{marsh@maths.leeds.ac.uk}

\keywords{Cluster algebra, cluster-tilted algebra, tame hereditary algebra, exteded Dynkin, 
path algebra, cluster category, denominator, tilting theory}
\subjclass[2000]{Primary: 16G20, 16S99; Secondary 16G70, 16E99, 17B99}

\begin{abstract}
The Fomin-Zelevinsky \emph{Laurent phenomenon} states that every cluster
variable in a cluster algebra can be expressed as a Laurent polynomial in the
variables lying in an arbitrary initial cluster.
We give representation-theoretic formulas for the denominators
of cluster variables in cluster algebras of affine type.
The formulas are in terms of the dimensions of spaces of
homomorphisms in the corresponding cluster category, and hold for any
choice of initial cluster.
\end{abstract}

\thanks{A. B. Buan was supported by a Storforsk grant no.\ 167130 from
the Norwegian Research Council. R. J. Marsh was supported by
Engineering and Physical Sciences Research Council grant EP/C01040X/2.}

\maketitle

\section*{Introduction}
Cluster algebras were introduced by Fomin and Zelevinsky in~\cite{fz02}.
They have strong links with the representation theory of finite dimensional
algebras (see e.g.\ the survey articles~\cite{bm,keller2}), with semisimple
algebraic groups and the dual semicanonical basis of a quantum group
(see e.g.\ the survey article~\cite{gls}), and with many other areas
(see e.g.\ the survey article~\cite{fz03}); these articles contain many
further references.

Here we consider acyclic coefficient-free cluster algebras of affine type,
i.e.\ those which can be given by an extended Dynkin quiver.
We give a formula expressing the denominators of cluster variables in terms 
of any given initial cluster in terms of dimensions of certain $\Hom$-spaces
in the corresponding cluster category.
The representation theory, and hence the cluster category, is well understood
in the tame case. Thus, the formula can be used to compute the
denominators explicitly.

We assume that $k$ is an algebraically closed field.
Caldero and Keller~\cite{ck2} (see also~\cite{bckmrt})
have shown, using the Caldero-Chapoton map~\cite{cc}, that for an acyclic
quiver $Q$, the cluster variables of the acyclic cluster algebra $\A_Q$ are in
bijection with the indecomposable exceptional objects in the cluster category
$\C_H$, where $H=kQ$ is the path algebra of $Q$.
Furthermore, under this correspondence the clusters correspond to
cluster-tilting objects. 
We denote by $x_M$ the cluster variable corresponding to the exceptional
indecomposable $M$ in $\C_{kQ}$.

Recall that an indecomposable regular $H$-module $X$ lies in a connected
component of the AR-quiver of $H$ known as a tube, which we denote by $\T_X$.  
For a regular indecomposable exceptional module $X$, we let $\W_X$ denote the
wing of $X$ inside $\T_X$, i.e.\ the category of subfactors of $X$ inside
$\T_X$.  We let $\tau$ denote the Auslander-Reiten translate. 

We prove the following theorem.

\begin{thmA}\label{t:main}
Let $Q$ be an extended Dynkin quiver. Let $H$ be the path algebra of $Q$,
and let $\{y_1, \dots, y_n \} = \{x_{\tau T_1}, \dots, x_{\tau T_n} \}$ 
be an arbitrary initial seed of the cluster algebra $\A_Q$, where
$T = \amalg_i T_i$ is a cluster-tilting object in $\C_{kQ}$.
Let $X$ be an exceptional object of $\C$ not isomorphic to $\tau T_i$
for any $i$. Then, in the expression $x_X = f/m$ in reduced form
we have $m = \prod_i y_i^{d_i}$, where
$$d_i = \begin{cases} \dim \Hom_{\C}(T_i,X)-1 &
\parbox[t]{6.5cm}{
if there is a tube of rank $t+1\geq 2$ containing $T_i$ and $X$,
$\ql T_i=t$ and $X \not \in \W_{\tau T_i}$,} 
\\  \dim \Hom_{\C}(T_i,X) &
\text{ otherwise.} 
\end{cases}$$
\end{thmA}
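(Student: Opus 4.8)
The plan is to give the right-hand side a categorical meaning and then match it against the denominator extracted from the Caldero--Chapoton expansion of $x_X$. Write $\Gamma=\End_{\C}(T)^{\op}$ for the cluster-tilted algebra and $F=\Hom_{\C}(T,-)\colon\C\to\mod\Gamma$ for the associated functor. Then $M:=FX$ is a $\Gamma$-module whose $i$-th dimension-vector coordinate is exactly $\dim\Hom_{\C}(T_i,X)$, so the assertion is that the denominator vector of $x_X$ with respect to the seed $\{x_{\tau T_i}\}$ equals $\underline{\dim}\,M$, except that it drops by one in the $i$-th coordinate in the single tube configuration described. I would first record the base case $T=H$: here $T_i=P_i$, $\tau T_i=P_i[1]$ in $\C$, so $x_{\tau T_i}$ is the $i$-th initial cluster variable, $\dim\Hom_{\C}(P_i,X)=[\underline{\dim}\,X]_i$, and the statement reduces to the classical denominator theorem for the standard seed; note that no tube of the required form contains a projective, so the correction never arises here. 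This both fixes the normalisation and confirms the formula in the starting case.

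For general $T$ I would pass to the expansion of $x_X$ in the variables $x_{\tau T_i}$ provided by the Caldero--Chapoton map and its cluster-category interpretation~\cite{cc,ck2}: $x_X$ is a sum over submodules of $M$ of Euler characteristics of quiver Grassmannians $\operatorname{Gr}_{\mathbf e}(M)$, times monomials in the $x_{\tau T_i}$, over a common denominator $\prod_i x_{\tau T_i}^{[\underline{\dim}\,M]_i}$. Extracting the reduced denominator then amounts to deciding, for each $i$, whether the numerator is divisible by $x_{\tau T_i}$, i.e.\ whether the extremal monomial in the $i$-th variable survives. In the \emph{generic} situation --- $X$ and $T_i$ lying in different components (one preprojective or preinjective, or in distinct tubes) --- the extremal term has Euler characteristic $1$ and cannot cancel, so $d_i=[\underline{\dim}\,M]_i=\dim\Hom_{\C}(T_i,X)$, exactly as claimed.

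The heart of the argument, and the step I expect to be the main obstacle, is the case where $T_i$ and $X$ lie in a common tube $\T$ of rank $t\geq 2$. Here I would work entirely inside $\T$, whose module category is equivalent to the category of nilpotent representations of a cyclic quiver, indexing indecomposables by quasi-socle and quasi-length and recalling that exceptional modules have quasi-length at most $t-1$. Using the description of morphisms in the cluster category as ordinary homomorphisms together with a $\tau$-twisted extension group, I would compute $\dim\Hom_{\C}(T_i,X)$ directly from the mesh combinatorics of $\T$, and separately determine the extremal coefficient in the expansion above via the Euler characteristics of the relevant Grassmannians of $M$. The expectation is that these agree \emph{unless} $T_i$ has the maximal exceptional quasi-length $t-1$ and $X$ lies outside the wing $\W_{\tau T_i}$: in that configuration a single one-dimensional contribution to $\Hom_{\C}(T_i,X)$ fails to raise the denominator, so that the extremal $i$-th monomial cancels and $d_i=\dim\Hom_{\C}(T_i,X)-1$.

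The delicate points will be, first, verifying non-cancellation of the extremal terms in the generic case (so that the denominator vector genuinely equals $\underline{\dim}\,M$ there), and second, pinning down \emph{precisely} the combinatorial threshold $\ql T_i=t-1$, $X\notin\W_{\tau T_i}$ that isolates the single unit of overcounting. I would handle the latter by a careful case analysis on the relative positions of the quasi-socles and quasi-lengths of $X$, $T_i$ and $\tau T_i$ in $\T$, reducing each configuration to a short $\Hom$- and $\Ext$-dimension computation in the tube; this bookkeeping, rather than any single conceptual difficulty, is where I expect the bulk of the work to lie.
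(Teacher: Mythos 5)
Your route --- expand $x_X$ via a cluster character with respect to $T$ and read the denominator off extremal monomials --- is genuinely different from the paper's proof, which never computes a Grassmannian: the paper proceeds by induction on mutation, using the exchange-compatibility machinery of \cite{bmr2} (Propositions \ref{p:comp} and \ref{p:tamecompatible}), a reduction of all transjective objects to a single transjective cluster-tilting object (Proposition \ref{p:one_enough}), and, in the critical tube case, explicitly constructed exchange triangles (Lemmas \ref{l:good} and \ref{l:seq}) fed into the exchange relation $x_Xx_{X^{\ast}}=x_B+x_{B'}$ together with the lcm/coprimality/positivity/Laurent-phenomenon argument adapted from \cite[Prop.~3.1]{bmrt}. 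Your plan is not unreasonable in principle, but as written it has two genuine gaps, and they sit exactly where the content of the theorem lies. First, the expansion you invoke is not supplied by \cite{cc,ck2}: those references give the Grassmannian formula only for the standard seed $T=H$. For an arbitrary cluster-tilting object $T$ you need the cluster characters of \cite{palu} or \cite{fk}, and what that formalism yields for free is only the inequality $d_i\le\dim\Hom_{\C}(T_i,X)$ (this is precisely the paper's remark citing \cite[5.8]{fk} and \cite{dwz}); the equality, and the exact locus where it fails by one, is the theorem itself.

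Second --- and this is the decisive gap --- both of your key steps are assertions rather than arguments. In the generic case you claim that ``the extremal term has Euler characteristic $1$ and cannot cancel,'' but nothing in your setup rules out cancellation: the map sending a submodule dimension vector $e$ of $\Hom_{\C}(T,X)$ to the exponent vector of its Laurent monomial has linear part given by the antisymmetrized Euler form (the exchange matrix), which is degenerate in general (always when $n$ is odd), so distinct strata can contribute to the same monomial; moreover the relevant Euler characteristics are those of Grassmannians of modules over the cluster-tilted algebra $\End_{\C}(T)^{\op}$, for which no positivity result was available. Indeed, the theorem you are proving shows that extremal cancellation \emph{does} occur: in the configuration $\ql T_i=t-1$, $X\notin\W_{\tau T_i}$ the exponent genuinely drops by one. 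So any argument for generic non-cancellation must isolate the structural feature that distinguishes the two situations, and your outline contains none. Likewise, deferring the tube case to ``bookkeeping'' with Hom- and Ext-computations in the tube cannot suffice: those computations only produce the numbers $\dim\Hom_{\C}(T_i,X)$, and say nothing about which Grassmannian strata survive in the numerator. The paper's substitute for this missing mechanism is Lemma \ref{l:seq}: a carefully chosen exchange pair, built from Bongartz complements (Lemmas \ref{l:bon-reg} and \ref{l:good}) and wing combinatorics, for which one proves
$\max(\dim\Hom(M,B),\dim\Hom(M,B'))=\dim\Hom(M,X)+\dim\Hom(M,X^{\ast})-1$,
after which the drop by exactly one is extracted algebraically from the exchange relation. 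Some device playing this role is indispensable, and your proposal has none.
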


We remark that representation-theoretic expressions for denominators of
cluster variables for an arbitrary initial seed were given
in~\cite{ccs1} for type $A$ and for any simply-laced Dynkin quiver
in~\cite{ccs2,reitentodorov}.
In the general case, for an initial seed with acyclic exchange quiver,
it was shown in~\cite{bmrt,ck2} that denominators
of cluster variables are given by dimension vectors (see the next section
for more details). The general case for an arbitrary initial seed
was studied in~\cite{bmr2}. In particular, it was shown that for an affine
cluster algebra, provided the cluster-tilting object corresponding to the
initial seed contains no regular summand of maximal quasilength in its
tube, the denominators of all cluster variables are given by dimension vectors.
Cluster variables in affine cluster algebras of rank $2$ have been studied
in~\cite{cz,mp,sz,zelevinsky}.
The present article completes the denominator picture (for an arbitrary
initial seed), in terms of dimension vectors, for affine (coefficient-free)
cluster algebras.

In~\cite[6.6]{fk} it is shown that for any cluster category (and in fact in
a wider context), the dimension vector of a module coincides with the
corresponding \emph{$f$-vector} in the associated
cluster algebra with principal coefficients. See~\cite[Sec. 6]{fk} for the
definition of $f$-vectors. 

Thus our results
determine when Conjecture 7.17 of~\cite{fz07} holds for affine cluster algebras.
We also remark that in Theorem A, each exponent in the denominator is
less than or equal to the corresponding entry in the dimension vector,
in agreement with~\cite[5.8]{fk} and~\cite{dwz}.

Representation-theoretic expressions for cluster
variables have been widely studied; see for
example~\cite{ck1,dupont,hubery,palu,xx1,xx2,xx3,zhu}.
See in particular~\cite{bkl,dupont} for other aspects of cluster
combinatorics associated with tubes, and see
e.g.~\cite{musiker, parsons, propp, schiffler, st, yz} for related
combinatorial constructions.

In Section $1$, we recall some of the results described in the previous
paragraph.
In Section $2$, we recall some standard facts about tame hereditary
algebras.
In Section $3$, we study the transjective component of the cluster category, 
to prepare for the proof of Theorem A.
In Section $4$, we study regular objects in the cluster category, and
then in Section $5$ we prove the main theorem, and in Section $6$ we give
a small example to illustrate it.

\section{Preliminaries}
Let $Q$ be a finite connected acyclic quiver and $k$ an algebraically
closed field. Then $H=kQ$ denotes the (finite dimensional) path algebra
of $Q$ over $k$.
Let $D^b(H)$ be the bounded derived category of finite dimensional left
$H$-modules. The category $D^b(H)$ is a triangulated category with a
suspension functor $[1]$ (the shift). Since $H$ is hereditary, the category
$D^b(H)$ has almost split triangles; see~\cite{h}, and thus has an
autoequivalence $\tau$, the Auslander-Reiten translate. Let
$\C=\C_H=D^b(H)/\tau^{-1}[1]$ be the cluster category of $H$
(introduced in~\cite{ccs1} for type $A$ and in~\cite{bmrrt} in general).
Keller~\cite{keller1} has shown that $\C$ is triangulated.
For more information about the representation theory of finite dimensional
algebras, see~\cite{ars,ass}, and see~\cite{h} for basic properties of
derived categories.

We regard $H$-modules as objects of $\C = \C_H$
via the natural embedding of the module category of $H$ in $D^b(H)$.
For a vertex $i$ of $Q$, let $P_i$ denote the corresponding
indecomposable projective $H=kQ$-module.
Note that every indecomposable object of $\C$ is either an indecomposable
$H$-module or of the form $P_i[1]$ for some $i$.

We denote homomorphisms in $\C$ simply by $\Hom(\ , \  )$,
while $\Hom_H(\ , \ )$ denotes homomorphisms in $\mod H$ (or $D^b(H)$).
For a fixed $H$, we say that a map $X \to Y$ in $\C_H$
is an \emph{$F$-map} if it is induced by a map
$X \to \tau^{-1} Y [1]$ in $D^b(H)$, where $X,Y$ are direct sums of
$H$-modules or objects of the form $P_i[1]$.
Note that the composition of two $F$-maps is zero.
 
An $H$-module $T$ is a called a \emph{partial tilting module}
if $\Ext^1_H(T,T)=0$, an \emph{almost complete tilting module} if
in addition it has $n-1$ nonisomorphic indecomposable summands, and a
\emph{tilting module} if it has $n$ such summands (by a result of
Bongartz~\cite{bon} this is equivalent to the usual notion of a tilting
module over $H$). We shall assume throughout that all such
modules are \emph{basic}, i.e. no indecomposable summand appears with
multiplicity greater than $1$. 
For more information on tilting theory see~\cite{ahk}.

The corresponding notions of \emph{cluster-tilting object},
\emph{partial cluster-tilting object} and
\emph{almost complete cluster-tilting object} in $\C$ can be defined
similarly with reference to the property
$\Ext^1_{\C}(T,T)=0$; see~\cite{bmrrt}.
Note that every cluster-tilting object in $\C$ is induced from a tilting
module over some hereditary algebra derived equivalent
to $H$~\cite[3.3]{bmrrt}.

If $\overline{T}$ is a partial tilting module (respectively, a partial
cluster-tilting object) and $\overline{T} \amalg X$ is a tilting module
(respectively, a cluster-tilting object), then $X$ is called a
\emph{complement} of $\overline{T}$.

Let $\mathcal{A}
=\mathcal{A}(Q)\subseteq \mathbb{F}=\mathbb{Q}(x_1,x_2,\ldots ,x_n)$
be the (acyclic, coefficient-free) cluster algebra defined using the
initial seed $(\mathbf{x},Q)$, where $\mathbf{x}$ is a free generating set
$\{x_1,x_2,\ldots ,x_n\}$ for $\mathbb{F}$; see \cite{fz02}.

For an object $X$ of $\C$, let
$c_X = \prod^n_{i=1} x_i^{\dim \Hom_C(P_i,X)}$.
The following gives a connection between cluster categories and acylic
cluster algebras.

\begin{thm}\label{bmrtsecond}
(a)\ \cite[2.3]{bmrt} There is a surjective map
$$\alpha \colon \{\text{cluster variables of $\mathcal{A}(Q)$} \} \to
  \{\text{indecomposable exceptional objects in }\C\}.$$ 
It induces a surjective map 
$$\bar{\alpha} \colon \{
  \text{clusters}\} \to \{ \text{cluster-tilting objects}\}.$$ 

(b)\ \cite{ck2}
There is a bijection $\beta \colon X\to x_X$ from indecomposable
exceptional objects of $\C$ to cluster variables of $\mathcal{A}$ such that for
any indecomposable exceptional $kQ$-module $X$, we have $x_X=f/c_X$
as a quotient of integral polynomials in the $x_i$ in reduced form,
where $c_X = \prod^n_{i=1} x_i^{\dim \Hom_{\C}(P_i,X)}$. \\
(c)\ \cite{bckmrt} The maps $\alpha$ and $\beta$ are mutual inverses.
\end{thm}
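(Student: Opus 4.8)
The plan is to prove the three parts with the tools native to each: part~(a) through the dictionary between Fomin--Zelevinsky seed mutation and mutation of cluster-tilting objects in $\C$, part~(b) through the Caldero--Chapoton character, and part~(c) by combining the two so that the resulting assignments are seen to invert one another.

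For~(a), I would begin with the mutation theory of cluster-tilting objects: every almost complete cluster-tilting object in $\C$ has exactly two complements $X, X^*$, joined by a pair of exchange triangles, and this gives a well-defined notion of mutation. The key step is to match this with seed mutation. I would define $\alpha$ on the initial seed and then propagate it along mutations, checking that a single exchange of a cluster variable corresponds to the exchange of complements, and that the binomial exchange relation in $\A(Q)$ is reflected by the two exchange triangles. Surjectivity of $\alpha$ then reduces to connectedness of the mutation graph, i.e.\ reachability of every cluster-tilting object from the initial one, which is known. The induced maps $\bar\alpha$ and $\widetilde\alpha$ are then formal, and quiver-preservation reduces to identifying the exchange quiver of a seed with the Gabriel quiver of the endomorphism algebra of the corresponding cluster-tilting object, which is encoded in the mutation rule.

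For~(b), I would invoke the Caldero--Chapoton map $M \mapsto X_M$, which expresses a cluster variable as a Laurent polynomial whose terms are indexed by submodule dimension vectors $e$ and weighted by Euler characteristics $\chi(\mathrm{Gr}_e(M))$ of quiver Grassmannians. First I would identify the denominator: since $X$ is a module and each $P_i$ is projective, the extra summand $\Ext^1_H(P_i,\tau^{-1}X)$ of $\Hom_\C(P_i,X)$ vanishes, so $\dim\Hom_\C(P_i,X)=\dim\Hom_H(P_i,X)=(\dim X)_i$ and hence $c_X=\prod_i x_i^{(\dim X)_i}$. The substantive point is that this denominator is reduced: the extreme terms $e=0$ and $e=\dim M$ give Grassmannians that are single points, hence carry coefficient $1$, and after clearing the denominator they contribute monomials not divisible by the relevant $x_i$; with no cancellation possible from these terms, no $x_i$ divides the numerator $f$. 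That $X_M$ is genuinely a cluster variable, and that every cluster variable arises this way, is the Caldero--Keller theorem.

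Finally, for~(c) I would deduce that $\alpha$ and $\beta$ are mutually inverse. Both are surjections between the same two sets, so it suffices to establish injectivity of one of them; injectivity of $\beta$ follows because distinct indecomposable exceptional objects are distinguished by their dimension data, hence by the denominators computed in~(b). The step I expect to be the main obstacle is the reducedness of the denominator in~(b): one must control the Euler characteristics $\chi(\mathrm{Gr}_e(M))$ finely enough to rule out any cancellation that would collapse the expected denominator, and more broadly guarantee that the Caldero--Chapoton expansion lands exactly on cluster variables without spurious simplification. This is where the positivity of the $\chi(\mathrm{Gr}_e(M))$ and the precise combinatorics of the character do the real work.
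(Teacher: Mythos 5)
This statement is not proved in the paper at all: it is quoted as background, with the proofs delegated to \cite{bmrt}, \cite{ck2} and the appendix \cite{bckmrt}. So the only comparison available is with those sources, and your outline does follow their historical route (mutation dictionary for (a), Caldero--Chapoton character for (b), a combination for (c)). However, two of your steps have genuine gaps. In (a), the crux is \emph{well-definedness} of $\alpha$, which you never address: ``propagating along mutations'' defines an assignment on mutation \emph{sequences}, not on cluster variables. The same variable, and the same seed, is reached by many different paths from the initial seed, and one must show all of them produce the same exceptional object. This is exactly the hard point in \cite{bmrt}; it is settled there by denominator arguments (the denominator of a non-initial cluster variable with respect to the acyclic initial seed is the dimension vector of the prospective object, and an exceptional module over a hereditary algebra is determined up to isomorphism by its dimension vector). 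Connectedness of the tilting graph gives surjectivity only after well-definedness is in place.

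In (c), the reduction is logically invalid: from ``two surjections between the same sets, one of which is injective'' it does not follow that the maps are mutual inverses. Two distinct bijections of a set are each injective and surjective without being inverse to one another, and here the sets are moreover infinite in affine type, so no counting argument is available either. Being mutual inverses means $\alpha(x_X)=X$ for every exceptional $X$, and establishing this is precisely the content of \cite{bckmrt}: one checks that $\beta$ (via the multiplication formula of \cite{ck2}) and $\alpha$ (by its inductive construction) are both compatible with mutation and agree on the initial seed, and then proceeds by induction along the exchange graph. Your part (b) is essentially right, including the identification $\dim \Hom_{\C}(P_i,X)=\dim\Hom_H(P_i,X)=(\dim X)_i$ and the role of the extreme submodules $e=0$ and $e=\dim X$ in reducedness, though as you yourself flag, ruling out cancellation against intermediate terms of the character requires a real argument (extremality of those monomials in a suitable order, or positivity of the Euler characteristics), not just the observation that the extreme terms have coefficient $1$.
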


We now recall some results and definitions from~\cite{bmr2}.
Assume $\Gamma$ is a quiver which is mutation-equivalent to $Q$.
By the above theorem there is a seed $(\mathbf{y},\Gamma)$ of
$\mathcal{A}$, where $\mathbf{y}=\{y_1,y_2,\ldots ,y_n\}$ is a
free generating set of $\mathbb{F}$ over $\mathbb{Q}$.
Let
$T_i=\tau^{-1}\alpha(y_i)$ for $i=1,2,\ldots ,n$, so that we have
$\alpha(y_i)=\tau T_i$.
Then $\amalg_{i=1}^n \tau T_i$ is a cluster-tilting object in $\C$ and
$\Gamma$ is the quiver of $\End_{\C}(\tau T)^{\op} \simeq \End_{\C}(T)^{\op}$
by~\cite{bmr1}.

For a polynomial $f = f(z_1, z_2 , \dots, z_n)$, we say that $f$ satisfies the
\emph{positivity condition} if $f(e_i) > 0$ for $i=1,2,\ldots ,n$, where
$e_i = (1, . . . , 1, 0, 1, . . . , 1)$ (with a $0$ in the ith position).

\begin{defin} \label{d:Tdenominator} \cite{bmr2}
Let $x$ be a cluster variable of $\mathcal{A}$ with $\alpha(x)=X$
for some exceptional indecomposable object $X$ of $\C$. We say that $x$ 
expressed in terms of the cluster $\mathbf{y}$ has a 
{\em $T$-denominator}
if either:
\begin{itemize}
\item[(I)] We have that $X$ is not isomorphic to $\tau T_i$ for any $i$, 
and $x=f/t_X$, where $f$ satisfies the positivity condition and 
$t_X=\prod_{i=1}^n y_i^{\dim \Hom_{\C}(T_i,X)}$, or 
\item[(II)] We have that $X \simeq \tau T_i$ for some $i$ and $x=y_i$. 
\end{itemize}
\end{defin}

Note that when the choice of cluster $\mathbf{y}$ is clear we also  
say that an exceptional indecomposable object $X$ of $\C$ has a
$T$-denominator, when its corresponding cluster variable has. We also
recall that in (I), the expression for $x$ must be in reduced form.
(For a contradiction, suppose that $f$ and $t_X$ have a common monomial
factor $u$. Suppose that the variable $y_i$ divides $u$.
Then $f(e_i)=0$, since $u(e_i)=0$, contradicting the fact that $f$ satisfies
the positivity condition.)

Here, in addition, we make the following definition:

\begin{defin}
Let $x$ be a cluster variable with $\alpha(x)=X$ for a
regular object $X$ of $\C$, and assume 
$x= f/\prod_{i=1}^n y_i^{d_i}$ for some cluster $\mathbf{y}$, where $f$ satisfies the positivity condition. 
We say $x$ (or $X$) has
a {\em diminished $T$-denominator} if $d_i = \dim \Hom_{\C}(T_i,X)-1$ for
all regular summands $T_i$ of $T$ with $\T_X=\T_{T_i}$ and $\ql T_i = t_i$,
where $t_i+1$ is the rank of $\T_{T_i}$,  
and $d_j = \dim \Hom_{\C}(T_j,X)$ for all other summands $T_j$.
\end{defin}

\begin{thm} \cite{bmr2}
Let $T=\amalg_{i=1}^n T_i$ be a cluster-tilting object in $\C= \C_{kQ}$ for
an acyclic quiver $Q$ and let
$\mathcal{A}=\mathcal{A}(Q)$ be the cluster algebra associated to $Q$.
Then: 
\begin{itemize}
\item[(a)] If no indecomposable direct summand of $T$ is regular then every
cluster variable of $\mathcal{A}$ has a $T$-denominator.
\item[(b)] If every cluster variable of $\mathcal{A}$ has a $T$-denominator,
then $\End_{\C}(T_i)\simeq k$ for all $i$.
\end{itemize}
Suppose in addition that $kQ$ is a tame algebra.
Then the following are equivalent: 
\begin{itemize}
\item[(i)] Every cluster variable of $\mathcal{A}$ has a $T$-denominator. 
\item[(ii)] No regular summand $T_i$ of quasi-length $t$ lies in a tube of
rank $t+1$. 
\item[(iii)] For all $i$, $\End_{\C}(T_i)\simeq k$.
\end{itemize}
\end{thm}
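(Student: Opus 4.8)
The plan is to prove Theorem~\ref{bmrtsecond} (the final statement, from~\cite{bmr2}) by establishing the implications and equivalences separately, using the structure theory of tame hereditary algebras recalled in Section~2 together with the correspondence of Theorem~\ref{bmrtsecond}(a)--(c). For part~(a), I would proceed by induction on a mutation sequence. The base case is the initial seed corresponding to $T$ itself, where by Theorem~\ref{bmrtsecond} the denominator of $x_X$ in terms of the original cluster $\mathbf{x}$ is $c_X=\prod_i x_i^{\dim\Hom_{\C}(P_i,X)}$; since $T$ has no regular summand, $T$ is induced by a tilting module and the $T_i$ play the role of a shifted projective generating set, so the denominator matches the stated $t_X$. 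For the inductive step I would use the exchange relation governing mutation of clusters together with the fact that $\dim\Hom_{\C}(T_i,X)$ transforms compatibly under the corresponding mutation of the cluster-tilting object; the key point is that mutating away from a regular-free $T$ keeps the relevant $\Hom$-dimensions nonnegative and in agreement with the denominator exponents, so no cancellation destroys the predicted power.

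For part~(b) I would argue contrapositively: suppose $\End_{\C}(T_i)\not\simeq k$ for some $i$. Then $T_i$ is necessarily a regular indecomposable of quasi-length at least the rank of its tube, equivalently a summand with a nontrivial self-extension-free endomorphism ring, and I would exhibit a single cluster variable whose denominator exponent fails to equal $\dim\Hom_{\C}(T_i,X)$. The natural candidate is a regular object $X$ lying in the same tube, where the \emph{reduced} $T$-denominator phenomenon (the $-1$ correction appearing in Theorem~A) is forced; this shows directly that not every cluster variable can have an (unreduced) $T$-denominator. The main tool here is the explicit knowledge of morphisms between regular modules within a tube, which the wing and quasi-length formalism from the preliminaries is designed to encode.

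For the equivalences under the tame hypothesis, I would close a cycle $(i)\Rightarrow(iii)\Rightarrow(ii)\Rightarrow(i)$. The implication $(i)\Rightarrow(iii)$ is exactly part~(b), which holds in general. For $(iii)\Rightarrow(ii)$ I would use that, in a tube of rank $r$, the indecomposable of quasi-length $r-1$ has endomorphism ring strictly larger than $k$ (it admits a nonscalar endomorphism factoring through the tube structure), so $\End_{\C}(T_i)\simeq k$ for all $i$ precisely rules out such summands; conversely a summand not of quasi-length $r-1$ in a rank-$r$ tube, or any preprojective/preinjective summand, has trivial endomorphism ring. Finally, for $(ii)\Rightarrow(i)$ I would combine the regular-free reduction of part~(a)-type arguments with the observation that when no summand has quasi-length $r-1$ in a rank-$r$ tube, the correction term in Theorem~A never triggers, so every denominator is the unreduced $t_X$, which is precisely the $T$-denominator condition.

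The hard part will be the inductive step in part~(a) and the $(ii)\Rightarrow(i)$ direction: both require controlling exactly when the passage between the morphism-space dimensions and the Laurent-polynomial denominators introduces a discrepancy, i.e.\ when a factor predicted by $\dim\Hom_{\C}(T_i,X)$ actually cancels in reduced form. Tracking this cancellation forces one to understand $F$-maps and the interaction of $\Hom_{\C}$ with the tube combinatorics, and it is precisely the failure of this control for quasi-length $r-1$ summands that produces the dichotomy between $(ii)$ holding and failing. I expect the regular case analysis inside a single tube — managing wings, quasi-lengths, and the resulting $\Hom$-dimension jumps — to be the principal technical obstacle, and indeed it is exactly this case that Theorem~A is designed to resolve with its $-1$ correction.
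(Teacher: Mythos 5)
Note first that this theorem is not proved in the paper at all: it is quoted from \cite{bmr2}, and what the paper recalls around it (the $T$-denominator definition, the compatibility Definition~\ref{d:excomp}, Propositions~\ref{p:comp} and~\ref{p:tamecompatible}) is exactly the machinery on which the proof in \cite{bmr2} runs: fix $T$, start the induction at the seed $\mathbf{y}=\{x_{\tau T_1},\ldots,x_{\tau T_n}\}$ itself, where clause (II) of the definition makes the claim trivial, and propagate along mutation sequences via Proposition~\ref{p:comp}(c); the entire content is then to show that each summand $T_i$ is compatible with every exchange pair encountered. Measured against this, your plan has the right outer skeleton (mutation induction plus control of $\Hom$-dimensions) but gaps at the load-bearing points. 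Your base case for (a) is both misplaced and incorrect: the induction should start at $\mathbf{y}$, where nothing needs proving, whereas you start at the initial cluster $\mathbf{x}$ and invoke Theorem~\ref{bmrtsecond}(b), claiming that since $T$ has no regular summand ``the $T_i$ play the role of a shifted projective generating set''. That is false in general: $T$ is induced by a tilting module over a derived-equivalent hereditary algebra $H'$ \cite[3.3]{bmrrt}, but its summands become the (shifted) projectives only when $\End_{\C}(T)$ is hereditary, i.e.\ when $T$ is a slice, which a transjective cluster-tilting object need not be. More seriously, your inductive step (``no cancellation destroys the predicted power'') restates the conclusion rather than proving it: what must be verified, for every exchange pair $(X,X^{\ast})$ with triangles $X^{\ast}\to B\to X\to$ and $X\to B'\to X^{\ast}\to$, is the identity $\dim\Hom_{\C}(T_i,X)+\dim\Hom_{\C}(T_i,X^{\ast})=\max(\dim\Hom_{\C}(T_i,B),\dim\Hom_{\C}(T_i,B'))$, and for transjective $T_i$ this needs the directedness and $F$-map arguments (the mirror image of Lemma~\ref{l:trans_comp} of this paper). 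You supply no such argument, and it is the whole of part (a).

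For part (b) and the equivalences the gaps are of a different kind. Part (b) is asserted for every acyclic $Q$, including wild type, but your contrapositive argument presupposes tube structure (``regular indecomposable of quasi-length at least the rank of its tube''); in wild type the regular components are of type $\Z A_{\infty}$, so quasi-length relative to a rank is unavailable and your argument does not cover that case. Even in the tame case the quoted phrase is wrong: an exceptional regular module of quasi-length at least the rank cannot exist (it would have self-extensions); failure of the $\C$-brick property means quasi-length \emph{exactly} $r-1$, by Lemma~\ref{l:homs}(a). Finally, your proofs of (b) and of (ii)$\Rightarrow$(i) invoke Theorem~A, but Theorem~A is the main result of the \emph{present} paper, proved later and using \cite{bmr2}'s Propositions~\ref{p:comp} and~\ref{p:tamecompatible} as inputs; as a proof of the \cite{bmr2} theorem this inverts the logical order and assumes a strictly stronger statement. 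The route consistent with the recalled machinery is: (ii)$\Leftrightarrow$(iii) is Lemma~\ref{l:homs}(a); (iii)$\Rightarrow$(i) is Proposition~\ref{p:tamecompatible} ($\C$-bricks are exchange compatible) fed into the induction of Proposition~\ref{p:comp}(c); and (i)$\Rightarrow$(iii) still requires exhibiting a cluster variable whose reduced denominator genuinely differs from $t_X$ --- you name the right candidate (a regular exceptional $X$ in the tube of $T_i$ outside $\W_{\tau T_i}$), but producing the discrepancy is precisely the computation carried out in Lemma~\ref{l:seq} and the concluding proposition of Section~5, not something that can be cited as ``forced''.
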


The main result (Theorem A) of this paper gives a precise descripton of the
denominators of all cluster variables for the tame case, i.e.\ also
including the case when $T$ has a regular summand $T_i$ of quasi-length
$t$ lying in a tube of rank $t+1$. 

Fix an almost complete (basic) cluster-tilting object $\overline{T}'$
in $\mathcal{C}$. Let $X,X^{\ast}$ be the two complements of $\overline{T}'$,
so that $T'=\overline{T}'\amalg X$ and $T''=\overline{T}'\amalg X^{\ast}$ are
cluster-tilting objects (see~\cite[5.1]{bmrrt}).
Let
\begin{equation}
X^{\ast} \overset{f}{\to} B \overset{g}{\to} X \overset{h}{\to}
\label{magic1}
\end{equation}
\begin{equation}
X \overset{f'}{\to} B' \overset{g'}{\to} X^{\ast}
\overset{h'}{\to} \label{magic2}
\end{equation}
be the exchange triangles corresponding to $X$ and $X^{\ast}$
(see~\cite[\S6]{bmrrt}), so that $B\to X$ is a minimal right
$\text{add}(\overline{T}')$-approximation of $X$ in $\mathcal{C}$ and
$B' \to X^{\ast}$ is a minimal right
$\text{add}(\overline{T}')$-approximation of $X^{\ast}$ in $\mathcal{C}$.
The following definition is crucial:

\begin{defin} \cite{bmr2} \label{d:excomp}
Let $M$ be an exceptional indecomposable object of $\C$. We say that
$M$ is \emph{compatible with an exchange pair $(X,X^{\ast})$}, if
either $X\simeq \tau M$, $X^{\ast}\simeq \tau M$, or, if neither of these
holds,
\begin{multline*}
\dim\Hom_{\C}(M,X)+\dim\Hom_{\C}(M,X^{\ast}) \\
=  \max(\dim\Hom_{\C}(M,B),\dim\Hom_{\C}(M,B')).
\end{multline*}
If $M$ is compatible with every exchange pair $(X,X^{\ast})$ in
$\C$ we call $M$ \emph{exchange compatible}.
\end{defin}

We also have:

\begin{prop} \cite{bmr2} \label{p:comp}
\begin{itemize}
\item[(a)]
Suppose that $(X,X^{\ast})$ is an exchange pair such that 
neither $X$ nor $X^{\ast}$ is isomorphic to $\tau M$.
Then the following are equivalent:
\begin{itemize}
\item[(i)] $M$ is compatible with the exchange pair $(X,X^{\ast})$.
\item[(ii)]
Either the sequence
\begin{equation}
0\to \Hom_{\C}(M,X^{\ast}) \to \Hom_{\C}(M,B) \to
\Hom_{\C}(M,X)\to 0
\label{firstsequence}
\end{equation}
is exact, or the sequence
\begin{equation}
0\to \Hom_{\C}(M,X) \to  \Hom_{\C}(M,B') \to
\Hom_{\C}(M,X^{\ast})\to 0
\label{secondsequence}
\end{equation}
is exact.
\end{itemize}
\item[(b)]Let $M$ be an exceptional indecomposable object of $\mathcal{C}$ and
suppose that $X\simeq \tau M$ or $X^{\ast}\simeq \tau M$.
Then we have that \begin{multline*}
\dim\Hom_{\C}(M,X)+\dim\Hom_{\C}(M,X^{\ast}) = \\
\max(\dim\Hom_{\C}(M,B),\dim\Hom_{\C}(M,B')) +1 \end{multline*}
\item[(c)]
Let $(\mathbf{x'},Q')$ be a seed, with $\mathbf{x'}=\{x'_1,\ldots ,x'_n\}$,
and assume that each $x'_i$ has a $T$-denominator.
Let $T'_i=\alpha(x'_i)$ for
$i=1,2,\ldots ,n$ and $T'=\amalg_{j=1}^n T'_j$.
Mutating $(\mathbf{x}',Q')$ at $x'_k$ we obtain a new cluster variable
$(x'_k)^{\ast}$.
Let $\overline{T}'=\amalg_{j\not=k}T'_j$.
and let $X^{\ast}$ be the unique indecomposable object in $\C$ with
$X^{\ast}\not\simeq T'_k = X$ such that $\overline{T}'\amalg X^{\ast}$ is a
cluster-tilting object.
Then the cluster variable $x_{X^{\ast}} = (x'_k)^{\ast}$ has a $T$-denominator if each summand $T_i$ of $T$ is compatible with the exchange pair
$(X, X^{\ast})$.
\end{itemize}
\end{prop}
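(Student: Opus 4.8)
The plan is to treat all three parts by applying the cohomological functor $\Hom_\C(M,-)$ to the two exchange triangles \eqref{magic1} and \eqref{magic2} and extracting information from the resulting long exact sequences. Throughout, write $p=\dim\Hom_\C(M,X^\ast)$, $q=\dim\Hom_\C(M,B)$, $r=\dim\Hom_\C(M,X)$ and $q'=\dim\Hom_\C(M,B')$, so that compatibility of $M$ with $(X,X^\ast)$ reads $p+r=\max(q,q')$. For part (a), applying $\Hom_\C(M,-)$ to \eqref{magic1} produces a long exact sequence whose middle portion is the complex $\Hom_\C(M,X^\ast)\to\Hom_\C(M,B)\to\Hom_\C(M,X)$, which is exact at the middle term. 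Since the incoming map has source of dimension $p$ and the outgoing map has target of dimension $r$, exactness at $\Hom_\C(M,B)$ forces $q\le p+r$, and symmetrically $q'\le p+r$ from \eqref{magic2}. Moreover $q=p+r$ holds exactly when the first map is injective and the second surjective, that is, exactly when the short sequence \eqref{firstsequence} is exact; likewise $q'=p+r$ is equivalent to exactness of \eqref{secondsequence}. Combining the two inequalities with these equivalences yields $\max(q,q')=p+r$ if and only if $q=p+r$ or $q'=p+r$, which is precisely the asserted equivalence of (i) and (ii).

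For part (b), suppose $X\simeq\tau M$ (the case $X^\ast\simeq\tau M$ being symmetric). Here I would use that in $\C$ one has $\tau\cong[1]$ and that $\C$ is $2$-Calabi--Yau, so that the connecting homomorphisms of the two long exact sequences are governed by the spaces $\Hom_\C(M,\tau^{\pm 1}X)$ and $\Hom_\C(M,\tau^{\pm 1}X^\ast)$. Because $M$ is exceptional, $\Hom_\C(M,\tau M)\cong\Ext^1_\C(M,M)=0$ while $\Hom_\C(M,\tau^{-1}X)\cong\End_\C(M)\cong k$. Re-running the dimension count of part (a) while tracking these kernels and cokernels, one finds that exactly one extra dimension, coming from $\End_\C(M)$, enters one of the two three-term complexes, which upgrades the inequality to the equality $p+r=\max(q,q')+1$.

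For part (c), the exchange relation in $\mathcal{A}$ determined by the pair $(X,X^\ast)$ has the form $x'_k\,(x'_k)^\ast=P+P'$, where $P$ and $P'$ are the monomials in the $x'_j$, $j\ne k$, read off from the multiplicities of the summands of $\overline{T}'$ in $B$ and in $B'$. Substituting the $T$-denominator expressions for $x'_k$ and for each $x'_j$ occurring in $P$ and $P'$, and using that the $y_i$-exponent in the denominator of $x_{T'_j}$ equals $\dim\Hom_\C(T_i,T'_j)$, the $y_i$-exponent in the denominator of $P$ is $\dim\Hom_\C(T_i,B)$ and that of $P'$ is $\dim\Hom_\C(T_i,B')$. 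Solving for $(x'_k)^\ast$ and passing to reduced form, the $y_i$-exponent in the denominator of $(x'_k)^\ast$ becomes $\max(\dim\Hom_\C(T_i,B),\dim\Hom_\C(T_i,B'))-\dim\Hom_\C(T_i,X)$, and compatibility of $T_i$ with $(X,X^\ast)$ rewrites this as $\dim\Hom_\C(T_i,X^\ast)$, the exponent required by the $T$-denominator of $X^\ast$.

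I expect the two genuine obstacles to be the following. In part (b), the delicate point is to locate precisely which of the two connecting maps absorbs the one-dimensional contribution of $\End_\C(M)$, so that the $+1$ attaches to $\max(q,q')$ rather than being double-counted or cancelled. In part (c), the subtle step is to confirm that the displayed fraction is genuinely in reduced form: one must rule out a common factor of $y_i$ in the numerator $P+P'$ after clearing denominators, which relies on the positivity of the two monomials (so that no cancellation occurs) and hence that the denominator is exactly the maximum of the two contributions.
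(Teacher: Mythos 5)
Your part (a) is fine: applying the cohomological functor $\Hom_{\C}(M,-)$ to the triangles \eqref{magic1} and \eqref{magic2} gives exactness of the three-term complexes at the middle, hence $q\le p+r$ and $q'\le p+r$, with equality in either case precisely when the corresponding short sequence \eqref{firstsequence} or \eqref{secondsequence} is exact; this is the standard argument (note the paper itself gives no proof of this proposition -- it is imported from \cite{bmr2} -- so the only in-paper point of comparison is the proof of the final proposition of Section 5, which is the analogue of part (c)). Part (b), however, has a genuine gap, and it sits exactly where this paper lives. Your claim that $M$ exceptional implies $\End_{\C}(M)\cong k$ is false in the cluster category: by Lemma~\ref{l:homs}(a), an exceptional indecomposable of quasilength $t-1$ in a tube of rank $t$ has two-dimensional endomorphism ring, and these non-$\C$-bricks are precisely the objects the paper needs (b) for. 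Moreover, even granting $\End_{\C}(M)\cong k$, your bookkeeping is incomplete: rotating the triangles gives $p=q+\dim\Im\delta$ and $p=q'+\dim\Im\delta'$ for the two connecting maps, and you must show both images are nonzero and at least one is exactly one-dimensional; you offer no reason why $\delta'\neq 0$, i.e.\ why not every map $M\to X^{\ast}$ factors through $B'\to X^{\ast}$. The clean proof avoids connecting maps entirely: if $X\simeq\tau M$, then since $\tau\cong[1]$ on $\C$ we have $\Hom_{\C}(M,Y)\cong\Hom_{\C}(X,Y[1])=\Ext^1_{\C}(X,Y)$ for every $Y$; rigidity of $X$ gives $\Hom_{\C}(M,X)=0$, the cluster-tilting property gives $\Hom_{\C}(M,B)\cong\Ext^1_{\C}(X,B)=0$ and likewise $\Hom_{\C}(M,B')=0$ (both lie in $\add\overline{T}'$), and $\Hom_{\C}(M,X^{\ast})\cong\Ext^1_{\C}(X,X^{\ast})\cong k$ because $(X,X^{\ast})$ is an exchange pair \cite{bmrrt}; both sides of the displayed equality are then equal to $1$.

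For part (c), the step you flag as ``subtle'' is in fact the entire content of the statement, and it does not follow from ``positivity of the two monomials'' alone. After substitution one must show that $\bigl(f_{B_0}\,m\,y_{B_1}+f_{B'_0}\,m'\,y_{B'_1}\bigr)/f_X$ is a polynomial (this needs the Laurent phenomenon of \cite{fz02}) which is not divisible by any $y_i$ (this needs positivity of the numerators at the evaluation points $e_i$ \emph{together with} coprimality of the monomials $m\,y_{B_1}$ and $m'\,y_{B'_1}$, which rests on $B$ and $B'$ having no common indecomposable summand plus an $\Ext^1_{\C}$ argument); dividing by $f_X$ is exactly where naive positivity arguments break. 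You also suppress the degenerate cases, which genuinely occur: summands of $B,B'$ of the form $\tau T_j$ contribute factors $y_j$ to numerators rather than denominators (one needs $\Hom_{\C}(T_i,\tau T_j)=0$ to see the denominator exponents are nevertheless $\dim\Hom_{\C}(T_i,B)$ and $\dim\Hom_{\C}(T_i,B')$); if $X\simeq\tau T_i$ then $x'_k=y_i$ and your generic exponent formula $\max(\,\cdot\,,\,\cdot\,)-\dim\Hom_{\C}(T_i,X)$ gives the wrong answer -- it is precisely the $+1$ of part (b) that restores the count; and if $X^{\ast}\simeq\tau T_i$ the conclusion to be proved has a different shape altogether, namely $(x'_k)^{\ast}=y_i$. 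All of these ingredients (the splitting $B=B_0\amalg B_1$, the $\lcm$ and coprimality arguments, the Laurent/positivity step, and the division into Cases I and II) are visible in the paper's own proof of its final proposition in Section 5; your sketch reproduces its skeleton but not the parts that make it work.
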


Note that (c) is used as an induction step in~\cite{bmr2} for showing that
cluster variables have $T$-denominators.
Also, in~\cite{bmr2} it is shown that in (c) the cluster
variable $x_{X^{\ast}} = (x'_k)^{\ast}$ has a $T$-denominator if and
only if each summand $T_i$ of $T$ is compatible with the exchange pair
$(X, X^{\ast})$, but we shall not need this stronger statement.

\begin{prop} \cite{bmr2} \label{p:tamecompatible}
Let $H$ be a tame hereditary algebra, and let $M$ be an indecomposable
exceptional object in $\C$. Then $M$ is exchange compatible if and only if
$\End_{\C}(M)\simeq k$.
\end{prop}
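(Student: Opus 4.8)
The plan is to prove both implications through the dimension-counting reformulation of compatibility provided by Proposition~\ref{p:comp}(a), after first pinning down which indecomposable exceptional objects have $\End_\C(M)\simeq k$. Using the standard description of morphisms in the cluster category, $\Hom_\C(M,N)\cong\Hom_H(M,N)\oplus\Ext^1_H(M,\tau^{-1}N)$, together with the combinatorics of tubes recalled in Section $2$, one checks that $\End_\C(M)\simeq k$ holds for every preprojective, every preinjective, and every regular indecomposable $M$ of quasi-length at most $r-2$ in a tube of rank $r$, and fails (with $\dim_k\End_\C(M)=2$) exactly when $M$ is a regular module of quasi-length $r-1$ in a tube of rank $r\geq 2$; for the shifted projectives $P_i[1]$ acyclicity of $Q$ gives $\End_\C(P_i[1])=\End_H(P_i)\simeq k$. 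This is the pointwise content of the equivalence (ii)$\Leftrightarrow$(iii) quoted from~\cite{bmr2}. Thus the assertion reduces to: $M$ is exchange compatible if and only if $M$ is \emph{not} a regular module of quasi-length $r-1$ in a tube of rank $r$.

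Second, I would record the numerical shape of compatibility. Writing $a=\dim\Hom_\C(M,X)$, $a^\ast=\dim\Hom_\C(M,X^\ast)$, $b=\dim\Hom_\C(M,B)$ and $b'=\dim\Hom_\C(M,B')$, and applying $\Hom_\C(M,-)$ to the exchange triangles \eqref{magic1} and \eqref{magic2}, the resulting long exact sequences give $a+a^\ast\geq\max(b,b')$ unconditionally, since each defect $a+a^\ast-b$ and $a+a^\ast-b'$ equals a sum of dimensions of a kernel and an image of a connecting map. By Proposition~\ref{p:comp}(a), equality — that is, compatibility with $(X,X^\ast)$ in the sense of Definition~\ref{d:excomp} — holds precisely when at least one of \eqref{firstsequence}, \eqref{secondsequence} is exact. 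So for each candidate $M$ and each exchange pair with $X,X^\ast\not\simeq\tau M$ the task is to decide whether one of the connecting maps $\Hom_\C(M,X)\to\Ext^1_\C(M,X^\ast)$ or $\Hom_\C(M,X^\ast)\to\Ext^1_\C(M,X)$ vanishes, together with the matching injectivity on the left.

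For the direction $\End_\C(M)\simeq k\Rightarrow$ exchange compatible, I would show that one of the two sequences is always exact. When $M$ does not share a tube with the exchange pair, the relevant groups are controlled by the one-dimensionality of $\Ext^1_\C(X,X^\ast)$ and the $2$-Calabi--Yau duality $\Ext^1_\C(M,-)\cong D\Ext^1_\C(-,M)$, which forces one connecting map to be zero. When $M,X,X^\ast$ lie in a common tube one falls back on the explicit rectangle rule for $\dim\Hom$ between indecomposables of a tube, organised as casework on the quasi-length of $M$; the hypothesis that $M$ is preprojective, preinjective, or regular of quasi-length at most $r-2$ is exactly what prevents the wrap-around in the tube that would otherwise create a discrepancy. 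Finally, for the converse I would exhibit, for $M$ regular of quasi-length $r-1$ in a tube $\T$ of rank $r\geq 2$, a single incompatible exchange pair: choose an almost complete cluster-tilting object whose two complements $X,X^\ast$ both lie in $\T$ and are positioned so that neither is $\tau M$, yet the hook computation yields $a+a^\ast=\max(b,b')+1$, mirroring the defect of Proposition~\ref{p:comp}(b) but without $\tau M$ among the complements; this strict inequality is precisely the simultaneous failure of \eqref{firstsequence} and \eqref{secondsequence} to be exact. I expect the main obstacle to be the shared-tube casework in the forward direction, where one must verify exactness of one sequence for \emph{every} such pair uniformly, closely followed by producing the explicit bad pair in the converse; both hinge on having the tube module combinatorics of Section $2$, and in particular the $\Hom$-dimension (rectangle) formulas including the wrap-around at quasi-length $r-1$, in a sufficiently usable form.
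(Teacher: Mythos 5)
The paper itself offers no proof of this proposition --- it is quoted directly from~\cite{bmr2} --- but its supporting lemmas already show that your plan for the converse direction fails. You propose to certify that a non-$\C$-brick $M$ (regular of quasi-length $r-1$ in a tube $\T$ of rank $r\geq 2$) is not exchange compatible by producing an incompatible exchange pair $(X,X^{\ast})$ with \emph{both} complements lying in $\T$. No such pair can do the job, and in general none exists. In a tube of rank $2$ the only exceptional objects are the quasi-simples $S_1$ and $S_2=\tau S_1$, and $\dim\Ext^1_{\C}(S_1,S_2)=2$ because both $\Ext^1_H(S_1,S_2)$ and $\Ext^1_H(S_2,S_1)$ are nonzero; since the two complements of an almost complete cluster-tilting object always satisfy $\dim\Ext^1_{\C}(X,X^{\ast})=1$ by~\cite{bmrrt}, there are \emph{no} exchange pairs inside a rank-$2$ tube at all, yet the proposition asserts precisely that these quasi-simples fail to be exchange compatible (rank-$2$ tubes occur, e.g., in type $\widetilde{D}_n$). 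In higher rank, exchange pairs contained in a tube arise from exchanges of tilting objects in wings (Lemma~\ref{l:wing-tilt}), and these are compatible with every exceptional $M$ in the tube: when the exchange triangle lies in $\W_{\tau M}$ the functor $\Hom_{\C}(M,\ )$ annihilates it --- this is the paper's own argument in Lemma~\ref{l:tau-wing}(b), carried out there for non-brick $M$ --- and a direct check with the tube $\Hom$-formulas (for instance in rank $3$) shows the remaining within-tube pairs are compatible as well. The incompatible pairs that actually exist are of mixed type: $X$ regular and outside $\W_{\tau M}$, $X^{\ast}$ preprojective, constructed from Bongartz complements; this is exactly what Lemmas~\ref{l:good} and~\ref{l:seq} produce, with defect $a+a^{\ast}-\max(b,b')=1$ in your notation.

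The same blind spot undermines your forward direction. Your dichotomy --- either $M$ shares no tube with the pair, or $M$, $X$, $X^{\ast}$ lie in a common tube --- omits the mixed case in which exactly one complement lies in $\T_M$ and the other is transjective. This is not a bookkeeping omission: by the previous paragraph it is the \emph{only} configuration in which brickness of $M$ is ever needed (pairs in other tubes, pairs with both complements transjective as in Lemma~\ref{l:trans_comp}, and within-tube wing pairs are compatible with every regular exceptional $M$, brick or not). An argument organized by your two cases would therefore ``prove'' that every exceptional $M$ is exchange compatible, which is false; note also that you locate the role of the quasi-length hypothesis in the all-in-one-tube case, where it is in fact not needed. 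Finally, the claim that $2$-Calabi--Yau duality together with $\dim\Ext^1_{\C}(X,X^{\ast})=1$ ``forces one connecting map to be zero'' is an assertion of the conclusion, not a proof: vanishing of the connecting maps is exactly what must be established, and in~\cite{bmr2} (and in the paper's Lemmas~\ref{l:trans_comp} and~\ref{l:seq}) this is done via the $F$-map calculus (a composite of two $F$-maps is zero) and directedness of the transjective component. What is sound in your proposal is the reduction via Lemma~\ref{l:homs}(a) and the numerical reformulation that $\max(b,b')\leq a+a^{\ast}$ holds always, with compatibility equivalent to equality; but the substantive content of both implications is missing.
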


\section{Tame hereditary algebras}
In this section we review some facts about tame hereditary algebras,
cluster categories and cluster algebras.

We fix a connected extended Dynkin quiver $Q$.
The category $\mod kQ$ of finite dimensional modules over the tame hereditary
algebra $H = kQ$ is well understood; see \cite{ring}.
Let $\tau$ denote the Auslander-Reiten translate. 
All indecomposable $kQ$-modules $X$ are either
preprojective, i.e.\ $\tau^m X$ is projective for some $m \geq 0$;
preinjective, i.e.\ $\tau^{-m} X$ is injective for some $m \geq 0$;
or regular, i.e.\ not preprojective or preinjective.

The Auslander-Reiten quiver of $H$ consists of:
\begin{itemize}
\item[(i)] the preprojective component, consisting exactly of the
indecomposable preprojective modules;
\item[(ii)] the preinjective component, consisting exactly of the
indecomposable preinjective modules;
\item[(iii)] a finite number $d$ of regular components called non-homogeneous
(or exceptional) tubes, $\T_1, \dots \T_d$;
\item[(iv)] an infinite set of regular components called homogeneous tubes.
\end{itemize}

For a fixed tube $\T$, there is a number $m$, such that
$\tau^m X = X$ for all indecomposable objects in $\T$. The minimal
such $m$ is the {\em rank} of $\T$. If $m=1$ then $\T$ is said to be
homogeneous.

We will also use the following facts about maps in $\mod H$.
Let $P$ (respectively, $I$ and $R$) be preprojective (respectively,
preinjective and regular) indecomposable modules,
and a $R'$ a regular indecomposable module with $\T_R \neq \T_{R'}$.
Then we have that
$\Hom_H(I,R)= \Hom_H(I,P) = \Hom_H(R,P) = \Hom_H(R,R') = 0$.

\section{The transjective component}

We will call an indecomposable object in the cluster category
{\em transjective} if it is not induced by a regular module. Note that
the transjective objects form a component of the Auslander-Reiten quiver
of $\C$.
One of our aims is to show that every transjective object has a
$T$-denominator for tame hereditary algebras.
In this section, we show that for this it is sufficient to find one
transjective cluster-tilting object all of whose summands have a
$T$-denominator.
Note that the results in this section do not require $H$ to be tame, but
hold for all finite dimensional hereditary algebras.

\begin{rem} \label{r:transjective}
We remark that, given a finite set of indecomposable transjective objects
in the cluster category, we can, by replacing the hereditary algebra $H$
with a derived equivalent hereditary algebra, assume that all of the objects
in the set are preprojective~\cite[3.3]{bmrrt}.
We shall make use of this in what follows.
\end{rem}

We start with the following observation
\begin{lemma}\label{l:AR}
Assume $(X, \tau X)$ is an exchange pair.
\begin{itemize}
\item[(a)] The AR-triangle $\tau X \to E \to X \to$ is an exchange triangle.
\item[(b)] Any exceptional object $M$ is compatible with the
exchange pair $(X,\tau X)$.    
\end{itemize}
\end{lemma}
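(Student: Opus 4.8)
The plan is to establish both parts by relating the Auslander--Reiten triangle in $\C$ to the exchange triangles of the pair $(X,\tau X)$, and then to use the standard $F$-map description of morphisms in the cluster category.

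For part (a), I would argue that the almost split triangle $\tau X \to E \to X \to$ in $\C$ is by definition a non-split triangle whose connecting map $X \to \tau X[1]$ is nonzero; minimality of the AR-triangle means $E \to X$ is a minimal right almost split map. The key point is that $\tau X$ is precisely the other complement of $\overline{T}' = E$ (viewed in $\add$ terms) for the exchange pair, so that $\tau X \to E \to X \to$ coincides with the exchange triangle $(\ref{magic1})$ up to isomorphism. To see this, I would recall from~\cite{bmrrt} that when $(X,\tau X)$ is an exchange pair, the middle term $B$ of the exchange triangle is the minimal right $\add(\overline{T}')$-approximation of $X$; since every map from an exceptional object into $X$ that is not a split epi factors through radical maps, and the minimal such approximation agrees with the source of the minimal right almost split map when $\tau X$ is the mutation partner, the two triangles must coincide. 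Thus the AR-triangle \emph{is} an exchange triangle. This step is essentially bookkeeping identifying the two triangles.

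For part (b), I would verify Definition~\ref{d:excomp} directly. Since $\tau X$ occurs in the exchange pair, the relevant case is the one covered by Proposition~\ref{p:comp}(b): whenever $\tau M \simeq X$ or $\tau M \simeq \tau X$ (the latter meaning $M \simeq X$ after applying $\tau^{-1}$, i.e.\ one of the two objects of the pair equals $\tau M$), compatibility reduces to the equality with a $+1$ correction, which always holds. More carefully, I split into cases: if $M \simeq X$ or $\tau M$ equals one of $X,\tau X$, then by the definition of compatibility $M$ is automatically compatible (the first clause $X \simeq \tau M$ or $X^{\ast} \simeq \tau M$ applies, with $X^{\ast} = \tau X$ here). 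In the remaining case, where neither $X$ nor $\tau X$ is isomorphic to $\tau M$, I must check the numerical identity
$$
\dim\Hom_{\C}(M,\tau X) + \dim\Hom_{\C}(M,X) = \max(\dim\Hom_{\C}(M,E),\,\dim\Hom_{\C}(M,B')),
$$
and by part (a) one of the two exchange triangles has middle term $E$. Applying $\Hom_{\C}(M,-)$ to the AR-triangle $\tau X \to E \to X \to$ gives a long exact sequence, and because the connecting maps here are $F$-maps whose composition is zero, the long exact sequence breaks into the short exact sequence $(\ref{firstsequence})$; exactness then yields the additivity $\dim\Hom(M,E) = \dim\Hom(M,\tau X) + \dim\Hom(M,X)$, which forces the max-equality. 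This is exactly criterion (ii) of Proposition~\ref{p:comp}(a), so $M$ is compatible.

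The main obstacle I anticipate is the identification in part (a): showing rigorously that the minimal right $\add(\overline{T}')$-approximation of $X$ coincides with the middle term of the AR-triangle, rather than merely mapping to it. The crux is that for an AR exchange pair the almost complete object $\overline{T}'$ is forced to be $\add E$, so the approximation and the almost split map have the same source; once that identification is secured, part (b) follows formally from the long exact sequence and the vanishing of $F$-map compositions. I would therefore invest most of the effort in carefully invoking the exchange-triangle construction of~\cite[\S6]{bmrrt} to pin down this coincidence.
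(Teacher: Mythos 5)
Both parts of your plan contain genuine gaps, and in both cases you miss the short argument the paper actually uses.

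For part (a), your identification strategy is circular and rests on a false step. You assert that "the almost complete object $\overline{T}'$ is forced to be $\add E$"; this is backwards. What is true is the inclusion $\add E \subseteq \add\overline{T}'$, and that inclusion is a \emph{consequence} of the lemma (once the AR-triangle is known to be the exchange triangle), not something you can feed into its proof; in general $E$ has far fewer indecomposable summands than $\overline{T}'$, so the two can never be equal as subcategories when $n\geq 3$. The paper's proof is one line and involves no approximations at all: by~\cite{bmrrt}, $\Ext^1_{\C}(X,\tau X)\simeq k$ when $(X,\tau X)$ is an exchange pair, so up to isomorphism there is a \emph{unique} non-split triangle of the form $\tau X\to{?}\to X\to$; since the AR-triangle and the exchange triangle are both non-split with these end terms, they coincide. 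Your proposal never invokes this one-dimensionality, which is the entire content of (a).

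For part (b), your reason for the long exact sequence collapsing to the short exact sequence \eqref{firstsequence} --- ``the connecting maps here are $F$-maps whose composition is zero'' --- is false. After reducing via Remark~\ref{r:transjective} to the case where the AR-triangle is induced by an almost split sequence in $\mod H'$ (a reduction your plan omits), the connecting map $X\to\tau X[1]$ is exactly the extension class of that sequence: it lies in the summand $\Hom_{\D}(X,\tau X[1])=\Ext^1_{H'}(X,\tau X)$ of $\Hom_{\C}(X,\tau X[1])$, i.e.\ it is an $H'$-map, not an $F$-map. Moreover $M$ here is an \emph{arbitrary} exceptional object, so maps $M\to X$ need not be $F$-maps either, and a composition of an $H'$-map with an $F$-map is generally a nonzero $F$-map; only compositions of two $F$-maps vanish. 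You are pattern-matching the proof of Lemma~\ref{l:trans_comp}, where the argument works precisely because the exchange pair is transjective and $M$ is regular, so directedness forces all the relevant maps to be $F$-maps; neither hypothesis is available here. The correct source of exactness is the almost split property itself: every map $M\to X$ with $M\not\simeq X$ indecomposable factors through the right almost split map $E\to X$, giving surjectivity, while injectivity of $\Hom_{\C}(M,\tau X)\to\Hom_{\C}(M,E)$ requires a separate argument on the $F$-components (using $M\not\simeq\tau^{-1}X$); this is exactly what the cited result~\cite[5.1]{bmr2} provides, and it is where the hypotheses $X\not\simeq\tau M$, $\tau X\not\simeq\tau M$ are genuinely used. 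Your explicit treatment of the automatic cases of Definition~\ref{d:excomp} is correct and is indeed needed before applying Proposition~\ref{p:comp}(a), but the numerical identity in the remaining case is not established by your argument.
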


\begin{proof}
Part (a) is well-known. By~\cite[7.5]{bmrrt}, we know that 
$\Ext^1_{\C}(X, \tau X) \simeq k$ when $(X, \tau X)$ is an exchange pair.
Hence, the AR-triangle must be isomorphic to the exchange triangle.

For part (b) we can assume that $\tau X \to E \to X \to$ is induced by an
almost split sequence in $\mod H'$ with $\C_H = \C_{H'}$, by Remark~\ref{r:transjective}.
Then we use Lemma~\cite[5.1]{bmr2} to obtain that
$\Hom_{\C}(M,\ )$ applied to the AR-triangle $\tau X \to E \to X \to$
gives an exact sequence. The claim then follows from Proposition~\ref{p:comp}.
\end{proof}

The following summarizes some facts that will be useful later.

\begin{prop}\label{old}
Let $H$ be a hereditary algebra and $U$ a tilting $H$-module. 
\begin{itemize}
\item[(a)] If $U$ is a tilting module such that $U \not \simeq H$ then there
is an indecomposable direct summand $U_i$ of $U$ which is generated by
$\bar{U}= U/U_i$. 
\item[(b)] Furthermore, if $\bar{U}= U/U_i$ generates $U_i$, and
$B \to U_i$ is the (necessarily surjective) minimal right
$\add \bar{U}$-approximation of $U_i$ and
\begin{equation} \label{e:generatingsequence}
0 \to U^{\ast} \to B \to U_i \to 0
\end{equation}
is the induced exact sequence in $\mod H$ then
the $H$-module $\bar{U} \amalg U^{\ast}$ is a tilting module in $\mod H$.
\item[(c)] The exact sequence~\eqref{e:generatingsequence}
induces an exchange triangle in $\C_H$.
\item[(d)] If $U$ is preprojective, then so is $U^{\ast}$.
\end{itemize}
\end{prop}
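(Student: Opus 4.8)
The plan is to prove the four parts in order, treating (a) and (b) as classical tilting theory, (c) as a translation from the module category to the cluster category, and (d) by a careful analysis of the approximation sequence in the preprojective component.

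For part (a), I would argue as follows. Since $U$ is a tilting module with $U \not\simeq H$, $U$ is not projective, so some indecomposable summand $U_i$ is not projective. I would then invoke the standard fact from tilting theory that for a tilting module, the ordering of summands by ``$U_j$ generates $U_i$'' has the property that any non-projective summand is generated by the sum of the remaining summands. Concretely, I expect to use that $\Fac U = \{Y : \Ext^1_H(U,Y)=0\}$ together with the fact that $U_i \in \Fac U$; the key point is to upgrade ``$U_i$ is generated by $U$'' to ``$U_i$ is generated by $\bar U = U/U_i$.'' This is where I would appeal to the Bongartz-type theory referenced via~\cite{ahk}: a minimal generating map must factor through the other summands because a non-isomorphism from $U_i$ to $U_i$ lands in the radical, so no copy of $U_i$ is needed in a minimal right $\add U$-approximation of $U_i$.

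For parts (b) and (c), the approximation $B \to U_i$ is surjective (since $\bar U$ generates $U_i$), giving the short exact sequence~\eqref{e:generatingsequence}. I would show $\bar U \amalg U^{\ast}$ is a tilting module by verifying it is a partial tilting module with $n$ summands: one checks $\Ext^1_H$ vanishes on $U^{\ast}$ and between $U^{\ast}$ and $\bar U$ using the long exact sequence in $\Ext$ obtained by applying $\Hom_H(-, \bar U \amalg U^{\ast})$ and $\Hom_H(\bar U \amalg U^{\ast}, -)$ to~\eqref{e:generatingsequence}, exploiting $\Ext^1_H(U,U)=0$ and that $H$ is hereditary (so $\Ext^2=0$). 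Since $U^{\ast}$ and $U_i$ are complements over the same almost complete tilting module $\bar U$, part (c) follows because the two complements of an almost complete cluster-tilting object are related by the exchange triangles of~\cite[\S6]{bmrrt}, and~\eqref{e:generatingsequence} is exactly the module-level incarnation of one of these.

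The main obstacle I anticipate is part (d): showing $U^{\ast}$ is again preprojective. Knowing $U$ is preprojective means every summand, including $U_i$ and those in $\bar U$, is preprojective, hence $B$ is preprojective. From~\eqref{e:generatingsequence} we get $U^{\ast} = \Ker(B \to U_i)$, a submodule of a preprojective module, and submodules of preprojectives need not be preprojective in general. So the argument must use more. I would use that $U^{\ast}$ is a summand of a tilting module, hence exceptional, ruling out decomposable regular behaviour, and then argue that $U^{\ast}$ cannot be preinjective or regular: a preinjective or regular indecomposable summand of $U^{\ast}$ would force a nonzero map into the preprojective $B$, contradicting the vanishing $\Hom_H(I,P)=\Hom_H(R,P)=0$ recorded in Section~2. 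This reduces the problem to excluding regular and preinjective constituents of $U^{\ast}$ via the Hom-vanishing facts, which is the step I would write out most carefully.
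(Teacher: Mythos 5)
Your sketches for (b), (c) and (d) are essentially in line with the paper, which disposes of these parts by citation: (b) is the Happel--Unger theorem on complements of almost complete tilting modules, (c) is from~\cite{bmrrt}, and your argument for (d) --- $U^{\ast}$ embeds in $B \in \add\bar{U}$, which is preprojective, and a regular or preinjective indecomposable summand of $U^{\ast}$ would give a nonzero map into a preprojective module, contradicting $\Hom_H(R,P)=\Hom_H(I,P)=0$ --- is exactly the ``obvious'' argument intended there. (For (b) you should also check that $U^{\ast}$ is indecomposable and not isomorphic to any summand of $\bar{U}$, since a tilting module must have $n$ nonisomorphic indecomposable summands; this is part of what Happel--Unger actually prove.) The genuine gap is in part (a). The fact you invoke --- that \emph{every} non-projective indecomposable summand of a tilting module is generated by the remaining summands --- is false. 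Take $H=kQ$ with $Q\colon 1 \leftarrow 2 \rightarrow 3$ and $U = I_1 \amalg I_2 \amalg I_3$ the direct sum of the indecomposable injectives; this is a tilting module since $\Ext^1_H(-,I)=0$ for $I$ injective. The summand $I_1$, of dimension vector $(1,1,0)$, is not projective, yet $\Hom_H(I_2,I_1)=\Hom_H(I_3,I_1)=0$, so $I_1$ is not generated by $\bar{U}=I_2\amalg I_3$ (and symmetrically neither is $I_3$); only $I_2=S_2$ is generated by the other summands. Part (a) is precisely the Riedtmann--Schofield theorem that the paper cites: it is an \emph{existence} statement (some summand has this property when $U\not\simeq H$), and it cannot be localized to an arbitrarily chosen non-projective summand.

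Your supporting argument also breaks down at the decisive step. Since $U_i\in\add U$, the minimal right $\add U$-approximation of $U_i$ is the identity map $U_i\to U_i$, so the observation that $U_i\in\Fac U$ (equivalently $\Ext^1_H(U,U_i)=0$) carries no information whatsoever about generation by $\bar{U}$: nothing forces the $U_i$-components of a surjection $U^m\to U_i$ to be non-isomorphisms, because minimality is taken with respect to $\add U$, which contains $U_i$ itself. This is exactly the distinction between ``$U_i$ is generated by $U$'' (trivially true) and ``$U_i$ is generated by $\bar{U}$'' (true for at least one summand, false in general), and no radical argument bridges it. To prove (a) you genuinely need the Riedtmann--Schofield theorem, or an equivalent global argument (e.g.\ their counting of complements of almost complete tilting modules); this is why the paper handles (a) by citation rather than by a direct verification.
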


\begin{proof}
Part (a) is a theorem of Riedtmann and Schofield~\cite{rs}.
Part (b) is a special case of a theorem by Happel and Unger~\cite{hu}.
Part (c) is contained in~\cite{bmrrt} and part (d) is obvious. 
\end{proof}

The following is also well-known and holds for any finite dimensional
hereditary algebra $H$. Note that for $H$ of finite representation type,
all modules are by definition preprojective.

\begin{lemma}\label{l:move}
For every preprojective tilting module $U$ in $\mod H$ there is
a finite sequence of preprojective tilting modules
$$U= W_0, W_1, W_2, \dots, W_r =H,$$ and exact sequences
\begin{equation}\label{exc}
0 \to  M_j^{\ast} \to B_j \to M_j \to 0
\end{equation} 
with $B_ j\to M_j$ a minimal right $\add W_j/M_j$-approximation
of the indecomposable direct summand $M_j$ of $W_j$, and
$$W_{j+1}=(W_j/M_j) \amalg M_j^{\ast}.$$
\end{lemma}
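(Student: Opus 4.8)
The plan is to induct on the ``distance'' of the preprojective tilting module $U$ from $H$, using Proposition~\ref{old} to produce one step of the sequence at each stage. The key invariant I would track is a measure that strictly decreases as we approach $H$; a natural candidate is $\sum_i \dim \Hom_H(P_i, U)$ or, more robustly, the number of summands of $U$ lying strictly beyond the projectives in the preprojective component, where ``beyond'' is measured by the partial order coming from the AR-quiver (i.e.\ $\tau^{-m}P$ for $m \geq 0$). Since $U$ is preprojective, each indecomposable summand is of the form $\tau^{-m_i} P_{j_i}$ for some $m_i \geq 0$, and $U = H$ exactly when every summand is projective (all $m_i = 0$).

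The inductive step goes as follows. If $U \not\simeq H$, then by Proposition~\ref{old}(a) there is an indecomposable summand $U_i$ of $U$ generated by $\bar U = U/U_i$. Setting $W_0 = U$ and taking $M_0 = U_i$, I form the minimal right $\add \bar U$-approximation $B_0 \to U_i$, which is surjective by the generation hypothesis, giving the exact sequence
\begin{equation*}
0 \to U^{\ast} \to B_0 \to U_i \to 0.
\end{equation*}
By Proposition~\ref{old}(b), the module $W_1 := \bar U \amalg U^{\ast} = (U/U_i) \amalg U^{\ast}$ is again a tilting module, by part~(c) the sequence induces an exchange triangle in $\C_H$, and crucially by part~(d), since $U$ is preprojective, so is $U^{\ast}$, hence $W_1$ is a preprojective tilting module. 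This is exactly one step of the required sequence~\eqref{exc}, and I then apply the inductive hypothesis to $W_1$.

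The main obstacle is verifying that the process terminates, i.e.\ that the chosen invariant genuinely decreases and that we actually reach $H$ rather than cycling or stalling among non-projective tilting modules. The delicate point is that Proposition~\ref{old}(a) only guarantees \emph{some} summand is generated by the rest, and the replacement $U_i \mapsto U^{\ast}$ must be shown to move the tilting module ``closer'' to $H$ in the preprojective order. I would argue this by observing that the approximation sequence forces $U^{\ast}$ to sit strictly earlier in the AR-quiver than the summand $U_i$ it replaces — intuitively, $U^{\ast}$ is a submodule appearing in a surjection onto $U_i$ from modules in $\add \bar U$, so its position relative to the projectives decreases. Making this precise is the one step requiring care; once the strictly decreasing nonnegative integer invariant is in hand, finiteness of the sequence and termination at $W_r = H$ follow immediately, since the only preprojective tilting module with minimal invariant is $H$ itself (any tilting module all of whose summands are projective must be $H$, as $H$ is the unique basic tilting module consisting only of projectives).
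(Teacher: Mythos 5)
Your inductive step is exactly the paper's: apply Proposition~\ref{old} to replace a summand $U_i$ generated by $\bar U=U/U_i$ with the kernel $U^{\ast}$ of its minimal right $\add \bar U$-approximation, and note via Proposition~\ref{old}(d) that one stays among preprojective tilting modules. The genuine gap is in your termination argument: neither of your proposed invariants is strictly decreasing, so the induction as set up does not get off the ground. For $\sum_i \dim\Hom_H(P_i,U)$ (the total dimension of $U$): already for $Q$ of type $A_2$ ($1\to 2$), exchanging $S_1$ out of $U=P_1\amalg S_1$ uses the sequence $0\to P_2\to P_1\to S_1\to 0$ and replaces the one-dimensional $S_1$ by the one-dimensional $P_2$, so the total dimension is unchanged. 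For the count of summands ``strictly beyond the projectives'': for $Q$ of type $A_3$ ($1\to 2\to 3$), exchanging the simple injective $I_1$ out of the tilting module $I_1\amalg I_2\amalg I_3$ uses $0\to S_2\to I_2\to I_1\to 0$, replacing the non-projective $I_1$ by the non-projective $S_2$, so the count stays at $2$. You yourself flag the needed assertion (that $U^{\ast}$ strictly precedes $U_i$) as ``the one step requiring care'' and leave it unproved; that assertion, not either of your candidate counts, is the actual content of the lemma.

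The missing step is what the paper's proof supplies, and it is short. Since the preprojective component is directed, the relation generated by $X\preceq Y$ whenever $\Hom_H(X,Y)\neq 0$ is a partial order on the preprojective indecomposables. Because $B_j\to M_j$ is right minimal, its restriction to every indecomposable summand of $B_j$ is nonzero, and $M_j^{\ast}\to B_j$ is nonzero into some summand; hence $M_j^{\ast}\preceq M_j$, and strictly so, since $M_j^{\ast}\not\simeq M_j$ (the exchange sequence does not split, as $M_j$ is rigid) and directedness forbids cycles. Termination then follows because each preprojective indecomposable has only finitely many predecessors in this order: for instance, the sum over the indecomposable summands of $W_j$ of the number of their predecessors is a nonnegative integer which strictly drops at each exchange, since $M_j^{\ast}\precneqq M_j$ forces the predecessor set of $M_j^{\ast}$ to be a proper subset of that of $M_j$. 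Your two candidates lack exactly this property. Granted such an invariant, your concluding remarks are fine: Proposition~\ref{old}(a) applies as long as $W_j\not\simeq H$, so the process can only halt at $W_r=H$.
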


\begin{proof}
We use the fact that the preprojective component is directed, so
there is an induced partial order on the indecomposable modules,
generated by $X \preceq Y$ if $\Hom(X,Y) \neq 0$.
For the above exchange sequences we have $M_j^{\ast} \precneqq M_j$.
The result now follows directly from Proposition~\ref{old}.
\end{proof}

Next we consider transjective exchange pairs.

\begin{lemma}\label{l:trans_comp}
Let $(X,X^{\ast})$ be an exchange pair, where both $X$ and $X^{\ast}$ are
transjective. Then any regular indecomposable exceptional $M$ is compatible
with $(X,X^{\ast})$.
\end{lemma}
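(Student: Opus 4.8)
The plan is to reach compatibility through Proposition~\ref{p:comp}(a). Since $M$ is regular, the object $\tau M$ is again regular (it lies in the same tube), whereas $X$ and $X^{\ast}$ are transjective; hence neither $X$ nor $X^{\ast}$ is isomorphic to $\tau M$, and it suffices to prove that one of the two sequences \eqref{firstsequence}, \eqref{secondsequence} is exact.

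First I would reduce to a module-theoretic situation. Applying Remark~\ref{r:transjective} to the finite set consisting of $X$, $X^{\ast}$ and the indecomposable summands of $B$ and $B'$, I may replace $H$ by a derived equivalent hereditary algebra $H'$ with $\C_H=\C_{H'}$ in which all of these objects are preprojective modules. The object $M$ lies in a tube, which is a regular component of the AR-quiver of $\C$ unaffected by this replacement, so $M$ and $\tau M$ remain regular over $H'$, and I retain the vanishing $\Hom_{H'}(R,P)=0$ for $R$ regular and $P$ preprojective recorded in Section~2. Next, since $X$ and $X^{\ast}$ are the two complements of an almost complete tilting object $\overline{T}'$ consisting of preprojective modules, Proposition~\ref{old} shows that, after interchanging $X$ and $X^{\ast}$ if necessary, one of the exchange triangles, say \eqref{magic1}, is induced by a short exact sequence of preprojective modules $0\to X^{\ast}\to B\to X\to 0$ in $\mod H'$.

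The key computation is then as follows. For modules $N$ I use the decomposition $\Hom_{\C}(M,N)=\Hom_{H'}(M,N)\oplus \Hom_{D^b(H')}(M,\tau^{-1}N[1])$, whose second summand is the space of $F$-maps and equals $\Ext^1_{H'}(\tau M,N)$. Because $M$ and $\tau M$ are regular while $X^{\ast}$, $B$ and $X$ are preprojective, every ordinary $\Hom_{H'}$-term vanishes. Consequently the long exact sequence obtained by applying $\Ext^1_{H'}(\tau M,-)$ to the short exact sequence above collapses, its $\Hom_{H'}(\tau M,-)$-terms being zero, to the exact sequence $0\to \Ext^1_{H'}(\tau M,X^{\ast})\to \Ext^1_{H'}(\tau M,B)\to \Ext^1_{H'}(\tau M,X)\to 0$. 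Since the decomposition of $\Hom_{\C}(M,-)$ is natural in the module $N$, this is identified with \eqref{firstsequence}, which is therefore exact, so $M$ is compatible with $(X,X^{\ast})$ by Proposition~\ref{p:comp}(a).

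The main obstacle I expect is the reduction step rather than the final computation: I must verify carefully that passing to $H'$ does not disturb the regularity of $M$ (so that the crucial vanishing of the $\Hom_{H'}$-terms persists), and that one of the two exchange triangles is genuinely realized by a short exact sequence of preprojective modules, so that functoriality in $N$ matches the resulting exact $\Ext$-sequence with \eqref{firstsequence}. Once these points are secured, the short exact-sequence argument above completes the proof.
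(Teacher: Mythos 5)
Your core computation is correct, and it takes a genuinely different route from the paper's at the key step. The paper makes the same initial reduction (choose $H'$ derived equivalent to $H$ so that $X$ and $X^{\ast}$ are preprojective $H'$-modules, and realize one exchange triangle by a short exact sequence $0\to X^{\ast}\to B\to X\to 0$ of preprojective modules), but it then proves exactness of \eqref{firstsequence} by direct factorization arguments: every map $M\to X$ and the connecting map $X\to\tau X^{\ast}$ are $F'$-maps, and a composition of two $F'$-maps is zero, giving surjectivity; injectivity comes from factoring a map $M\to X^{\ast}$ killed by $X^{\ast}\to B$ through $\tau^{-1}X\to X^{\ast}$ and again composing two $F'$-maps. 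You instead identify $\Hom_{\C}(M,N)\simeq\Ext^1_{H'}(\tau M,N)$ for preprojective modules $N$ (legitimately: the ordinary $\Hom_{H'}$-part vanishes, and the identification is natural for $H'$-module maps $N\to N'$, since composing an $F$-map with an $H'$-map stays in the $F$-part), and then read off the entire short exact sequence from the long exact $\Ext$-sequence, using $\Hom_{H'}(\tau M,X)=0$ and $\Ext^2_{H'}=0$. This trades the paper's ``two $F$-maps compose to zero'' trick for standard hereditary homological algebra, and is arguably cleaner; your observation that $M$ stays regular over $H'$ (tubes are untouched by the derived equivalence) is also correct and is implicitly used by the paper too.

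There is, however, a genuine flaw in your justification of the reduction step. You assert that $\overline{T}'$ ``consists of preprojective modules''; this is not given and is false in general: even when both complements $X,X^{\ast}$ are transjective, $\overline{T}'$ may contain regular summands (or summands $P_i[1]$), and Remark~\ref{r:transjective} only applies to a finite set of \emph{transjective} objects, so you cannot feed the summands of $B$ and $B'$ into it either. Consequently Proposition~\ref{old}, which concerns a tilting module with a summand generated by the complementary summands, does not apply as you invoke it. The fact you need is nonetheless true, and is exactly what the paper cites from~\cite{bmrrt}: since $X$ and $X^{\ast}$ are modules over $H'$, one has $k\simeq\Ext^1_{\C}(X,X^{\ast})\simeq\Ext^1_{H'}(X,X^{\ast})\oplus D\Ext^1_{H'}(X^{\ast},X)$, so exactly one of the two module $\Ext$-groups is nonzero; the corresponding nonsplit short exact sequence induces a nonsplit triangle in $\C$, which by one-dimensionality of $\Ext^1_{\C}$ must coincide with one of the two exchange triangles. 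Its middle term is then automatically preprojective: any regular or preinjective summand of it would map to zero in $X$, hence embed in $X^{\ast}$, contradicting $\Hom_{H'}(R,P)=0$ for $R$ regular or preinjective and $P$ preprojective. With that replacement (and noting that Proposition~\ref{p:comp}(a) lets you use whichever of \eqref{firstsequence}, \eqref{secondsequence} corresponds to the triangle realized by the short exact sequence), your proof goes through.
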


\begin{proof}
We choose a hereditary algebra $H'$ derived equivalent to $H$ such 
that both $X$ and $X^{\ast}$ correspond to preprojective $H'$-modules
(see Remark~\ref{r:transjective}).
Hence one of the exchange triangles, say 
$$X^{\ast} \to B \to X \to$$
is induced by a short exact sequence, by~\cite{bmrrt}.
It is clear that the middle term $B$ is also induced by a preprojective
module. Note that we have $\C_H \simeq \C_{H'}$.

We want to show that we get a short exact sequence
\begin{equation}\label{ses}
0 \to \Hom_{\C}(M,X^{\ast}) \to \Hom_{\C}(M,B) \to \Hom_{\C}(M,X) \to 0.
\end{equation}

Since there is a path of $H'$-maps from $X^{\ast}$ to $X$ in the
preprojective component of $H'$, and this component is directed,
we have that there is no $H'$-map $X \to \tau X^{\ast}$.
Hence the nonzero map $X \to \tau X^{\ast}$ induced from the exchange
triangle is an $F' = F_{H'}$-map. Any map $M \to X$ is also
an $F'$-map, using that there are no $H'$-maps from regular objects
to preprojective objects. But any composition of
two $F'$-maps is zero. Hence every map $M \to X$ will factor through
$B \to X$, so the sequence \eqref{ses} is right exact.

Assume there is a map $M \to X^{\ast}$. Then this map must be an $F'$-map.
Assume the composition $M \to X^{\ast} \to B$ is zero, so that  
$M \to X^{\ast}$ factors through $\tau^{-1} X \to X^{\ast}$. 
$$
\xymatrix{
                  & M \ar[d] \ar[dl] &          & \\
\tau^{-1}X \ar[r] & X^{\ast} \ar[r]  & B \ar[r] & X 
}
$$
Then both maps $M \to \tau^{-1}X$ and $\tau^{-1}X \to X^{\ast}$ are $F'$-maps,
and hence the composition is zero. Hence the map $M \to X^{\ast}$ is zero,
and we have shown left-exactness of \eqref{ses}. 
This finishes the proof by Proposition \ref{p:comp}.
\end{proof}

A {\em slice} in $\mod H$ (see~\cite{ring}), is a tilting module $V$ with a
hereditary endomorphism ring. Note that $\End_{\C}(V)$ is hereditary
if and only if $\End_{H}(V)$ is hereditary by~\cite{abs1}.

\begin{lemma}\label{l:knitting}
Assume there is a slice $V = \amalg_i V_i$ such that each indecomposable
direct summand $V_i$ has a $T$-denominator. Then every transjective
indecomposable object has a $T$-denominator. 
\end{lemma}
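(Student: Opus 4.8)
The plan is to propagate the $T$-denominator property from the summands of the slice $V$ to all transjective objects by repeatedly applying the exchange/mutation machinery already assembled, using the compatibility results of Section $3$. The starting point is that a slice $V=\amalg_i V_i$ is in particular a tilting module, hence induces a transjective cluster-tilting object in $\C$, and by hypothesis each $V_i$ has a $T$-denominator. My goal is to show that every other transjective indecomposable $X$ arises from $V$ by a finite sequence of mutations through transjective cluster-tilting objects, and that the $T$-denominator property is preserved at each mutation step.

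First I would reduce to the preprojective situation. By Remark~\ref{r:transjective}, given any finite collection of transjective objects we may replace $H$ by a derived equivalent hereditary algebra $H'$ so that all the relevant objects (the summands of $V$, the target $X$, and the intermediate complements) correspond to preprojective $H'$-modules, with $\C_H\simeq\C_{H'}$; since a slice has hereditary endomorphism ring, $V$ remains a slice in $\mod H'$. Then I would invoke Lemma~\ref{l:move}: for the preprojective tilting module $V$ there is a finite chain $V=W_0,W_1,\dots,W_r=H$ of preprojective tilting modules, each obtained from the previous by the exchange sequence~\eqref{exc}, and by Proposition~\ref{old}(d) every complement produced along the way is again preprojective, hence transjective. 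This exhibits a path of mutations, through transjective cluster-tilting objects, connecting $V$ to the canonical tilting object. More generally, since any given transjective indecomposable $X$ can be placed in some preprojective tilting module after a suitable derived equivalence, I can reach $X$ by such a chain.

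The engine for preserving the denominator property is Proposition~\ref{p:comp}(c): if every summand $T_i$ of the reference object $T$ is compatible with the exchange pair $(X,X^{\ast})$ occurring at a given mutation, then the new cluster variable again has a $T$-denominator. So at each step of the chain the exchange pair $(M_j,M_j^{\ast})$ from~\eqref{exc} is a \emph{transjective} exchange pair, and I must check compatibility with every summand $T_i$ of $T$. This is exactly where the Section~$3$ compatibility lemmas do the work. The summands $T_i$ split into two kinds: the transjective ones and the regular ones. For a regular indecomposable exceptional summand, Lemma~\ref{l:trans_comp} guarantees compatibility with any transjective exchange pair. For a transjective summand $T_i$, I would arrange the derived equivalence so that $T_i$ too is preprojective and use Lemma~\ref{l:AR} together with the directedness of the preprojective component to confirm compatibility (or, more uniformly, observe that exchange-compatibility of transjective exceptional objects follows from the same $F$-map vanishing argument used in Lemma~\ref{l:trans_comp}). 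Thus every summand of $T$ is compatible with each exchange pair in the chain, and the $T$-denominator property passes from $W_j$ to $W_{j+1}$.

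Carrying this out inductively along the chain from $V$ to $X$ yields that $X$ has a $T$-denominator, completing the proof. The main obstacle I anticipate is bookkeeping the derived equivalences: Remark~\ref{r:transjective} lets me make any \emph{finite} set of transjective objects preprojective, so I must fix at the outset the finite set consisting of the summands of $V$, the target $X$, all intermediate complements in the chain, and the transjective summands of $T$, and then perform a single derived equivalence making all of them preprojective simultaneously. Once that is arranged, the compatibility checks are local at each mutation and follow directly from Lemmas~\ref{l:AR} and~\ref{l:trans_comp}, so the remaining difficulty is purely organizational rather than conceptual.
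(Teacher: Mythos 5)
There is a genuine gap at the central step of your argument: the compatibility of the \emph{transjective} summands $T_i$ of $T$ with the exchange pairs $(M_j,M_j^{\ast})$ produced by Lemma~\ref{l:move}. Neither of the two justifications you offer works. Lemma~\ref{l:AR} applies only to exchange pairs of the form $(X,\tau X)$, and the pairs arising from the chain in Lemma~\ref{l:move} are in general not AR pairs, so "Lemma~\ref{l:AR} together with directedness" does not apply to them. The $F$-map vanishing argument of Lemma~\ref{l:trans_comp} does not transfer either: it rests on the fact that there are no $H'$-module maps from a regular module to a preprojective one, so that \emph{every} map $M\to X$ in $\C$ is an $F'$-map and composites of two $F'$-maps vanish. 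When $M=T_i$ is itself preprojective there are honest module maps $T_i\to X$; composing such a map with the connecting map of the triangle induced by $0\to X^{\ast}\to B\to X\to 0$ gives a genuine element of $\Ext^1_{H'}(T_i,X^{\ast})$, which need not vanish, and neither of the two sequences in Proposition~\ref{p:comp}(a) is forced to be exact. In the tame case one can repair this by showing that every transjective indecomposable is a $\C$-brick (using directedness, $\Hom_{H'}(M,\tau^2M)=0$) and then invoking Proposition~\ref{p:tamecompatible}; but your proposal does not do this, and the paper explicitly states that the results of Section 3 hold for arbitrary hereditary $H$, where Proposition~\ref{p:tamecompatible} is unavailable. (A smaller organizational issue: Lemma~\ref{l:move} gives a chain from $V$ down to $H'$, not a chain passing through a tilting object containing your target $X$; you would need a second chain from a preprojective slice through $X$ down to $H'$, run backwards.)

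For contrast, the paper's proof avoids all of this by knitting with AR exchange pairs only: starting from the slice $V$, one replaces a sink (resp.\ source) summand $V_i$ by $\tau V_i$ (resp.\ $\tau^{-1}V_i$), obtaining a new slice; by Lemma~\ref{l:AR}(a) the AR-triangle is the exchange triangle, and by Lemma~\ref{l:AR}(b) \emph{every} exceptional object --- in particular every summand of $T$, transjective or regular, brick or not --- is automatically compatible with such a pair, so Proposition~\ref{p:comp}(c) propagates the $T$-denominator property. Since a slice meets every $\tau$-orbit of the transjective component, iterating reaches every transjective indecomposable, with no case distinction on the summands of $T$ and no tameness assumption. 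The moral is that the mutations along which you propagate must be chosen so that compatibility is free; AR exchange pairs have this property, while the pairs coming from Lemma~\ref{l:move} (which the paper uses only in Lemma~\ref{l:getting_back}, in the opposite direction) do not.
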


\begin{proof}
This follows from combining Lemma~\ref{l:AR} with Proposition \ref{p:comp}. 
\end{proof}

\begin{lemma}\label{l:getting_back}
Assume there is a transjective cluster-tilting object $U = \amalg_i U_i$
such that each indecomposable direct summand $U_i$ has a
$T$-denominator. Then there is a slice $V = \amalg_i V_i$
such that each indecomposable direct summand $V_i$ has a $T$-denominator.
\end{lemma}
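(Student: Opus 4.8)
The plan is to transport the $T$-denominator property along a sequence of tilting mutations ending at a slice, using Proposition~\ref{p:comp}(c) as the inductive engine. First I would invoke Remark~\ref{r:transjective}: since $U$ has only finitely many summands and $T$ has only finitely many transjective summands, I may replace $H$ by a derived equivalent hereditary algebra $H'$ (with $\C_H=\C_{H'}$) so that \emph{all} indecomposable summands of $U$ and all transjective summands of $T$ become preprojective $H'$-modules. As $\Ext^1_{\C}(U,U)=0$ forces $\Ext^1_{H'}(U,U)=0$ and $U$ has $n$ nonisomorphic summands, $U$ is then a preprojective tilting $H'$-module. The target slice will be $V=H'=\amalg_i P_i'$, the sum of the indecomposable projectives: its endomorphism ring is $H'$, which is hereditary, so $\End_{\C}(V)$ is hereditary by~\cite{abs1} and $V$ is a slice.

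Next I would apply Lemma~\ref{l:move} to $U$, producing a chain of preprojective tilting modules $U=W_0,W_1,\dots,W_r=H'$ together with the exchange sequences~\eqref{exc}, each inducing an exchange triangle in $\C$ by Proposition~\ref{old}(c); write $(M_j,M_j^{\ast})$ for the resulting exchange pair, both terms being preprojective over $H'$ and hence transjective. I would then prove by induction on $j$ that every indecomposable summand of $W_j$ has a $T$-denominator. The base case $W_0=U$ is the hypothesis. For the inductive step, the summands of $W_j$ are the images $\alpha(x_i')$ of the cluster variables of a seed, all of which have $T$-denominators by induction; mutating at $M_j$ replaces it by $M_j^{\ast}$, and Proposition~\ref{p:comp}(c) guarantees that $M_j^{\ast}$ again has a $T$-denominator \emph{provided every summand $T_i$ of $T$ is compatible with $(M_j,M_j^{\ast})$}. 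Since the summands of $W_j/M_j$ are unchanged, $W_{j+1}$ inherits the property, and $W_r=H'=V$ yields the desired slice.

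It remains to verify that each $T_i$ is compatible with the transjective exchange pair $(M_j,M_j^{\ast})$, and this is the main obstacle. When $T_i$ is regular this is exactly Lemma~\ref{l:trans_comp}, since $(M_j,M_j^{\ast})$ is a transjective exchange pair. The hard case is a transjective $T_i$: here the $F$-map argument of Lemma~\ref{l:trans_comp} breaks down, because maps $T_i\to M_j$ between preprojective modules need not be $F'$-maps. To handle it I would instead show that transjective indecomposables are exchange compatible, hence in particular compatible with $(M_j,M_j^{\ast})$. In the tame case this is immediate from Proposition~\ref{p:tamecompatible} together with the fact that a transjective indecomposable $T_i$ is a brick with no self-extensions, so $\End_{\C}(T_i)\simeq k$; for general $H$ I would establish $\End_{\C}(T_i)\simeq k$ directly from directedness of the transjective component, which forces $\Hom_{H'}(\tau^{-1}T_i,\tau T_i)=0$ and hence $\Ext^1_{H'}(T_i,\tau^{-1}T_i)=0$ by the Auslander-Reiten formula, and then argue exchange compatibility along the same directedness lines.

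Granting this compatibility, every $T_i$ is compatible with each pair $(M_j,M_j^{\ast})$, the induction in the second paragraph goes through unchanged, and $V=H'$ is a slice all of whose summands have a $T$-denominator. The only genuinely delicate point is the transjective-$T_i$ compatibility; everything else reduces to the cited lemmas.
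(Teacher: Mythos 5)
Your proposal is correct and takes essentially the same route as the paper's proof: pass to a derived equivalent hereditary algebra $H'$ (Remark~\ref{r:transjective}) so that $U$ becomes a preprojective tilting module, descend to the slice $H'$ along the chain of Lemma~\ref{l:move} and Proposition~\ref{old}, and propagate $T$-denominators through the resulting transjective exchange pairs via Proposition~\ref{p:comp}(c) together with Lemma~\ref{l:trans_comp}. The ``main obstacle'' you isolate --- compatibility of the \emph{transjective} summands of $T$ with these exchange pairs --- is precisely the point the paper's proof leaves implicit (it cites only Lemma~\ref{l:trans_comp} and Proposition~\ref{p:comp}); your resolution via directedness giving $\End_{\C}(T_i)\simeq k$ and then Proposition~\ref{p:tamecompatible} is the right one in the tame setting that the main theorem actually requires, while for general hereditary $H$ one would need the result of~\cite{bmr2} that transjective exceptional objects are exchange compatible, which your brief ``directedness'' sketch does not by itself establish (but the paper's own proof is no more explicit on this point).
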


\begin{proof}
We choose a hereditary algebra $H'$ derived equivalent to $H$, so that 
all the $U_i$ are preprojective modules in $\mod H'$ and hence
$U$ is a preprojective tilting module in $\mod H'$
(see Remark~\ref{r:transjective})

It is clear that each $W_j$ in Lemma~\ref{l:move} is a cluster-tilting
object in $\C_H$, and that the object $H'$ forms a slice in $\C_H$.  
Also it is is clear that the short exact sequences \eqref{exc}
are exchange triangles in $\C_H = \C_{H'}$, with transjective end-terms.
So the claim follows from 
Propositions~\ref{p:comp} and~\ref{p:tamecompatible}
and Lemma~\ref{l:trans_comp}.
\end{proof}

We can now state the main result of this section.

\begin{prop}\label{p:one_enough}
Assume that there is a transjective cluster-tilting object $U = \amalg_i U_i$
such that each indecomposable direct summand $U_i$ has a $T$-denominator.
Then every transjective indecomposable object has a $T$-denominator. 
\end{prop}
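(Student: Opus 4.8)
The plan is to chain together the lemmas just proved in this section, which were evidently designed for exactly this purpose. The hypothesis gives me a transjective cluster-tilting object $U = \amalg_i U_i$ all of whose indecomposable summands have a $T$-denominator. My goal is to conclude that \emph{every} transjective indecomposable object has a $T$-denominator. The natural strategy is to pass from the arbitrary transjective cluster-tilting object $U$ to the canonical slice given by $H'$ (after a derived equivalence), and then to ``knit'' outward from that slice to reach all transjective objects.

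First I would invoke Lemma~\ref{l:getting_back}: from the transjective cluster-tilting object $U$ whose summands all have $T$-denominators, it produces a slice $V = \amalg_i V_i$ each of whose indecomposable summands $V_i$ has a $T$-denominator. This is the key reduction step, and it is where the real content lives --- internally it uses Remark~\ref{r:transjective} to replace $H$ by a derived-equivalent $H'$ in which $U$ is a preprojective tilting module, then walks $U$ back to the standard slice $H'$ via the sequence of tilting mutations in Lemma~\ref{l:move}, using Lemma~\ref{l:trans_comp} and Proposition~\ref{p:comp}(c) to propagate the $T$-denominator property along each exchange triangle with transjective end-terms.

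Having obtained such a slice $V$, I would then apply Lemma~\ref{l:knitting}, which states precisely that if there is a slice all of whose indecomposable summands have a $T$-denominator, then every transjective indecomposable object has a $T$-denominator. That lemma in turn rests on Lemma~\ref{l:AR} (every exceptional object is compatible with an AR-exchange pair $(X,\tau X)$) together with Proposition~\ref{p:comp}(c), which lets one knit the $T$-denominator property through AR-triangles across the entire transjective component. Composing these two applications gives the conclusion immediately.

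The main obstacle, in the sense of where the argument could be subtle, is not in the final splicing but in being sure the two black-box lemmas genuinely interlock: Lemma~\ref{l:getting_back} outputs exactly the hypothesis that Lemma~\ref{l:knitting} consumes, namely a slice with all summands having $T$-denominators, so the proof is essentially just ``apply Lemma~\ref{l:getting_back}, then apply Lemma~\ref{l:knitting}.'' I would simply verify that the derived-equivalence bookkeeping is consistent --- that the $T$-denominator property is defined intrinsically in terms of $\Hom_{\C}$-dimensions and therefore survives the identification $\C_H \simeq \C_{H'}$ --- and that $V$ ranges over genuinely all transjective objects via the knitting procedure. I expect the proof to be very short, amounting to citing these two lemmas in sequence.
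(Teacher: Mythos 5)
Your proposal matches the paper's proof exactly: the paper proves this proposition by combining Lemma~\ref{l:getting_back} (which produces a slice with $T$-denominators from the given transjective cluster-tilting object) with Lemma~\ref{l:knitting} (which knits the $T$-denominator property from that slice to all transjective indecomposables). Your additional commentary on the internal mechanics of the two lemmas is accurate but not needed for the proof itself.
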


\begin{proof}
This follows directly from combining Lemmas~\ref{l:knitting}
and~\ref{l:getting_back}.
\end{proof}

\section{Wings}
For this section assume that $H$ is a tame hereditary algebra.
We state some properties and results concerning regular objects in the cluster
category of $H$.

Recall that a module $M$ over an algebra $A$ is known as a \emph{brick} if
it is exceptional and $\End_A(M)=k$. In fact, it is known that if $A$ is
hereditary, every exceptional $A$-module is a brick.
We say that an object $M$ in the cluster category $\C$ is a \emph{$\C$-brick}
if $M$ is exceptional with $\End_{\C}(M)=k$.  

For an indecomposable exceptional regular module $M$, we can consider 
the full subcategory $\W_{M}$ of the abelian category $\T_M$, where the
objects are all subfactors of $M$ formed inside $\T_M$.
This is called the {\em wing} of $M$. The indecomposable objects in
$\W_{M}$ form a full subquiver of the AR-quiver shaped as a triangle
with vertices given by the unique quasi-simple with a non-zero map to $M$,
the unique quasi-simple which $M$ has a non-zero map to, and $M$ itself.    

Suppose that $\ql M = t$. 
We consider $\W_{M}$ as an abelian category equivalent to $\mod \Lambda_t$,
where $\Lambda_t$ is the hereditary algebra given as the path algebra of a
quiver of Dynkin type $A_t$, with linear orientation; see \cite{ring}.
The module $M$ is a projective and injective object in $\W_{M}$, and a
tilting object in $\W_M$ has exactly $t$ indecomposable
direct summands. 

The following lemma summarizes some well-known facts,
including the fact that there are bricks in the cluster category of $H$ which
are not $\C$-bricks.

\begin{lemma}\label{l:homs}
Let $M,N$ be regular exceptional indecomposable modules 
in a tube $\T$ of rank $t+1>1$ in $\C_H$.
\begin{itemize}
\item[(a)]~\cite{bmr2}
The object $M$ is not a $\C$-brick if and only if $\ql M = t$
\item[(b)] Any cluster-tilting object in $\C_H$ contains at most one object
from each tube which is not a $\C$-brick.
\item[(c)] If $\ql M = t$ then the following are equivalent:
  \begin{itemize}
    \item[(i)]   $\Hom_H(M,N) \neq 0$ 
    \item[(ii)]  $\dim \Hom_H(M,N) = 1$
    \item[(iii)] $\dim \Hom_{\C}(M,N) = 2$
    \item[(iv)]  $N \not \in \W_{\tau M}$  
  \end{itemize}
\end{itemize}
\end{lemma}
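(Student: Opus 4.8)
The plan is to prove Lemma~\ref{l:homs} part by part, using the well-understood structure of a tube of rank $t>1$ as a category whose indecomposable objects are indexed by a quasi-socle and a quasi-length, with the irreducible maps running along and up the tube. Throughout I would fix the tube $\T$ of rank $t$ and recall the standard fact that for regular indecomposables $M,N$ in $\T$, the dimension of $\Hom_H(M,N)$ is controlled by how the wing of $N$ overlaps the ray through $M$; in particular $\Hom_H(M,N)$ is at most one-dimensional precisely when $M$ is a $\C$-brick-exceptional configuration, and can jump when $\ql M$ is maximal among exceptionals, i.e.\ $\ql M = t-1$.

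For part (a), I would simply invoke the cited result from~\cite{bmr2}: an exceptional regular indecomposable $M$ of quasi-length $\ell$ in a tube of rank $t$ fails to be a $\C$-brick exactly when $\ell = t-1$. The underlying computation is that $\End_{\C}(M) = \End_H(M) \oplus \Hom_H(M,\tau M)$ (using the cluster-category $\Hom$ formula $\Hom_{\C}(M,M)\cong \Hom_H(M,M)\oplus \Ext^1_H(M,M)$ together with AR-duality identifying $\Ext^1_H(M,M)$ with a dual of $\Hom_H(M,\tau M)$), and $\Hom_H(M,\tau M)\neq 0$ iff $\ql M = t-1$, since only then does the wing of $\tau M$ reach far enough to meet $M$. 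For part (b), I would argue that two exceptional summands $M, M'$ in the same tube with $\ql M = \ql M' = t-1$ cannot be $\Ext$-orthogonal in $\C$: by the overlap analysis either $\Ext^1_{\C}(M,M')\neq 0$ or $M\cong M'$, contradicting that a cluster-tilting object is a basic $\C$-rigid object. Hence at most one non-$\C$-brick object per tube occurs.

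The substance is part (c), and here the plan is to prove the cycle of implications. The equivalence (i)$\Leftrightarrow$(ii) is immediate once one knows that for $M$ with $\ql M = t-1$ and $N$ exceptional regular in $\T$, any nonzero $H$-map is forced to have one-dimensional image-and-domain structure: exceptional modules are bricks and the relevant $\Hom$ between a nearly-maximal-quasilength object and another exceptional can only be $0$ or $1$ dimensional, by inspecting the composition factors along the wing. The key bridge is (ii)$\Leftrightarrow$(iii): I would compute $\dim\Hom_{\C}(M,N) = \dim\Hom_H(M,N) + \dim\Ext^1_H(M,N)$, and show that when $\ql M = t-1$ one has $\dim\Ext^1_H(M,N) = \dim\Hom_H(M,N)$, so that the cluster-category dimension is exactly twice the module-category dimension; combined with (i)$\Leftrightarrow$(ii) this gives $\dim\Hom_{\C}(M,N)=2$. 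The extra $\Ext^1$ contribution is precisely the failure of $M$ to be a $\C$-brick propagating to the pair $(M,N)$, via AR-duality $\Ext^1_H(M,N)\cong D\Hom_H(N,\tau M)$ and the fact that $\tau M$ has quasi-length $t-1$ as well.

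Finally (i)$\Leftrightarrow$(iv) is the geometric heart: I would translate $\Hom_H(M,N)\neq 0$ into a statement about the position of $N$ relative to $\tau M$ in the tube. Since $\ql M = t-1$, the wing $\W_{\tau M}$ consists of exactly those exceptional regulars that sit below the top of $\tau M$'s triangle, and the standard description of maps in a tube says $\Hom_H(M,N)\neq 0$ iff the ray starting at the quasi-socle of $M$ meets the coray ending at $N$ within the allowed quasi-length window, which happens exactly when $N\notin\W_{\tau M}$. The main obstacle I anticipate is getting the wing bookkeeping exactly right: one must carefully track the rank-$t$ periodicity and the off-by-one coming from $t-1$ versus $t$, and verify that the $\C$-$\Hom$ receives contributions from both the module $\Hom$ and the $\Ext^1$ (equivalently the $F$-map part) in exactly the claimed way. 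I would guard against sign/index errors by testing the whole chain on a small explicit tube, say rank $2$ and rank $3$, where all four conditions can be checked by hand against the AR-quiver.
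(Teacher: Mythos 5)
Your overall strategy (decompose $\Hom_{\C}$ into a module-category part plus a derived-category part, then use tube combinatorics) is the same as the paper's, but the decomposition you use is wrong, and the error is fatal rather than cosmetic. You write $\Hom_{\C}(M,N)\cong \Hom_H(M,N)\oplus \Ext^1_H(M,N)$, so that the extra term is $D\Hom_H(N,\tau M)$ by AR duality. The correct formula, which the paper uses, is $\Hom_{\C}(M,N)=\Hom_H(M,N)\oplus\Hom_{D^b(H)}(M,\tau^{-1}N[1])$, and the second summand is $\Ext^1_H(M,\tau^{-1}N)\cong D\Hom_H(\tau^{-1}N,\tau M)\cong D\Hom_H(N,\tau^{2}M)$ --- note the $\tau^{2}$, not $\tau$. (You appear to have conflated the $\Hom_{\C}$ formula with the $\Ext^1_{\C}$ formula $\Ext^1_{\C}(X,Y)=\Ext^1_H(X,Y)\oplus D\Ext^1_H(Y,X)$.) This off-by-one in $\tau$ destroys both (a) and (c). For (a): since $M$ is exceptional it is rigid, so $\Ext^1_H(M,M)=0$, and by AR duality $\Hom_H(M,\tau M)\cong D\Ext^1_H(M,M)=0$ for \emph{every} exceptional $M$; hence your claim that $\Hom_H(M,\tau M)\neq 0$ iff $\ql M=t-1$ is false, and your formula would give $\End_{\C}(M)\cong k$ for all exceptional $M$, i.e.\ no non-$\C$-bricks exist at all, contradicting the very statement being proved. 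The correct criterion is $\Hom_H(M,\tau^{2}M)\neq 0$ iff $\ql M=t-1$.

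For (c) the same error kills your key bridge (ii)$\Leftrightarrow$(iii): you claim $\dim\Ext^1_H(M,N)=\dim\Hom_H(M,N)$ when $\ql M=t-1$, but already for $N=M$ one has $\Ext^1_H(M,M)=0$ while $\dim\Hom_H(M,M)=1$, so your formula yields $\dim\Hom_{\C}(M,M)=1$, contradicting (iii) --- which does hold for $N=M$, since all four conditions are satisfied there. The correct computation, and the paper's entire proof, is: $\dim\Hom_{\C}(M,N)=\dim\Hom_H(M,N)+\dim\Hom_H(N,\tau^{2}M)$, combined with Ringel's facts that $\dim\Hom_H(M,N)\leq 1$ and that $\Hom_H(M,N)\neq 0$ iff $\Hom_H(N,\tau^{2}M)\neq 0$ iff $N\notin\W_{\tau M}$; the equivalences in (c), and then (a) by specializing, follow at once. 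Your argument for (b) (two quasilength-$(t-1)$ objects in the same tube are never $\Ext_{\C}$-orthogonal unless isomorphic) is fine once (a) and the correct Hom facts are in place, and is more explicit than the paper's "well-known and easy to see". Ironically, you flagged off-by-one periodicity bookkeeping as the main risk: that is precisely where the proposal fails, and the sanity check you propose (a small tube, or simply the case $N=M$) would have exposed it immediately.
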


\begin{proof}
For (c), see~\cite{ring} for the fact that $\dim \Hom_H(M,N) \leq 1$,
and the fact that
$\Hom_H(M,N) \neq 0$ if and only if $\Hom_H(N, \tau^2 M) \neq 0$
if and only if $N \not \in \W_{\tau M}$.
We have
$$\Hom_{\C}(M,N) = \Hom_H(M,N) \amalg \Hom_{\D}(M, \tau^{-1}N[1])$$
and
$$\Hom_{\D}(M, \tau^{-1}N[1]) \simeq D \Hom_{\D}(N, \tau^2 M),$$
so the equivalence in (c) follows and (a) follows.

Part (b) is well-known and easy to see.
\end{proof}

The following is well-known by~\cite{strauss}.

\begin{lemma}\label{l:wing-tilt}
Assume that a cluster-tilting object $T$ in $\C_H$ has a regular summand $M$.
Then the summands of $T$ lying in $\W_M$ form a tilting object in $\W_M$.
\end{lemma}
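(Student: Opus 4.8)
The plan is to transport the problem into the abelian category $\W_M\simeq\mod\L_t$ and there recognise the summands of $T$ in question as a tilting module, by separately checking rigidity and counting. First I would record the two structural facts that make this transport exact. Since $M$ is an \emph{exceptional} regular module, its quasi-length $t$ is strictly smaller than the rank $r$ of $\T_M$; a support argument inside the uniserial tube then shows that $\W_M$ is closed under extensions in $\mod H$, so that $\Ext^1_{\W_M}(A,B)=\Ext^1_H(A,B)$ for all $A,B\in\W_M$. Combining this with the standard cluster-category symmetry $\Ext^1_{\C}(A,B)\simeq\Ext^1_H(A,B)\oplus D\Ext^1_H(B,A)$, the hypothesis $\Ext^1_{\C}(T,T)=0$ forces $\Ext^1_{\W_M}(T_i,T_j)=0$ for every pair of summands $T_i,T_j$ of $T$ lying in $\W_M$. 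Thus these summands form a rigid (partial tilting) module $S$ in $\mod\L_t$, and note that $M$ itself is one of them, sitting as the projective--injective object of $\W_M$.

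Next I would bound the number $s$ of such summands. The upper bound $s\leq t$ is immediate, since a rigid module over a hereditary algebra with $t$ simple modules has at most $t$ pairwise non-isomorphic indecomposable summands. It remains to prove $s=t$, which is the heart of the matter. Suppose $s<t$. Then $S$ is not tilting in $\mod\L_t$, so by Bongartz completion over the representation-finite algebra $\L_t$ there is an indecomposable $N\in\W_M$, not a summand of $T$, with $S\amalg N$ still rigid in $\mod\L_t$; by extension-closedness this already yields $\Ext^1_{\C}(N,T_i)=0$ for every summand $T_i$ of $T$ lying in $\W_M$.

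The main obstacle is to upgrade this to $\Ext^1_{\C}(N,T)=0$, i.e.\ to control homomorphisms between $N$ and the summands $T_j$ of $T$ \emph{outside} $\W_M$. For $T_j$ in a different tube the relevant $\Hom$-spaces vanish for tube-separation reasons, so $\Ext^1_{\C}(N,T_j)=0$ automatically. For $T_j$ transjective, or regular in $\T_M$ but outside $\W_M$, I would exploit that $M$ is \emph{already} a summand of the rigid object $T$, so that $\Hom_H(T_j,\tau M)=0=\Hom_H(M,\tau T_j)$, together with the fact that $N$ is a subfactor of $M$ (hence $\tau N$ a subfactor of $\tau M$): left-exactness of $\Hom$ propagates the vanishing to the submodule-type subfactors, and the dual argument applied to the other exchange triangle, together with the filtration of $N$ by the quasi-simples of $\W_M$, disposes of the quotient-type subfactors. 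Granting this, $T\amalg N$ is rigid in $\C$, contradicting the maximality of the cluster-tilting object $T$; hence $s=t$, and a rigid module with $t$ indecomposable summands over $\L_t$ is tilting. I expect this last hom-vanishing against transjective summands to be the genuinely delicate point, and the cleanest route may be to first use~\cite[3.3]{bmrrt} to replace $H$ by a derived-equivalent hereditary algebra over which $T$ becomes an honest tilting module, leaving $M$ and $\W_M$ unchanged, and then to invoke the classical result of Strauss~\cite{strauss} for tilting modules over tame hereditary algebras directly.
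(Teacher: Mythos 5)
The paper gives no argument for this lemma at all: it is stated as ``well-known by [Strauss]'', the implicit reduction being exactly the fact, quoted in the paper's preliminaries from~\cite[3.3]{bmrrt}, that every cluster-tilting object in $\C_H$ is induced by a tilting module over a hereditary algebra derived equivalent to $H$ (which leaves the tube, and hence $\W_M$, untouched). So your closing suggestion is precisely the paper's proof; the long direct argument that precedes it is your own addition, and it is there that the problems lie.

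The direct route is sound up to and including the Bongartz completion $N$ inside $\W_M$ (extension-closure of the wing, transfer of rigidity, the count $s\leq t$). The gap is the compatibility of $N$ with the summands $T_j$ of $T$ outside $\W_M$, exactly where you flagged it. First, rigidity does not give $\Hom_H(M,\tau T_j)=0$: the AR formula yields $\overline{\Hom}_H(M,\tau T_j)=0$ (maps modulo injectives), whose genuine-Hom form is $\Hom_H(\tau^{-1}M,T_j)=0$; these differ when $T_j$ is preinjective. Second, left-exactness only propagates $\Hom_H(T_j,\tau M)=0$ to actual submodules of $\tau M$; a general object of $\W_{\tau M}$ --- already any quasi-simple other than the quasi-socle of $\tau M$ --- is a proper subfactor $V/K$ with $V,K\subseteq\tau M$, and your filtration-by-quasi-simples reduction is circular, because those quasi-simples are themselves the problematic subfactors. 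The missing step is the connecting map of the hereditary long exact sequence together with AR duality: from $\Hom_H(T_j,V)=0$ one gets $\Hom_H(T_j,V/K)\hookrightarrow\Ext^1_H(T_j,K)\simeq D\Hom_H(\tau^{-1}K,T_j)$, and the latter vanishes for preprojective $T_j$ because there are no maps from regular to preprojective modules (the preinjective and shifted-projective summands need separate, easier, arguments). Third, a cluster-tilting object really can have regular summands $T_j$ in $\T_M$ but outside $\W_M$ (either with $\W_M\subsetneq\W_{T_j}$ or with the two wings separated inside the tube); for such $T_j$ the reduction above lands back in $\Hom$-spaces inside the tube, so one needs an explicit combinatorial check on the arcs of quasi-simples, which nothing in your sketch supplies. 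As written, then, the direct argument does not close; what carries the proposal is the final reduction to Strauss, i.e.\ the paper's own route.
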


We recall the notion of a Bongartz complement~\cite{bon}:

\begin{lemma}\label{l:bon-gen}
Let $N$ be a partial tilting module with no projective direct summands. 
Then there exists a complement
$E$, known as the \emph{Bongartz complement} of $N$, with the following
properties:
\begin{itemize}
\item[(a)] The module $E$ satisfies the following properties
  \begin{itemize}
  \item[(B1)] $\Ext^1_H(N,A) = 0$ implies $\Ext^1_H(E,A) = 0$ for any $A$
in $\mod H$.
  \item[(B2)] $\Hom_H(N,E) = 0$.
    \end{itemize}
\item[(b)] If a complement $E'$ of a partial tilting module $X$ satisfies
(B1) and (B2), then $E' \simeq E$, where $E$ is the Bongartz complement.  
\end{itemize}
\end{lemma}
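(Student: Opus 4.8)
The plan is to use Bongartz's classical universal extension. Since $H$ is hereditary and $N$ is a partial tilting module, the space $\Ext^1_H(N,H)$ is finite dimensional; I would choose elements $\xi_1,\dots,\xi_d$ generating it as a right module over $\End_H(N)$, view $\eta=(\xi_1,\dots,\xi_d)$ as a class in $\Ext^1_H(N^d,H)\cong\Ext^1_H(N,H)^d$, and take a short exact sequence
\[
0\to H\to E_N\to N^d\to 0
\]
representing $\eta$. This is the sequence of part (a) (with the right-hand term a direct sum of copies of $N$, which is what makes the argument work and is needed for the tilting axiom below). I then \emph{define} $E$ to be the direct sum of one copy of each indecomposable summand of $E_N$ not isomorphic to a summand of $N$, so that $E_N\in\add(N\oplus E)$ and $E$ has no summand in common with $N$.

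Next I would show $N\oplus E$ is a tilting module. The condition $\pd_H(N\oplus E)\le 1$ is automatic since $H$ is hereditary. Rigidity $\Ext^1_H(N\oplus E,N\oplus E)=0$ reduces, as $E$ is a summand of $E_N$, to the vanishing of $\Ext^1_H(U,U')$ for $U,U'\in\{N,E_N\}$: the case $\Ext^1_H(N,N)=0$ is given; applying $\Hom_H(N,-)$ to the defining sequence, the connecting map $\Hom_H(N,N^d)\to\Ext^1_H(N,H)$ is surjective by the choice of the $\xi_j$, giving $\Ext^1_H(N,E_N)=0$; applying $\Hom_H(-,N)$ and using $\Ext^1_H(N^d,N)=0$ together with the projectivity of $H$ gives $\Ext^1_H(E_N,N)=0$; finally applying $\Hom_H(-,E_N)$ and using $\Ext^1_H(N^d,E_N)=\Ext^1_H(N,E_N)^d=0$ gives $\Ext^1_H(E_N,E_N)=0$. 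The defining sequence is then a short exact sequence with $E_N,N^d\in\add(N\oplus E)$, which is exactly the third tilting axiom, so $N\oplus E$ is tilting and $E$ is a complement of $N$. Property (B1) is immediate: applying $\Hom_H(-,A)$ to the defining sequence shows $\Ext^1_H(E_N,A)$ is a quotient of $\Ext^1_H(N^d,A)=\Ext^1_H(N,A)^d$, so $\Ext^1_H(N,A)=0$ forces $\Ext^1_H(E,A)=0$.

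The \textbf{main obstacle} is property (B2), $\Hom_H(N,E)=0$, for the \emph{constructed} $E$. Applying $\Hom_H(N,-)$ to the defining sequence only shows that $\Hom_H(N,E_N)$ is an extension of $\Ker\bigl(\Hom_H(N,N^d)\to\Ext^1_H(N,H)\bigr)$ by $\Hom_H(N,H)$, and neither group vanishes in general; the content of (B2) is that every such homomorphism factors through the summands of $E_N$ lying in $\add N$, so that no map from $N$ survives to the complementary summands making up $E$. I would establish this by identifying $E$ with the Ext-projective generator of the torsion class $\mathcal{T}_N=\{A:\Ext^1_H(N,A)=0\}$ and showing its summands outside $\add N$ receive no map from $N$; equivalently, by choosing $\xi_1,\dots,\xi_d$ minimally so that the kernel of $\Hom_H(N,N^d)\to\Ext^1_H(N,H)$ consists precisely of maps into the $\add N$-part of $E_N$. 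This is the delicate point, and it is exactly where the specific (maximal) nature of the Bongartz complement, rather than an arbitrary complement, is used.

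For the uniqueness statement (c) I would argue through torsion classes. Let $E'$ be a complement of $N$ (the statement's $X$) satisfying (B1) and (B2). Property (B1) gives $\mathcal{T}_N\subseteq\operatorname{Gen}(N\oplus E')$, and the reverse inclusion is automatic for a tilting module, so $\operatorname{Gen}(N\oplus E')=\mathcal{T}_N=\operatorname{Gen}(N\oplus E)$. Since over a hereditary algebra a basic tilting module is determined up to isomorphism by its torsion class (being its Ext-projective generator), we obtain $N\oplus E'\cong N\oplus E$. Finally, (B2)—which holds for $E$ by the previous step and for $E'$ by hypothesis—forbids any common indecomposable summand with $N$ (such a summand $N_i$ would contribute $\id_{N_i}$ to the relevant $\Hom_H(N,-)$), so Krull--Schmidt yields $E'\cong E$.
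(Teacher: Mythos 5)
The paper's own proof of this lemma is a bare citation (Bongartz for (a), Happel's book for (b) and (c)), so your argument is in effect reconstructing what those references contain, and most of your reconstruction is the standard one and is correct: the universal extension $0\to H\to E_N\to N^d\to 0$ built from generators of $\Ext^1_H(N,H)$, the rigidity checks via the long exact sequences, the coresolution axiom making $N\oplus E$ tilting, the quotient argument for (B1), and the torsion-class argument for (c) (using that a basic tilting module over a hereditary algebra is determined by its torsion class, and that (B1) forces $\operatorname{Gen}(N\oplus E')=\{A:\Ext^1_H(N,A)=0\}=\operatorname{Gen}(N\oplus E)$) are all sound. Two minor remarks: your right-hand term $N^d$ rather than the paper's $N$ is indeed the correct form of Bongartz's lemma; and at the end of (c) you do not need (B2) at all, since a complement is by definition summand-disjoint from $N$, so Krull--Schmidt cancellation of $N$ is automatic.

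The genuine gap is exactly the point you flagged: (B2) is not proved, only a strategy is sketched --- and no strategy can succeed at the stated level of generality, because (B2), and likewise the clause ``$E_N\in\add E$'' in (a), are \emph{false} for an arbitrary partial tilting module. Take $H=k(1\to 2)$ and $N=P_2$ the simple projective. Then $P_1$ is the only complement of $N$ (since $\Ext^1_H(S_1,P_2)\neq 0$), hence $P_1$ \emph{is} the Bongartz complement, and it is also the module your construction produces (here $\Ext^1_H(N,H)=0$, so $d=0$ and $E_N=H=P_1\oplus P_2$); yet $\Hom_H(P_2,P_1)\neq 0$, so (B2) fails, and $E_N\notin\add P_1$, so the $\add E$ clause of (a) fails as well. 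Consequently your proposed repair --- choosing the classes $\xi_i$ minimally so that the kernel of $\Hom_H(N,N^d)\to\Ext^1_H(N,H)$ consists of maps into the $\add N$-part of $E_N$ --- cannot work: whenever the universal extension has middle-term summands in $\add N$ (for instance whenever $N$ has projective summands), the statement itself breaks, not just the proof. The lemma needs an extra hypothesis; in the paper it is only ever applied to regular exceptional modules over a tame hereditary algebra, where $\Hom_H(N,Q)=0$ for $Q$ preprojective and the wing structure of the regular summands yield (B2). Your idea of characterizing $E$ as the Ext-projective generator of $\mathcal{T}_N=\{A:\Ext^1_H(N,A)=0\}$ is correct --- it is precisely what your part (c) establishes --- but it captures only (B1)-type information: Ext-projectivity in $\mathcal{T}_N$ says nothing about $\Hom_H(N,E)$, as the example shows.
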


\begin{proof}
See~\cite{h}.
\end{proof}

We are especially interested in the Bongartz complements of certain regular
modules.

\begin{lemma}\label{l:bon-reg}
Let $X=X_t$ be an exceptional regular indecomposable module with $\ql X = t$.
For $i = 1, \dots, t-1$, let $X_i$ be the regular indecomposable exceptional
module such that there is an irreducible monomorphism $X_i \to X_{i+1}$. 
Then there is a preprojective module $\widetilde{Q}$ such that:
\begin{itemize} 
\item[(a)] The Bongartz complement of $X=X_t$ is
$X_1 \amalg \dots \amalg X_{t-1} \amalg \widetilde{Q}$.
\item[(b)] The Bongartz complement of
$\widehat{X} = X_1 \amalg \dots \amalg X_{t-1} \amalg X_t$ is $\widetilde{Q}$.
\item[(c)] All partial tilting modules $Y$ such that $Y$ is a tilting object
in $\W_X$ have Bongartz complement $\widetilde{Q}$.
\end{itemize}
\end{lemma}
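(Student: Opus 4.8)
The plan is to define $Q$ as the Bongartz complement of $\widehat{X}=X_1\amalg\cdots\amalg X_t$ and to test all three statements against the characterisation of the Bongartz complement by the properties (B1) and (B2) of Lemma~\ref{l:bon-gen}. Throughout I would use three tools: the Auslander--Reiten formula, in the form that for $M,A\in\mod H$ one has $\Ext^1_H(M,A)=0$ if and only if $\Hom_H(A,\tau M)=0$; the fact that $\tau$ restricts to an exact autoequivalence of the tube $\T_X$, so that it preserves inclusions inside the wing; and the vanishing relations of Section~2 (no nonzero maps from regular modules to preprojective ones, and none from preinjectives or from other tubes into $\T_X$). Since the chain $X_1\hookrightarrow\cdots\hookrightarrow X_t$ exhibits $X_1,\dots,X_t$ as a tilting object of $\W_X$ (Lemma~\ref{l:wing-tilt}) and $\W_X$ is extension-closed in $\mod H$, the module $\widehat X$ is a partial tilting $H$-module; let $Q$ be its Bongartz complement, so that $\widehat X\amalg Q$ is a tilting module and $Q$ has $n-t$ indecomposable summands.

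To prove (b) I would show $Q$ is preprojective. The indecomposable summands of the tilting module $\widehat X\amalg Q$ are precisely the indecomposable $\Ext$-projective objects of its torsion class $\{A:\Ext^1_H(\widehat X,A)=0\}$, which by the Auslander--Reiten formula equals $\{A:\Hom_H(A,\tau\widehat X)=0\}$ and in particular contains every preinjective module. An indecomposable $\Ext$-projective object $P$ must satisfy $\Hom_H(A,\tau P)=0$ for all $A$ in this class; taking $A=\tau P$ whenever $\tau P$ itself lies in the class rules out $P$ preinjective and $P$ regular outside the wing, while the wing structure (Lemma~\ref{l:homs} together with $\W_X\simeq\mod\Lambda_t$) shows that the only regular $\Ext$-projectives are $X_1,\dots,X_t$. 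Hence the remaining $n-t$ summands, namely $Q$, are preprojective.

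Given (b), part (a) follows by checking (B1) and (B2) for $N=X_t$ with candidate complement $X_1\amalg\cdots\amalg X_{t-1}\amalg Q$, which is a genuine complement because $\widehat X\amalg Q$ is tilting. Property (B2) holds since $\Hom_H(X_t,Q)=0$ (a regular object mapping into a preprojective one) and $\Hom_H(X_t,X_i)=0$ for $i<t$, the latter because under $\W_X\simeq\mod\Lambda_t$ the module $X_t$ admits no nonzero map to the proper subobject $X_i$ of the chain. For (B1), each inclusion $X_i\hookrightarrow X_t$ gives, after applying the exact functor $\tau$ on $\T_X$, a monomorphism $\tau X_i\hookrightarrow\tau X_t$; hence $\Hom_H(A,\tau X_t)=0$ forces $\Hom_H(A,\tau X_i)=0$, so $\Ext^1_H(X_t,A)=0$ implies $\Ext^1_H(X_i,A)=0$ for every $i$ and therefore $\Ext^1_H(\widehat X,A)=0$; property (B1) for $\widehat X$ then yields $\Ext^1_H(Q,A)=0$. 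By Lemma~\ref{l:bon-gen} this identifies the Bongartz complement of $X_t$.

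For (c) let $Y$ be an arbitrary tilting object of $\W_X$; property (B2) is again immediate, so it remains to prove that $Y\amalg Q$ is a tilting module and that (B1) holds, after which Lemma~\ref{l:bon-gen} gives the claim. For (B1) I would apply $\Ext^1_H(-,A)$ to an $\add Y$-resolution of each $X_i$ inside the hereditary category $\W_X$, propagating the vanishing of $\Ext^1_H(Y,A)$ to $\widehat X$ and hence, via (B1) for $\widehat X$, to $Q$. The main obstacle is showing that $Y\amalg Q$ is tilting for \emph{every} wing-tilting $Y$, i.e.\ that $\Ext^1_H(N,Q)=0$ for all $N\in\W_X$, equivalently $\Hom_H(Q,\tau N)=0$ for every $N$ in the wing. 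This is the delicate point, because it concerns exactly the maps from the preprojective module $Q$ into the tube $\T_X$, and it is here that the fine internal structure of $\W_X$ must be used: the intended route is to embed $N$ into $\add\widehat X$ inside the wing and apply the exact $\tau$, thereby reducing the required vanishing to the already-established relations $\Hom_H(Q,\tau\widehat X)=0$. Making this embedding precise, and reconciling it with the submodule description of the $X_i$ used in part~(a), is where the real work of the proof lies.
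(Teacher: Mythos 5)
Your overall architecture is genuinely different from the paper's: you define $Q$ via part (b) (as the Bongartz complement of $\widehat{X}$, shown preprojective by a torsion-theoretic argument) and then deduce (a), whereas the paper proves (a) first by analysing the summands of the Bongartz complement of the single module $X$ using (B1), (B2) and Lemma~\ref{l:wing-tilt}, after which (b) is immediate; your (B1)-propagation in (c) coincides with the paper's use of Lemma~\ref{l:move}. However, the proposal has a genuine gap, and it is exactly the step you yourself flag as ``the real work'': in (c) you must show that $Y\amalg Q$ is a tilting module (Lemma~\ref{l:bon-gen}(c) only identifies a \emph{complement} satisfying (B1) and (B2) as the Bongartz complement), i.e.\ that $\Ext^1_H(Y,Q)=0$, and the route you sketch for this cannot work. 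You propose to embed $N\in\W_X$ into $\add\widehat{X}$ and apply $\tau$; but under $\W_X\simeq\mod\Lambda_t$ the modules $X_1,\dots,X_t$ are precisely the indecomposable \emph{projectives}, so $\widehat{X}$ is a projective generator and not an injective cogenerator of the wing, and for $t\geq 2$ most objects of $\W_X$ (for instance any simple object of $\W_X$ other than the common quasi-socle of the $X_i$) admit no monomorphism into any module of $\add\widehat{X}$, since every nonzero submodule of such a module meets its socle. The repair is the dual construction: every $N\in\W_X$ is a \emph{quotient} of $\add\widehat{X}$, so take $0\to K\to\widehat{X}_0\to N\to 0$ exact in $\W_X$ (hence in $\mod H$) with $K\in\W_X$ regular; applying $\Hom_H(-,Q)$ gives an exact sequence $\Hom_H(K,Q)\to\Ext^1_H(N,Q)\to\Ext^1_H(\widehat{X}_0,Q)$ whose outer terms vanish (no maps from regular to preprojective modules; $\widehat{X}\amalg Q$ is tilting), so $\Ext^1_H(N,Q)=0$. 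Combined with $\Ext^1_H(Q,Y)=0$ (take $A=Y$ in your (B1) implication) and a count of summands, this makes $Y\amalg Q$ tilting and closes the gap.

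There is a second, smaller gap in your proof of (b): the assertion that ``the wing structure shows the only regular $\Ext$-projectives are $X_1,\dots,X_t$'' is doing unexamined heavy lifting, because your witness $A=\tau P$ is not always available. There exist regular indecomposables $P$ in $\T_X$, not among the $X_i$, which lie in the torsion class and satisfy $\Hom_H(P,\widehat{X})\neq 0$, so that $\tau P$ is \emph{not} in the class: in the paper's Section~6 example, take $X$ to be the quasi-simple module $2$ (so $\widehat{X}=X$, $\tau\widehat{X}=3$) and $P={\begin{smallmatrix}2\\3\end{smallmatrix}}$; then $\Hom_H(P,3)=0$ but $\Hom_H(\tau P,3)\neq 0$, so ruling out $P$ requires producing a different test object (here $A={\begin{smallmatrix}1\\4\end{smallmatrix}}$ works, since $\Hom_H(A,3)=0$ while $\Hom_H(A,\tau P)\neq 0$). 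Handling all such $P$ in general is essentially the content the paper proves in (a) via (B2), the complement property, and Lemma~\ref{l:wing-tilt}, so this step needs an actual argument rather than an appeal to ``wing structure''.
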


\begin{proof}
For (a), first note that $\Ext^1_H(X,\tau A) = 0$ while
$\Ext^1_H(A,\tau A) \neq 0$ for any indecomposable module $A$ which is
either preinjective or regular with $\T_A \neq \T_X$.
Hence by (B1) the summands in $\widetilde{Q}$ are either preprojective or
regular
and lie in $\T_X$.
The property (B2) shows that any regular summand of the Bongartz
complement $E$ of $X$ must be in $\W_{\tau X}$, by Lemma~\ref{l:homs}. 
The fact that $E$ is a complement implies that any regular summand must
be in $\W_{X'}$, where $X' \to X$ is an irreducible monomorphism,
since an object $Z$ in $\W_{\tau X} \setminus \W_{X'}$ has $\Ext(X,Z) \neq 0$.
We claim that for any indecomposable regular summand $E'$ of $E$ there is a
monomorphism $E' \to X$.
Assume $E'$ is an indecomposable regular summand 
of $E$. Then, if $E'$ is in $\W_{X'}$, but there is no monomorphism to $X$,
the module $\tau E'$ will satisfy $\Ext^1(X,\tau E')= 0$, while
$\Ext^1(E',\tau E') \neq 0$, a contradiction to (B1).
Since $X\amalg E$
is a cluster-tilting object in $\C$, it follows from Lemma~\ref{l:wing-tilt}
that all indecomposable regular objects in the tube of $X$ with monomorphisms
to $X$ are summands of $E$.

Part (b) is easily verified, noting that (B1) and (B2) are satisfied. 

For (c) we show that if a module $A$ satisfies $\Ext^1_H(Y,A)= 0$, then it
satisfies $\Ext^1_H(\widehat{X},A)= 0$. Then it follows that
$\Ext^1_H(\widetilde{Q},A)= 0$, which implies that $\widetilde{Q}$ satisfies (B1); (B2) is clearly
satisfied.

To see that $\Ext^1_H(\widehat{X},A)= 0$ we use that $\W_X$ is equivalent
to $\mod \Lambda_t$, where $\Lambda_t$ is the path algebra of the Dynkin
quiver $A_t$ with linear orientation.
Now let $Y_i$ be a direct summand in $Y$ which is generated by 
$Y/Y_i$, and consider the exact sequence $0 \to Y^{\ast} \to B \to Y \to 0$,
where $B \to Y$ is the minimal right $\add Y/Y_i$-approximation.
Then $\Ext^1_H(Y^{\ast},A)= 0$, since we have an epimorphism 
$\Ext^1_H(B,A) \to \Ext^1_H(Y^{\ast},A)$.
Iterating this sufficiently many times, which is possible by  
Lemma~\ref{l:move}, we get that $\Ext^1_H(\widehat{X},A)= 0$.
\end{proof}

\begin{lemma} \label{l:nicepreprojectivecomplement}
Let $X_1,\ldots ,X_t=X$ be as in Lemma~\ref{l:bon-reg}. Then there is preprojective
module $Q$ such that:
\begin{enumerate}
\item[(a)] All partial tilting modules $Y$ such that $Y$ is a tilting
object in $\W_X$ have complement $Q$.
\item[(b)] $Q$ generates $X$.
\end{enumerate}
\end{lemma}

\begin{proof}
Let $\widetilde{Q}$ be as in Lemma~\ref{l:bon-reg}, and let $r$ be the rank of the
tube containing $X$. 
%Only finitely many of the roots of a tame hereditary algebra are not sincere
%(see~\cite[1.8]{dlabringel}). 
%Therefore for any preprojective indecomposable module, $\tau^{-m}N$ is sincere for $m$ large 
%enough. (since, as they are exceptional, nonisomorphic preprojective indecomposable
%modules have distinct dimension vectors: see~\cite{kerner}).
We have that
$Q=\tau^{-kr}\widetilde{Q}$ is sincere for $k$ large enough, see \cite{aus-pla}.
Since $\tau^{r}$ preserves $\T$, (a) follows from
Lemma~\ref{l:bon-reg}(c) (but note that it may no longer be the case
that $Q$ is the Bongartz complement).
Let $Y_1=X,Y_2,\ldots ,Y_t$
be exceptional regular indecomposable modules such that there is an
irreducible epimorphism $Y_i\to Y_{i+1}$ for all $i$. By (a), $Q$ is
a complement of $Y_1\amalg \cdots \amalg Y_t$.
Hence $X=Y_1$ is a complement of
$\overline{U}=Q\amalg Y_2\amalg \cdots \amalg Y_t$.
Since $Q$ is sincere, so is the almost complete tilting module $\overline{U}$.
By~\cite{hu}, $\overline{U}$ has exactly two nonisomorphic indecomposable
complements, $Z$ and $Z'$, and there is a short exact sequence
$$0\to Z\to U'\to Z'\to 0$$
where $Z\to U'$ is a minimal left $\add(\overline{U})$-approximation in
$\mod H$. Thus exactly one of $Z,Z'$ is isomorphic to $X$; we claim it is $Z'$.
Since $Q$ is preprojective, $\Hom_A(X,Q)=0$.
Since the module maps from $X$ to the $Y_i$ all factor through $Y_{t-1}$,
the minimal left $\add(\overline{U})$-approximation of $X$ is
a non-zero map $X\to Y_{t-1}$ and is therefore not a monomorphism.
It follows that $Z'\cong X$
and thus that $\overline{U}$ generates $X$. Since $\Hom_A(Y_i,X)=0$ for
$2\leq i\leq t$ (from the structure of the tube containing $X$),
we see that $Q$ generates $X$ as required.
\end{proof}

\begin{lemma}\label{l:good}
Let $\T$ be a tube of rank $t+1$ and $M$ an exceptional object in $\T$ which
is not a $\C$-brick.
Let $X=X_s$ be an exceptional indecomposable with $\ql X = s \leq t$. 
\begin{itemize}
\item[(a)] 
There is a complement $N$ of $X$ in $\W_X$ all of whose summands 
lie in $\W_{\tau M}$. 
\item[(b)] The partial tilting module $X\amalg N$ has a preprojective
complement $Q$ which generates $X$.
\end{itemize}
\end{lemma}

\setlength{\unitlength}{0.2cm}
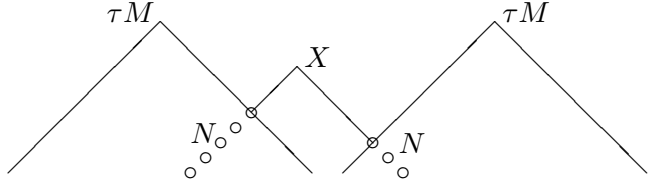
\begin{figure}
\begin{picture}(50,10)
\put(0,0){\line(1,1){10}}
\put(22,0){\line(1,1){10}}
\put(10,10){\line(1,-1){10}}
\put(32,10){\line(1,-1){10}}
\put(6.5,10){$\tau M$}
\put(32.5,10){$\tau M$}
\put(16,4){\line(1,1){3}}
\put(19,7){\line(1,-1){5}}
\put(19.5,7){$X$}
\put(12,2){$N$}
\put(25.7,1.3){$N$}
\multiput(11.5,-0.5)(1,1){5}{$\circ$}
\multiput(25.5,-0.5)(-1,1){3}{$\circ$}
\end{picture}
\caption{A complement $N$ of $X$ in $\W_X$ with summands (indicated by $\circ$) in $\W_{\tau M}$:
see Lemma~\ref{l:good}(a).}
\label{f:good}
\end{figure}

\begin{proof}
See Figure~\ref{f:good} for a pictorial representation of this lemma.
We can assume $X \not \in \W_{\tau M}$, since otherwise the result follows
directly from Lemma~\ref{l:nicepreprojectivecomplement}.

For (a) consider the relative projective tilting object in $\W_X$ given by
$X_1 \amalg \dots \amalg X_{s-1} \amalg X_s$, with $\ql X_j = j$.
If $X_{s-1} \not \in \W_{\tau M}$, consider the non-split exact sequence
$0 \to X_{s-1} \to X \to X_{s-1}' \to 0$.
We claim that $X_{s-1}'$ is in $\W_{\tau M}$.
For this, apply $\Hom_H(M,\ )$ to the above exact sequence.
Since by assumption $\Hom_H(M,X_{s-1}) \neq 0$, we have that
$\Hom_H(M,X_{s-1}) \to \Hom_H(M,X)$ is surjective. 
The map $(\Ext^1_H(M,X_{s-1}) \to \Ext^1_H(M,X)) \simeq (D\Hom_H(X_{s-1},\tau M) \to D\Hom_H(X,\tau M))$ 
is a monomorphism, since
$$\Hom_H(X,\tau M) \to \Hom_H(X_{s-1},\tau M)$$
is an epimorphism. The last statement follows since $\tau M$
is not a factor of $X_{s-1}$.
 
We also claim that $X_{s-1}'$ is a complement of
$X_1 \amalg \dots \amalg X_{s-2} \amalg X_s$ in $\W_X$.
This follows from the fact that the map $X_{s-1} \to X$ is a minimal left 
$\add X_1 \amalg \dots \amalg X_{s-2} \amalg X_s$-approximation,
together with Proposition~\ref{old}.

Now, if necessary, we exchange $X_{s-2}$ using the minimal 
$\add X_1 \amalg \dots \amalg X_{s-3} \amalg X_{s-1}' \amalg X_s$-approximation
$X_{s-2} \to X$.
The same argument as above shows that the cokernel of this map gives us a
complement in $\W_{\tau M}$.
We iterate this at most $s-1$ times, until we obtain a complement
$$N=X_1\amalg \cdots \amalg X_k\amalg X_{k+1}'\amalg \cdots \amalg X_{s-1}'$$
for $X$ in $\W_X$, with $0\leq k\leq s-1$, all of whose summands lie in
$\W_{\tau M}$, as required.

Since $X\amalg N$ is a tilting object in $\W_X$, part (b) follows
immediately from Lemma~\ref{l:nicepreprojectivecomplement}.
\end{proof}

\section{The main result}
In this section, we show the main theorem. The proof will follow from a
series of lemmas.
Throughout this section, let $T$ be a cluster-tilting object in the cluster
category $\C_H$ of a tame hereditary algebra $H$. We assume that 
$T$ has a summand which is not a $\C$-brick.
We have the following preliminary results.

\begin{lemma}\label{l:app-extend}
Let $Z$ be an exceptional indecomposable regular module.
Let $X \amalg Y$ be a tilting object in $\W_{Z}$, with $X$ indecomposable.
Assume $U \amalg X \amalg Y$ is a tilting module in $\mod H$,
where $U$ has no preinjective summands. 

\begin{itemize}
\item[(a)] Let $B \to X$ be the minimal right $\add Y$-approximation in
$\W_{Z}$ and assume there is an exchange sequence $0 \to X^{\ast} \to B \to X \to 0$ in
$\W_{Z}$. 
Then $B \to X$ is a right $\add U \amalg Y$-approximation. 
\item[(b)] Let $X \to B'$ be the minimal left $\add Y$-approximation in
$\W_{Z}$ and assume there is an exchange sequence $0 \to X \to B' \to X^{\ast} \to 0$ 
in $\W_{Z}$. Then $X \to B'$ is a left $\add U \amalg Y$-approximation. 
\end{itemize}
\end{lemma}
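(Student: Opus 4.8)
The plan is to prove part (a) in full and to obtain part (b) by the dual argument (reverse all arrows, replace right approximations and kernels by left approximations and cokernels, and interchange the roles of $\Hom_H(-,\tau U_j)$ and $\Hom_H(U_j,\tau(-))$). Write $U=\amalg_j U_j$ with each $U_j$ indecomposable. The first step is to reduce the approximation statement to an $\Ext$-vanishing. Since $B\in\add Y$ and $B\to X$ is the minimal right $\add Y$-approximation — and since $B,X,X^{\ast},Y$ all lie in $\W_Z$, so that a right $\add Y$-approximation in $\W_Z$ is equally one in $\mod H$ — a map $W\to X$ with $W\in\add(U\amalg Y)$ factors through $B\to X$ as soon as every map $U_j\to X$ does. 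Applying $\Hom_H(U_j,-)$ to the exchange sequence $0\to X^{\ast}\to B\to X\to 0$ and using $\Ext^1_H(U_j,B)=0$ (valid because $B\in\add Y$ and $\Ext^1_H(U_j,Y)=0$, as $U\amalg X\amalg Y$ is tilting), one sees that every map $U_j\to X$ factors through $B\to X$ if and only if $\Ext^1_H(U_j,X^{\ast})=0$. So part (a) is equivalent to $\Ext^1_H(U,X^{\ast})=0$.

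Next I would dispose of all summands of $U$ except those lying in the tube $\T_Z$, using Auslander--Reiten duality $\Ext^1_H(U_j,X^{\ast})\cong D\Hom_H(X^{\ast},\tau U_j)$ (valid since $H$ is hereditary). As $X^{\ast}$ is regular in $\T_Z$, the space $\Hom_H(X^{\ast},\tau U_j)$ vanishes whenever $\tau U_j$ is preprojective or regular in a tube different from $\T_Z$, i.e.\ whenever $U_j$ is preprojective or regular outside $\T_Z$; the only remaining dangerous case, $U_j$ preinjective, is excluded by the hypothesis that $U$ has no preinjective summand. (This is exactly where that hypothesis is used.) Hence $\Ext^1_H(U,X^{\ast})=0$ reduces to showing $\Hom_H(X^{\ast},\tau U_j)=0$ for each indecomposable summand $U_j$ of $U$ that is regular and lies in $\T_Z$.

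The remaining same-tube case is the main obstacle, and I expect it to be genuinely combinatorial rather than formal: the homological manipulations above become circular here, since $\Ext^1_H(U_j,X^{\ast})$ is precisely the cokernel of $\Hom_H(U_j,B)\to\Hom_H(U_j,X)$, which is the very factorization we want. By Lemma~\ref{l:wing-tilt} the summands of $U\amalg X\amalg Y$ lying in $\W_Z$ already exhaust the tilting object $X\amalg Y$, so each such $U_j$ lies in $\T_Z\setminus\W_Z$, while $U_j\amalg X\amalg Y$ is rigid in the tube. I would then compute $\Hom_H(X^{\ast},\tau U_j)$ directly, working in the uniserial category $\W_Z\simeq\mod\Lambda_t$ and the ambient tube via the ray--coray (interval/hammock) description of homomorphisms: $X^{\ast}\in\W_Z$ has its regular socle inside the segment of quasi-simples defining the wing, whereas $\tau U_j$ has its regular socle in the complementary segment (because $U_j\notin\W_Z$). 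Thus a nonzero map $X^{\ast}\to\tau U_j$ could arise only at a single borderline position, and I would rule this out using the rigidity relations $\Ext^1_H(U_j,X)=\Ext^1_H(U_j,Y_i)=0$ (equivalently $\Hom_H(X,\tau U_j)=\Hom_H(Y_i,\tau U_j)=0$), which forbid precisely that overlap. This gives $\Hom_H(X^{\ast},\tau U_j)=0$, completing the reduction.

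The core difficulty is therefore this positional analysis in the tube, which does not seem reducible to the formal machinery of the previous sections and must instead use the combinatorics of $\T_Z$ together with the tilting constraints, in the spirit of — but finer than — the wing computations of Section~4. As an alternative organizing principle I note that, since by Lemma~\ref{l:bon-reg} the tilting objects $X\amalg Y$ and $X^{\ast}\amalg Y$ of $\W_Z$ share a common preprojective Bongartz complement $Q$, one has $\Ext^1_H(Q,X^{\ast})=0$, and the no-preinjective hypothesis lets one connect the complement $U$ of $X\amalg Y$ to $Q$ through a chain of exchange sequences $0\to M^{\ast}\to B_M\to M\to 0$ as in Lemma~\ref{l:move} (along which $\Ext^1_H(-,X^{\ast})=0$ propagates by $\Ext^1_H(M^{\ast},X^{\ast})\twoheadleftarrow\Ext^1_H(B_M,X^{\ast})=0$); however, transporting the vanishing in the direction actually required still appears to need positional input of the kind above. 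Finally, part (b) follows by the dual computation: there the condition is that maps $X\to U_j$ factor through $X\to B'$, which is vacuous for $U_j$ preprojective or in another tube (as then $\Hom_H(X,U_j)=0$) and for $U_j$ preinjective (excluded by hypothesis), leaving the same same-tube case, now handled by the dual vanishing $\Hom_H(U_j,\tau X^{\ast})=0$.
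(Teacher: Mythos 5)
Your reduction of part (a) to the vanishing of $\Ext^1_H(U_j,X^{\ast})$ for each indecomposable summand $U_j$ of $U$ is correct (given $\Ext^1_H(U_j,B)=0$ the connecting-map argument is exactly right), and your treatment of the preprojective summands and of summands in other tubes via $\Ext^1_H(U_j,X^{\ast})\simeq D\Hom_H(X^{\ast},\tau U_j)$ agrees with what the paper does for the preprojective part. The genuine gap is in the same-tube case, which you yourself identify as the core. Your argument there rests on the assertion that, because $U_j\notin\W_Z$, the quasi-socle of $\tau U_j$ lies in the segment of quasi-simples complementary to that of $\W_Z$. This is false: $U_j$ may protrude from the wing at the top while its quasi-socle stays inside the segment. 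For instance, in a tube of rank $4$ with $\W_Z$ supported on $\{S_1,S_2,S_3\}$, the module with quasi-socle $S_2$ and quasi-length $3$ is not in $\W_Z$, yet its $\tau$-translate has quasi-socle $S_1$ and is supported on $\{S_1,S_2,S_3\}$, entirely inside the segment. Moreover, the rigidity relations you invoke, $\Hom_H(X,\tau U_j)=\Hom_H(Y_i,\tau U_j)=0$, constrain the positions of $X$ and the $Y_i$ relative to $\tau U_j$; they say nothing directly about $X^{\ast}$, and the whole difficulty (the circularity you flag yourself) is to transfer this positional information to $X^{\ast}$. As written, the step ``which forbid precisely that overlap'' is an assertion, not a proof.

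The gap can be closed, but it needs one further idea, and this is where your route and the paper's diverge. The paper never locates $X^{\ast}$ at all: since $X$ is uniserial in $\W_Z\simeq\mod\Lambda_t$, some indecomposable summand $B'$ of $B$ already maps onto $X$; then, using $\Hom_H(U_j,\tau X)=0=\Hom_H(U_j,\tau B')$ together with $U_j\notin\W_Z$, the image analysis in the tube shows that every nonzero map $U_j\to X$ and every nonzero map $U_j\to B'$ is an epimorphism, forcing $U_j$, $B'$ and $X$ onto a common coray with $B'$ a quotient of $U_j$; the composition $U_j\twoheadrightarrow B'\twoheadrightarrow X$ is then nonzero, and since $\dim\Hom_H(U_j,X)\leq 1$ every map $U_j\to X$ factors through $B\to X$. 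Alternatively, your route can be completed by adding sincerity: if the quasi-socle $S$ of $\tau U_j$ lay in the segment of $\W_Z$, then (using $U_j\notin\W_Z$) one checks that every summand $N$ of $X\amalg Y$ whose set of quasi-composition factors contains $S$ satisfies $\Hom_H(N,\tau U_j)\neq 0$; since the tilting object $X\amalg Y$ of $\W_Z\simeq\mod\Lambda_t$ is sincere, some summand does contain $S$, contradicting rigidity. Hence the quasi-socle of $\tau U_j$ lies outside the segment, and only then does $\Hom_H(X^{\ast},\tau U_j)=0$ follow from $X^{\ast}\in\W_Z$. Either supplement would make your argument work; without one of them the core case is unproved.
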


\begin{proof}
We prove (a), the proof of (b) is similar.
Let $U = U_p \amalg U_r$ where $U_p$ is preprojective and $U_r$
is regular. By assumption $U_r$ has no summands in $\W_{Z}$. 

We have $\Hom_{H}(U_p, B) \to \Hom_{H}(U_p, X)$ is surjective since 
$\Ext_{H}^1(U_p, X^{\ast}) \simeq D\Hom_{H}(\tau^{-1} X^{\ast},  U_p)= 0$. 

We claim that $\Hom_{H}(U_r, B) \to \Hom_{H}(U_r, X)$ is also surjective.
For this note that by the AR-structure of the tube,
there is an indecomposable direct summand $B_0$ in $B$
such that the restriction $B_0 \to X$ is surjective.
Let $U_r'$ be a summand in $U_r$ such that $\Hom_{H}(U_r', X) \neq 0$.
By assumption
$\Hom_{H}(U_r', \tau X) = 0$, since
$\Hom_{H}(U_r', \tau X) \simeq D\Ext^1(X,U_r')$.
Hence any map $U_r' \to X$ is either an epimorphism or a monomorphism. 
Since $U_r'$ is not in $\W_{Z}$, it is also not in $\W_{X}$, and it follows that any non-zero map
$U_r' \to X$ is an epimorphism, and hence
factors through $B_0 \to X$, and the claim
follows. Hence $B \to X$ is a minimal right
$\add (U \amalg Y)$-approximation. This completes the proof of (a).
\end{proof}

\begin{lemma}\label{l:tau-wing}
Let $\T$ be a tube such that $T$ has a summand $M$, lying in $\T$.
Assume $\ql M$ is maximal among the direct summands of $T$ in $\T$.
By Lemma~\ref{l:wing-tilt}, we have that
$\add \tau T \cap \W_{\tau M} = \add \tau T'$ 
for a tilting object $\tau T'$ in $\W_{\tau M}$.
Let $T= T' \amalg T''$. Then we have:
\begin{itemize}
\item[(a)] All tilting objects in $\W_{\tau M}$ are complements of $\tau T''$.
\item[(b)] All objects in $\W_{\tau M}$ have a $T$-denominator.
\end{itemize}
\end{lemma}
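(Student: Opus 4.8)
The plan is to establish (a) first, by realizing mutations of tilting objects \emph{inside} the wing as honest cluster mutations, and then to deduce (b) from (a) using the compatibility criterion of Proposition~\ref{p:comp}(c).

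For (a), I would first observe that $\tau T = \tau T' \amalg \tau T''$ is again a cluster-tilting object (as $\tau$ is an autoequivalence of $\C$) with regular summand $\tau M$, so by Lemma~\ref{l:wing-tilt} its wing part $\tau T'$ is a tilting object in $\W_{\tau M}$; moreover $\tau M$, being projective-injective in $\W_{\tau M}$, is a summand of \emph{every} tilting object of the wing and so is never exchanged. Using Remark~\ref{r:transjective} I would pass to a derived-equivalent hereditary algebra $H'$ in which the finitely many transjective summands of $\tau T''$ are preprojective; since the tubes (hence $\W_{\tau M}$ and $\tau T'$) are intrinsic to $\C$, the object $\tau T$ becomes a tilting $H'$-module whose complement $\tau T''$ has no preinjective summand. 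This is precisely the setting of Lemma~\ref{l:app-extend} (with $Z = \tau M$, $U = \tau T''$): for any wing summand other than $\tau M$, the minimal right (resp.\ left) $\add$-approximation computed inside $\W_{\tau M}$ is already an approximation with respect to the full complement, so cluster-mutating $\tau T$ at that summand agrees with the wing mutation and leaves $\tau T''$ fixed. Since the tilting objects of $\W_{\tau M} \cong \mod \Lambda_s$ (with $s = \ql M$, of Dynkin type $A$) are connected under tilting mutation, iterating shows that every tilting object $V$ of $\W_{\tau M}$ is the wing part of a cluster-tilting object $V \amalg \tau T''$, i.e.\ is a complement of $\tau T''$; this proves (a).

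For (b) the key preliminary computation is that $\Hom_{\C}(M, N) = 0$ for every $N \in \W_{\tau M}$: writing $\Hom_{\C}(M, N) = \Hom_H(M, N) \oplus D\Hom_H(N, \tau^2 M)$, both summands vanish by Lemma~\ref{l:homs}(c) (and its proof), since $N \in \W_{\tau M}$. I would then run an induction over the family $\{V \amalg \tau T''\}$ produced in (a), via Proposition~\ref{p:comp}(c). The base case is $\tau T = \tau T' \amalg \tau T''$, all of whose summands have the form $\tau T_i$ and so correspond to the initial variables $y_i$, which have $T$-denominators by definition (case (II)). For the inductive step I mutate at a wing summand $V_i \neq \tau M$; by (a) the new summand $V_i^{\ast}$ and the middle terms $B, B'$ of both exchange triangles again lie in $\W_{\tau M}$, so it remains only to check that each summand $T_j$ of $T$ is compatible with $(V_i, V_i^{\ast})$. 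The $\C$-brick summands (in particular all transjective ones) are exchange compatible by Proposition~\ref{p:tamecompatible}; for $M$ itself the vanishing above makes all four relevant $\Hom$-spaces zero, so the compatibility equality reads $0 + 0 = \max(0,0)$; and for any non-brick summand lying in a tube different from $\T$ the four $\Hom$-spaces again vanish, since regular modules in distinct tubes have no homomorphisms. Hence Proposition~\ref{p:comp}(c) gives $x_{V_i^{\ast}}$ a $T$-denominator, and since every indecomposable of $\W_{\tau M}$ occurs as a summand of some wing tilting object reachable from $\tau T'$, every object of $\W_{\tau M}$ acquires a $T$-denominator.

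The hard part is (a): one must ensure that mutating the \emph{cluster}-tilting object $\tau T$ at a wing summand does not leave the wing, which is exactly what Lemma~\ref{l:app-extend} buys, provided one has first arranged (via the derived equivalence) that $\tau T''$ is preinjective-free and that $\tau M$ --- the one wing summand common to all wing tilting objects --- is never mutated. Once (a) is secured, (b) is comparatively routine: every compatibility condition collapses either to an instance of exchange compatibility or to a trivial equality between zeros forced by the vanishing $\Hom_{\C}(M, N) = 0$ on the wing.
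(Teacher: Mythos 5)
Your proof is correct and takes essentially the same route as the paper's: pass to a derived-equivalent $H'$ making $\tau T''$ preinjective-free, use Lemma~\ref{l:app-extend} to realize wing exchanges as exchange triangles in $\C$ together with connectivity of tilting objects in $\W_{\tau M}$ under mutation to get (a), then induct via Proposition~\ref{p:comp}(c), checking compatibility via Proposition~\ref{p:tamecompatible} for $\C$-brick summands and via the vanishing of $\Hom_{\C}$ on the wing (or on other tubes) for the non-brick summands. The only differences are cosmetic: you spell out the base case of the induction, the fact that $\tau M$ is projective-injective in the wing and hence never mutated, and the vanishing $\Hom_{\C}(M,N)=0$ for $N\in\W_{\tau M}$ via Lemma~\ref{l:homs}(c), all of which the paper leaves implicit.
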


\begin{proof}
Note that there is a hereditary algebra $H'$, with $\C_{H'} = \C_H$, such that 
$\tau T''$ as a $H'$-module has only regular and preprojective direct summands
(see Remark~\ref{r:transjective}).
Assume $\ql M \leq t$, and that the rank of $\T$ is $t+1$.
Let $U =\tau T' = \tau N_1 \amalg \dots \amalg \tau N_{t-1} \amalg \tau M$ be
the tilting object in $\W_{\tau M}$.  

Using Proposition~\ref{old} and Lemma~\ref{l:move},
we have that all tilting objects in $\W_{\tau M}$ can be
reached from $U$ by a finite number of exchanges, given by exchange
sequences in $\W_{\tau M}$.
Using Lemma~\ref{l:app-extend} these exchange sequences are also exchange
sequences in $\mod H'$ and hence in $\C_{H'} = \C_H$. 
This shows (a).
For (b) it suffices to show that each such exchange pair is compatible with
$T$. Consider the exchange triangle
\begin{equation}\label{eqx}
X' \to \bar{X} \to X'' \to.
\end{equation}

By Proposition~\ref{p:tamecompatible}, the pair $(X',X'')$ is compatible
with all summands in $T$ which are $\C$-bricks.
It is also compatible with any regular summand $T_j$ of $T$
with $\T_{T_j} \neq \T$, since $\Hom(T_j,\ )$ vanishes on all terms of the
sequence.
By Lemma~\ref{l:homs}(a) we only need to consider compatibility with
summands of $T$ which lie in $\T$ and have quasilength $t$. By
Lemma~\ref{l:homs}(b), $T$ has at most one such summand. Since $M$ is
assumed to have maximal quasilength amongst indecomposable direct summands
of $T$ in $\T$, if $T$ has such a summand, it must be $M$. But, since the
exchange triangle~\eqref{eqx} lies inside $\W_{\tau M}$, 
we see that $\Hom(M,\ )$ vanishes when applied to \eqref{eqx}. This
finishes the proof of (b).
\end{proof}

\begin{lemma}\label{l:to-trans}
Let $X$ be an exceptional regular indecomposable object of $\C$ which is a
$\C$-brick.
\begin{itemize}
\item[(a)] An exchange pair $(X,Z)$ is compatible with any regular
object $M$ for which either $M$ is a $\C$-brick, or $\T_M \neq \T_X$, or
$X \in \W_{\tau M'}$, where $M' \to M$ is an irreducible monomorphism.

\item[(b)]  
There is an exchange triangle of the form $Y \to Q \amalg X' \to X \to$
where $X' \to X$ is an irreducible monomorphism in case $\ql X >1$ and
$X'=0$ otherwise, with the property that $Y$ and $Q$ are transjective.
\end{itemize}
\end{lemma}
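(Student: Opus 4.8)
The plan is to prove Lemma~\ref{l:to-trans} in two parts, handling the compatibility statement (a) first since part (b) is essentially a construction that produces an exchange triangle of the required shape.

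\medskip

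For part (a), the goal is to verify Definition~\ref{d:excomp} for the exchange pair $(X,Z)$ against each type of regular object $M$ in the list. The cleanest route is to reduce to Proposition~\ref{p:comp}(a), i.e.\ to show that one of the two $\Hom$-sequences~\eqref{firstsequence} or~\eqref{secondsequence} is exact. First I would dispose of the three easy cases. If $M$ is a $\C$-brick, compatibility is immediate from Proposition~\ref{p:tamecompatible}, since a $\C$-brick (satisfying $\End_{\C}(M)\simeq k$) is exchange compatible, hence compatible with every exchange pair. If $\T_M \neq \T_X$, then by the facts recalled in Section~2 about vanishing of $\Hom$-spaces between different tubes (and between regular and transjective objects), $\Hom_{\C}(M,\ )$ should vanish on the relevant terms of the exchange triangle, making the sequence trivially exact. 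The genuinely interesting case is when $\T_M = \T_X$, $M$ is not a $\C$-brick (so $\ql M = t-1$ in a tube of rank $t$ by Lemma~\ref{l:homs}(a)), and $X \in \W_{\tau M'}$ where $M' \to M$ is the irreducible monomorphism. Here I would use the wing structure: since $X$ lies in $\W_{\tau M'}$, which sits inside $\W_{\tau M}$, I expect $\Hom_H(M,X)$ and the related $\Ext$-groups to behave as in Lemma~\ref{l:homs}(c), forcing one of the two sequences to be exact. This last case is the main obstacle, and the key is to track exactly where $X$ and $Z$ sit relative to $\W_{\tau M}$ and to apply the $\Hom$-$\Ext$ duality $\Hom_{\D}(M,\tau^{-1}N[1]) \simeq D\Hom_{\D}(N,\tau^2 M)$ from the proof of Lemma~\ref{l:homs}.

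\medskip

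For part (b), the plan is to construct the exchange triangle explicitly. Since $X$ is an exceptional regular $\C$-brick with $\ql X = s$ (say), I would first consider the wing $\W_X$ and build a partial tilting module inside it. When $\ql X > 1$, there is an irreducible monomorphism $X' \to X$ with $\ql X' = s-1$, and $X'$ together with the lower layers of the wing gives a partial tilting module. I would then invoke Lemma~\ref{l:bon-reg}: the Bongartz complement of the wing-tilting module $X_1 \amalg \cdots \amalg X_{s-1} \amalg X_s$ is a preprojective module $Q$, and by Lemma~\ref{l:bon-gen}(a) this $Q$ generates $X$. The exact sequence realizing the exchange of $X$ against the object $X'$ (the penultimate layer) inside the tilting module, together with the Bongartz complement $Q$, should assemble into an exchange triangle $Y \to Q \amalg X' \to X \to$ via Proposition~\ref{old}(c). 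The point is that the middle term is the minimal right approximation of $X$ by the complement, and since $Q$ is preprojective (hence transjective) and the mapped-out term $Y$ arises as the kernel/cone of a map between transjective objects, $Y$ is transjective as well. In the $\ql X = 1$ case, $X' = 0$ and the triangle degenerates to $Y \to Q \to X \to$, where $Q$ is simply a preprojective complement generating $X$, obtainable directly from the Bongartz construction of Lemma~\ref{l:bon-reg} applied to the single module $X$.

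\medskip

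The main obstacle I anticipate is verifying that $Y$ is genuinely transjective in part (b): one must check that forming the cone (or kernel) does not reintroduce a regular summand. The safeguard here is the directedness of the preprojective component (as used in Lemma~\ref{l:move} and Lemma~\ref{l:trans_comp}) together with the fact that $Q$ generates $X$, so that $Y$ fits into a short exact sequence $0 \to Y \to Q \amalg X' \to X \to 0$ of modules over a suitable derived-equivalent $H'$; tracking that $Y$ has no regular summands requires knowing that the approximation map is surjective with transjective kernel, which follows from Lemma~\ref{l:bon-gen}(a) and the wing analysis of Lemma~\ref{l:bon-reg}. For part (a), the subtle point is getting the direction of exactness right in the final case, which I would resolve by carefully applying the functorial $\Hom$-$\Ext$ duality rather than by direct dimension-counting.
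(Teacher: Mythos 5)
Your strategy coincides with the paper's in outline (part (a): Proposition~\ref{p:tamecompatible} for $\C$-bricks plus $\Hom$-vanishing for the remaining cases; part (b): Bongartz complement, minimal right approximation, Proposition~\ref{old}(c)), but in both parts the step you yourself flag as the crux is left unproven, and in (b) your stated justification is false. In (a), the case you call ``genuinely interesting'' ($\T_M=\T_X$, $M$ not a $\C$-brick, $X\in\W_{\tau M'}$) requires no duality argument and has no ``direction of exactness'' to resolve: since $M'\to M$ is an irreducible monomorphism, $X\in\W_{\tau M'}$ forces both $X\in\W_{\tau M}$ and $\tau^{-1}X\in\W_{\tau M}$, so Lemma~\ref{l:homs} gives $\Hom_{\C}(M,X)=0=\Hom_{\C}(M,\tau^{-1}X)$; applying $\Hom_{\C}(M,\ )$ to the exchange triangle $Z\to Q'\to X\to$ then makes the sequence~\eqref{firstsequence} exact for trivial reasons, exactly as in your different-tube case (which only needs the same two vanishings, not vanishing ``on the relevant terms'' generally, since $\Hom_{\C}(M,Z)$ and $\Hom_{\C}(M,Q')$ need not vanish). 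Your proposal stops at ``I expect \dots\ forcing one of the two sequences to be exact,'' which is the entire content of the case; without the wing inclusions above you have not proved it.

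In (b), your construction is the paper's, but the transjectivity of $Y$ is exactly where your argument breaks. You claim that ``$Y$ arises as the kernel/cone of a map between transjective objects'': this is false, since the middle term contains the regular summand $X'$ and the target $X$ is regular. Your fallback sentence is circular: you say that showing $Y$ has no regular summands ``requires knowing that the approximation map is surjective with transjective kernel,'' i.e.\ you assume the conclusion. What actually works is the following. The minimal right $\add E$-approximation $E'\to X$ (where $E$ is the Bongartz complement of $X$) is surjective because $E$ generates $X$ by Lemma~\ref{l:bon-gen}(a); by Lemma~\ref{l:bon-reg}(a) the regular summands of $E$ are $X_1,\ldots,X_{s-1}$, and every map from these to $X$ factors through the monomorphism $X'\to X$, hence is not surjective, so $E'$ must contain a nonzero preprojective summand $Q'$. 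Finally $Y$ is indecomposable (it is the second complement in an exchange pair), and if it were regular the inclusion $Y\to Q'\amalg X'$ would have zero component into $Q'$ (there are no nonzero maps from regular to preprojective modules), so $Y$ would embed in $X'$ and then $X\simeq Q'\amalg(X'/Y)$ would be decomposable, a contradiction; alternatively, additivity of the defect gives $\partial(Y)=\partial(Q')<0$, so $Y$ is preprojective. With these two repairs your outline becomes the paper's proof.
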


\begin{proof}
(a) If $M$ is a $\C$-brick then this holds by
Proposition~\ref{p:tamecompatible}.
For the other cases note that $\Hom(M,X) = 0 = \Hom(M, \tau^{-1} X)$, and
hence when $\Hom(M, \ )$ is applied to the exchange triangle
$Z \to Q' \to X \to $, one obtains a short exact sequence.

For (b), let $X_1,\ldots ,X_t=X$ be regular exceptional indecomposable
modules such that there is an irreducible monomorphism
$X_i\to X_{i+1}$ for all $i$. Let $Q$ be the
preprojective complement of $X_1\amalg \cdots \amalg X_t$ provided by
Lemma~\ref{l:nicepreprojectivecomplement}, and let
$\overline{U}=Q\amalg X_1 \amalg \cdots \amalg X_{t-1}$.
Consider the minimal right $\add \overline{U}$-approximation
$U' \to X$ (as $H$-module). Since $Q$ generates $X$, so does
$\overline{U}$, so the approximation is surjective, and we have a short
exact sequence $0\to Y\to U' \to X\to 0$ and thus an induced approximation
triangle,
$Y \to U' \to X \to$ in $\C$.
Since all maps from $X_i$ to $X$ (with $1\leq i\leq t-1$)
factor through a non-zero map
$X_{t-1}\to X$ (taking $X_0=0$), $X'=X_{t-1}$ is the only regular
summand of $U'$ and the other summands are preprojective.
Since no non-zero map $X_{t-1}\to X$ is surjective,
while $U'\to X$ is surjective, $U'$ must have a preprojective
summand, and it follows that $Y$ is preprojective.
\end{proof}

We now deal with the transjective objects.

\begin{prop}\label{l:trans}
All transjective objects have a $T$-denominator.
\end{prop}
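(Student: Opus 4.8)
The plan is to establish that every transjective object has a $T$-denominator by invoking the reduction machinery of Section~3, which says (Proposition~\ref{p:one_enough}) that it suffices to exhibit a \emph{single} transjective cluster-tilting object all of whose indecomposable summands have a $T$-denominator. So the whole problem reduces to constructing one good transjective cluster-tilting object and verifying its summands are fine. The natural candidate to build is one that interacts as simply as possible with the troublesome regular summands of $T$ (those that are not $\C$-bricks, i.e. of quasi-length one less than the rank of their tube), since those are precisely what prevents the naive $T$-denominator result of~\cite{bmr2} from applying directly.

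First I would reduce, using Remark~\ref{r:transjective}, to a derived-equivalent hereditary algebra $H'$ in which the relevant transjective objects are preprojective modules; this lets me work with honest tilting modules and short exact sequences rather than general triangles. Next I would assemble a transjective cluster-tilting object summand by summand and check each summand has a $T$-denominator. The key tool for the verification is the compatibility criterion: by Proposition~\ref{p:comp}(c), a mutation produces a new cluster variable with a $T$-denominator provided every summand $T_i$ of $T$ is compatible with the relevant exchange pair. Thus I would propagate $T$-denominators along a chain of mutations starting from a slice (as in Lemmas~\ref{l:knitting} and~\ref{l:getting_back}), using Lemma~\ref{l:AR} to handle AR-type exchange pairs and Lemma~\ref{l:trans_comp} to handle exchange pairs with both ends transjective. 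The crucial point is that in Lemma~\ref{l:trans_comp} \emph{any} regular indecomposable exceptional $M$ is automatically compatible with a transjective exchange pair, so the dangerous non-$\C$-brick regular summands of $T$ cause no trouble for transjective mutations; and the $\C$-brick summands are handled by Proposition~\ref{p:tamecompatible} (exchange compatibility).

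Concretely, the cleanest route is: exhibit a slice $V=\amalg_i V_i$ (the image of $H'$, which forms a slice in $\C_H$) and argue that each $V_i$ has a $T$-denominator, then apply Lemma~\ref{l:knitting} to conclude. The summands of a slice are transjective, so every exchange pair arising among them and their mutates is a transjective exchange pair or an AR-pair, to which Lemmas~\ref{l:trans_comp} and~\ref{l:AR} apply, giving compatibility with \emph{all} summands of $T$. Feeding this compatibility into Proposition~\ref{p:comp}(c) and inducting along the sequence of exchanges from Lemma~\ref{l:move} propagates the $T$-denominator property across the whole transjective component. Since the preprojective (transjective) component is directed and reachable from $H'$ by finitely many such exchanges, this covers every transjective indecomposable object.

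The main obstacle I anticipate is the very first step of \emph{initializing} the induction: one needs at least one transjective cluster-tilting object whose summands are known to have $T$-denominators, and this cannot simply be assumed, since $T$ itself may have a bad regular summand. The resolution is that the compatibility of transjective exchange pairs with regular $M$ (Lemma~\ref{l:trans_comp}) together with compatibility of $\C$-bricks (Proposition~\ref{p:tamecompatible}) means that once \emph{some} base case has $T$-denominators, the property is self-propagating through transjective mutations regardless of the regular defect in $T$; so the real content is checking the base case, for which the slice $V$ coming from $H'$ works because its summands are preprojective modules whose denominators are controlled by Theorem~\ref{bmrtsecond} and the dimension-vector description of~\cite{bmrt,ck2}. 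In short, I expect the proof to be quite short, consisting essentially of the sentence ``combine Proposition~\ref{p:one_enough} with the compatibility Lemmas~\ref{l:AR}, \ref{l:trans_comp}, and~\ref{l:knitting}'' once the base transjective cluster-tilting object is identified.
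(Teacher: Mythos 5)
Your reduction to Proposition~\ref{p:one_enough} and your propagation mechanism (Lemmas~\ref{l:AR}, \ref{l:trans_comp}, Proposition~\ref{p:comp}(c)) match the paper, and you correctly identify that everything hinges on initializing the induction with one transjective cluster-tilting object whose summands are already known to have $T$-denominators. However, your resolution of that base case is wrong. You claim the slice $V$ coming from $H'$ works ``because its summands are preprojective modules whose denominators are controlled by Theorem~\ref{bmrtsecond} and the dimension-vector description of~\cite{bmrt,ck2}.'' Those results give denominators of cluster variables with respect to the \emph{standard acyclic initial seed} (equivalently, a cluster-tilting object with no regular summands), i.e.\ they control $c_X=\prod_i x_i^{\dim\Hom_{\C}(P_i,X)}$ in the variables $x_i$. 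A $T$-denominator is a statement about the expansion in the variables $y_i=x_{\tau T_i}$ of the \emph{arbitrary} cluster $\mathbf{y}$, where $T$ may have a regular summand that is not a $\C$-brick --- and in exactly that situation the theorem of~\cite{bmr2} quoted in Section~1 says that \emph{not} every cluster variable has a $T$-denominator. So there is no prior result to cite for the slice; proving that its summands have $T$-denominators is precisely the content of the proposition, and your argument is circular at this point.

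The paper resolves the base case in the opposite direction: the only cluster whose variables trivially have $T$-denominators is $\mathbf{y}$ itself, since $\alpha(y_i)=\tau T_i$ falls under clause (II) of the definition. The proof therefore starts from $\tau T$, uses Lemma~\ref{l:tau-wing} to replace the summands lying in each wing $\W_{\tau M_i}$ by the objects with monomorphisms to $\tau M_i$ (all still having $T$-denominators), and then exchanges the remaining regular summands $\tau N_t,\dots,\tau N_1$ one at a time for transjective complements. This requires two nontrivial verifications that your proposal has no substitute for: (1) the new complement $(\tau N)^{\ast}$ is in fact transjective, proved by analyzing which of the two exchange triangles is induced by a short exact sequence of modules and showing the middle term has a preprojective (resp.\ preinjective) summand; and (2) every summand of $T$ is compatible with the exchange pair $(\tau N,(\tau N)^{\ast})$ --- where the dangerous non-$\C$-brick summand $N$ itself is handled not by Lemma~\ref{l:trans_comp} (which does not apply, since $\tau N$ is regular, not transjective) but by Definition~\ref{d:excomp}: compatibility is automatic because one member of the exchange pair equals $\tau N$. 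Only after this outward mutation produces a fully transjective cluster-tilting object with $T$-denominators does Proposition~\ref{p:one_enough} (your slice-and-knitting machinery) take over.
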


\begin{proof}
By Proposition~\ref{p:one_enough} it is sufficient to show that
that there is one transjective cluster-tilting object all of whose
indecomposable direct summands have $T$-denominators.
Without loss of generality we can assume that $T$ has at least one
indecomposable direct summand which is not a $\C$-brick.

Assume $T = Q \amalg R$, where $Q$ is transjective and $R$ is regular.
Then, using Lemma \ref{l:wing-tilt}, there are indecomposable summands
$M_1, \dots, M_z$ of $R$ such that
each summand of $R$ lies in one of the wings $\W_{M_i}$. We choose
a minimal such set of summands.
Since $\Ext^1_{\C}(M_i,A)\not=0$ for any object $A$ whose wing
overlaps $\W_{M_i}$, any two of the $\W_{M_i}$ must be either equal
or disjoint.

By definition, all summands of $\tau T$ have $T$-denominators.
By Lemma~\ref{l:tau-wing}, we can, for each $i$, replace the summands
of $\tau T$ in $\W_{\tau M_i}$ with the indecomposable objects in the
tube of $M_i$ which have a monomorphism to $\tau M_i$.
We obtain a new cluster-tilting
object $U=(\amalg_{i=1}^z \tau M_i) \amalg U'$ all of whose indecomposable
direct summands have $T$-denominators.

Suppose $M_1$ has quasilength $t$ and let $N_1,N_2,\ldots ,N_t=M_1$ be the
indecomposable objects in $\T_{M_1}$ with monomorphisms to $M_1$, where
$\ql(N_i)=i$ for all $i$. Then we can write $U=(\amalg_{i=1}^t \tau N_i)
\amalg Y$.
We claim that, via a sequence of exchanges, the $\tau N_i$ can be replaced
by transjective summands $Q_i$ which have $T$-denominators.
When repeating this for $M_1,M_2,\ldots ,M_z$, we will end up with a
transjective cluster-tilting object having $T$-denominators as required.

We exchange $\tau M_1$ with a complement $(\tau M_1)^{\ast}$, via the
exchange triangles:
$$(\tau M_1)^{\ast} \to B \to \tau M_1 \to$$
$$\tau M_1 \to B' \to (\tau M_1)^{\ast} \to.$$
\textbf{Claim:} The object $(\tau M_1)^{\ast}$ is transjective. \\
If $(\tau M_1)^{\ast}$ is not induced by an $H$-module, it is induced by the
shift of a projective module, and we are done.
So we can assume that $(\tau M_1)^{\ast}$ is induced by a module.
Then one of these two exchange triangles must arise from a short exact
sequence of modules.

If it is the first, then clearly $\Hom_H(X,\tau M_1)=0$ for any regular
summand $X$ of $U$ not in $\T_{M_1}$. But if $X$ lies in $\T_{M_1}$ and not in
$\W_{\tau {M_1}}$, again $\Hom_H(X,\tau M_1)=0$ since the wings
$\W_{\tau M_i}$ do not overlap (and $\ql(M_i)$ is less than the rank of its
tube for all $i$). Let $N_0=0$. Since $\tau N_{t-1}$ does not
generate $\tau N_t=\tau M_1$, it follows that $B$ has a nonzero
preprojective summand, and hence that $(\tau M_1)^{\ast}$ is preprojective.

If it is the second, then clearly $\Hom_H(\tau M_1,X)=0$ for any regular
summand $X$ of $U$ not in $\T_{M_1}$. But if $X$ lies in $\T_{M_1}$ and not in
$\W_{\tau M_1}$, again $\Hom_H(\tau M_1,X)=0$ since the wings
$\W_{\tau M_i}$ do not overlap. Since \linebreak
$\Hom_H(\tau M_1,\tau N_j)=0$ for all $j$,
it follows that $B'$ has a nonzero preinjective summand, and hence that
$(\tau M_1)^{\ast}$ is preinjective.

Hence, in either case, $(\tau M_1)^{\ast}$ is transjective. We next show that
$(\tau M_1)^{\ast}$ has a $T$-denominator, by considering two cases:

\noindent CASE I: We assume first that $\End(M_1)  = k$,
i.e.\ $M_1$ is a $\C$-brick.

Every summand of $T$ in $\T= \T_{M_1}$ is a $\C$-brick (by the choice of
the $M_i$), so by Lemma \ref{l:to-trans}(a) we obtain that the
exchange pair $(\tau M_1,(\tau M_1)^{\ast})$ is compatible with all summands of
$T$, and hence that $(\tau M_1)^{\ast}$ has a $T$-denominator by
Proposition~\ref{p:comp}. We then repeat this procedure for
$\tau N_{t-1}, \dots, \tau N_1$.
   
\noindent CASE II: $\End(M_1) \neq k$, i.e.\ $M_1$ is not a $\C$-brick.
Arguing as above, we see that we can exchange $\tau M_1$ with a transjective
object $(\tau M_1)^{\ast}$. 
$M_1$ is compatible with the exchange pair $(\tau M_1,(\tau M_1)^{\ast})$ by
definition. The other direct summands in $T$ are either
$\C$-bricks, or they are in other tubes. In both cases they are compatible with $(\tau M_1,(\tau M_1)^{\ast})$.  
Hence $T$ is compatible with that exchange pair. 
So $(\tau M_1)^{\ast}$ has a $T$-denominator by
Proposition~\ref{p:comp}. We can then exchange the other summands
$\tau N_{t-1}, \dots , \tau N_{1}$ with transjectives by Lemma \ref{l:to-trans}(b). By the last assertion of
Lemma \ref{l:to-trans}(a), each exchange pair is compatible with $M_1$.
As in case I, they are also compatible with
the other direct summands of $T$. 
Hence, we obtain a transjective cluster-tilting object having a
$T$-denominator, and we are done.
\end{proof}

\begin{lemma}
Let $\T$ be a tube such that each direct summand of $T$ lying in $\T$ is a
$\C$-brick, or such that $\T$ has no summands in $T$. Then each exceptional
indecomposable object in $\T$ has a $T$-denominator.
\end{lemma}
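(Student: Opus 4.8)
The plan is to reduce the statement, via Proposition~\ref{p:one_enough}-style reasoning already available for the transjective case in Proposition~\ref{l:trans}, to showing that the regular objects of the tube $\T$ can be reached from the summands of $\tau T$ lying in $\T$ by a sequence of exchanges, each of which is compatible with every summand of $T$. Since the hypothesis guarantees that every summand of $T$ lying in $\T$ is a $\C$-brick (or that $\T$ has no summand of $T$ at all), the main input will be Proposition~\ref{p:tamecompatible}, which tells me that an object is exchange compatible precisely when its endomorphism ring is $k$. However, $\Hom(M,-)$ may still fail to be exact for summands $M$ of $T$ that are \emph{not} $\C$-bricks but lie in \emph{other} tubes, so I must control those too; fortunately Lemma~\ref{l:to-trans}(a) is designed for exactly this situation.

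First I would fix an exceptional indecomposable object $X$ in $\T$ and show I may assume $X$ is a $\C$-brick: if $X$ is not a $\C$-brick then by Lemma~\ref{l:homs}(a) it has maximal quasilength $t-1$ in a tube of rank $t$, but then the hypothesis forbids $X$ from being a summand of $T$, and such an $X$ is handled by combining the wing analysis with the already-established Proposition~\ref{l:trans}; alternatively, I would treat the $\C$-brick objects directly and deduce the remaining ones afterwards. So assume $X$ is a $\C$-brick. By Lemma~\ref{l:to-trans}(b) there is an exchange triangle
\begin{equation*}
Y \to Q \amalg X' \to X \to
\end{equation*}
with $Y$ and $Q$ transjective and $X' \to X$ an irreducible monomorphism (with $X'=0$ when $\ql X = 1$). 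Here $X'$ is regular of strictly smaller quasilength, and $Q,Y$ are transjective, so by Proposition~\ref{l:trans} the objects $Q$ and $Y$ already have $T$-denominators; by induction on quasilength $X'$ has a $T$-denominator as well.

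The crux is then to verify that the exchange pair $(X, Z)$ producing this triangle — where $Z$ is the partner complement — is compatible with \emph{every} summand $T_i$ of $T$, since Proposition~\ref{p:comp}(c) converts compatibility into the desired $T$-denominator for $X$. I would split the summands $T_i$ into three groups. If $T_i$ is a $\C$-brick, compatibility is immediate from Proposition~\ref{p:tamecompatible}. If $T_i$ is regular but lies in a tube different from $\T$, then $\Hom(T_i,-)$ annihilates all terms of the exchange triangle (regular objects in distinct tubes have no homomorphisms, and the transjective terms are also killed after passing to a suitable derived-equivalent $H'$ as in Remark~\ref{r:transjective}), so compatibility is automatic. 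The only delicate case is a non-$\C$-brick summand $T_i$ in the \emph{same} tube $\T$, but the hypothesis of the lemma rules this out: every summand of $T$ in $\T$ is assumed to be a $\C$-brick.

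\textbf{The main obstacle} I anticipate is confirming compatibility with summands that are non-$\C$-bricks living in \emph{other} tubes, because there $\Hom(T_i,-)$ need not vanish a priori and Proposition~\ref{p:tamecompatible} does not apply. Here I would invoke Lemma~\ref{l:to-trans}(a), whose hypotheses ($\T_M \neq \T_X$) cover precisely this configuration and guarantee that applying $\Hom(T_i,-)$ to the exchange triangle yields a short exact sequence, hence compatibility via Proposition~\ref{p:comp}. With all three groups handled, Proposition~\ref{p:comp}(c) yields that $X$ has a $T$-denominator, completing the induction on quasilength and thereby establishing the statement for all exceptional indecomposable objects in $\T$.
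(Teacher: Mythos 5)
Your main line of argument for $\C$-brick objects $X$ is essentially the paper's own: the paper proves the base case $\ql X=1$ exactly as you do (the exchange triangle of Lemma~\ref{l:to-trans}(b), compatibility split between $\C$-brick summands via Proposition~\ref{p:tamecompatible} and non-$\C$-brick regular summands in other tubes via Lemma~\ref{l:to-trans}(a), then Proposition~\ref{p:comp}(c)); for the inductive step the paper switches to Lemma~\ref{l:AR}, whereas you reuse Lemma~\ref{l:to-trans}(b) at every quasilength, which is a legitimate variant. The genuine gap is your opening reduction ``I may assume $X$ is a $\C$-brick.'' The lemma also covers the exceptional objects of quasilength $t-1$ in a tube of rank $t$ (the hypothesis restricts only the summands of $T$, not $X$, and such objects exist in every exceptional tube), and neither of your two proposed treatments of them works. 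The ``wing analysis'' (Lemma~\ref{l:tau-wing}, Lemma~\ref{l:seq}) applies only to objects lying in the wing $\W_{\tau M}$ of a summand $M$ of $T$ in $\T$; for your $X$ of maximal exceptional quasilength this would force $\ql M = t-1$, i.e.\ $M$ a non-$\C$-brick summand of $T$ in $\T$, which is exactly what the hypothesis excludes. ``Treating the $\C$-bricks first and deducing the rest'' has no mechanism behind it: Lemma~\ref{l:to-trans} is \emph{stated} only for $\C$-bricks $X$, and the paper's alternative tool, Lemma~\ref{l:AR}, is also unavailable here, since $(X,\tau X)$ is not an exchange pair when $X$ is not a $\C$-brick (by Lemma~\ref{l:homs}(c), $\Ext^1_{\C}(X,\tau X)\simeq\Hom_{\C}(X,\tau^2 X)$ is then two-dimensional, whereas exchange pairs have one-dimensional extensions). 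To close the gap one has to reopen the proofs rather than cite the statements: the Bongartz-complement construction proving Lemma~\ref{l:to-trans}(b) never uses the $\C$-brick hypothesis on $X$, and compatibility for non-$\C$-brick summands $T_i$ in other tubes needs only $\Hom_{\C}(T_i,X)=0=\Hom_{\C}(T_i,\tau^{-1}X)$, which holds across distinct tubes for any regular $X$.

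Two further points. First, Proposition~\ref{p:comp}(c) requires \emph{every} variable of the seed being mutated to have a $T$-denominator, i.e.\ all summands of the almost complete cluster-tilting object realizing the exchange; by Lemma~\ref{l:bon-reg}(a) this is the Bongartz complement $X_1\amalg\dots\amalg X_{t-1}\amalg Q_0$ of $X$, not merely the three objects $Y$, $Q$, $X'$ visible in the triangle. Your induction hypothesis together with Proposition~\ref{l:trans} does cover the remaining summands (they are exceptional of smaller quasilength in $\T$, or transjective), but this must be identified and said. Second, your claim that $\Hom(T_i,-)$ ``annihilates all terms of the exchange triangle'' for $T_i$ regular in another tube is false for the transjective terms: $\Hom_{\C}(T_i,Q)$ and $\Hom_{\C}(T_i,Y)$ are in general nonzero, because $\Hom_{\C}$ contains $F$-maps from regular to transjective objects, and passing to a derived-equivalent $H'$ does not change $\Hom_{\C}$ at all. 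This slip is harmless, since compatibility only requires the short exact sequence, which follows from the vanishing of $\Hom_{\C}(T_i,X)$ and $\Hom_{\C}(T_i,\tau^{-1}X)$ as in the proof of Lemma~\ref{l:to-trans}(a), but the reasoning should be corrected.
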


\begin{proof}
Let $X$ be an exceptional indecomposable object in $\T$.
We prove the Lemma by induction on the quasilength of $X$.

If $\ql X = 1$, then by Lemma~\ref{l:to-trans}(b) there is
an exchange triangle $Y \to Q  \to X \to$ with $Q$ and $Y$ transjective.
By Proposition~\ref{p:tamecompatible}, we need only show that
$(Y,X)$ is compatible with any regular non $\C$-brick
summand $M$ of $T$. But this follows from Lemma~\ref{l:to-trans}.

Now assume that any exceptional indecomposable object $Y$ of quasi-length
less than $t$ has a $T$-denominator. We want to show that the result
also holds for the exceptional indecomposable $X$ with $\ql X = t$.
For this we use Lemma \ref{l:to-trans}.
\end{proof}

It now remains to deal with the exceptional objects which are in
$\overline{\W_{\tau M}}$ for a non $\C$-brick summand $M$ of $T$. Here
$\overline{\W_{\tau M}}$ denotes the complement of the wing $\W_{\tau M}$ inside the tube $\T_M$ (i.e.\ the indecomposable objects in $\T_M$ which
are not in $\W_{\tau M}$).
For this the following lemma is crucial.

\begin{lemma}\label{l:seq}
For each indecomposable exceptional object $X^{\ast}$ in $\overline{\W_{\tau M}}$,
there are exchange triangles   
$$X^{\ast} \to B \to X \to$$
and
$$X \to B' \to X^{\ast} \to $$
such that 
\begin{itemize}
\item[(i)] \begin{multline*} \max(\dim \Hom (M,B), \dim \Hom(M,B')) \\ = \dim \Hom(M,X^{\ast}) + \dim \Hom(M,X)-1 \end{multline*} 
\item[(ii)] The object $X$ and all indecomposable summands of the
objects $B$ and $B'$ have $T$-denominators.
\item[(iii)] The object $X$ is induced by a preprojective module.
\end{itemize}
\end{lemma}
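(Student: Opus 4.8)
The plan is to construct the complement $X^{\ast}$ explicitly using the Bongartz machinery of Section~4, and then to verify the three properties by analysing how $\Hom_{\C}(M,\ )$ interacts with the resulting exchange triangles. The key structural input is that $X$ lies in $\overline{\W_{\tau M}}$ while $M$ is a non-$\C$-brick summand of $T$ lying in a tube $\T$ of rank $t+1$ with $\ql M = t$; thus $X$ may or may not lie in the wing $\W_{\tau M}$ itself. First I would fix notation by writing $\ql X = s$ and letting $X_1,\dots,X_{s-1},X_s = X$ be the indecomposable regular modules in $\T_X$ with irreducible monomorphisms $X_i \to X_{i+1}$, as in Lemma~\ref{l:bon-reg}. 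The essential first move is to invoke Lemma~\ref{l:good}: since $X \not\in \W_{\tau M}$ in the relevant case, there is a complement $N$ of $X$ in $\W_X$ all of whose summands lie in $\W_{\tau M}$, and the partial tilting module $X \amalg N$ has a preprojective Bongartz complement $Q$ which generates $X$.

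Next I would produce the two exchange triangles. Using Lemma~\ref{l:app-extend}, the minimal right and left $\add N$-approximations computed inside the wing $\W_X$ extend to $\add(Q \amalg N)$-approximations in $\mod H'$ (after passing to a derived-equivalent $H'$ over which the transjective summands are preprojective, via Remark~\ref{r:transjective}), hence give exchange sequences $X^{\ast} \to B \to X \to$ and $X \to B' \to X^{\ast} \to$ in $\C_{H'} = \C_H$. To secure property (iii), I would arrange that the exchange is performed against the summand that forces $X^{\ast}$ to be preprojective: because $Q$ generates $X$ and the sequence $0 \to X^{\ast} \to B \to X \to 0$ is the approximation sequence with $B$ having a preprojective summand, Proposition~\ref{old}(d) gives that $X^{\ast}$ is preprojective. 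For property (ii), the summands of $B$ and $B'$ are either the preprojective $Q$ or lie in $N \subseteq \W_{\tau M}$; the former have $T$-denominators by Proposition~\ref{l:trans} (all transjective objects do), and the latter have $T$-denominators by Lemma~\ref{l:tau-wing}(b); and $X^{\ast}$ itself is transjective, hence covered by Proposition~\ref{l:trans}.

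The heart of the argument, and the step I expect to be the main obstacle, is property (i): the $\dim$-drop by exactly $1$ when $\Hom_{\C}(M,\ )$ is applied. Here compatibility in the sense of Definition~\ref{d:excomp} must \emph{fail} by exactly one, which is precisely the non-brick phenomenon quantified in Proposition~\ref{p:comp}(b) and Lemma~\ref{l:homs}(c). The plan is to apply $\Hom_{\C}(M,\ )$ to both exchange triangles and measure the failure of exactness. Since $M$ is not a $\C$-brick with $\ql M = t$, the key numerical fact is $\dim\Hom_{\C}(M,N') = 2$ for any regular $N'$ in $\T$ with $N' \not\in \W_{\tau M}$ (Lemma~\ref{l:homs}(c)), and $\dim\Hom_{\C}(M,\ )$ behaves additively on the wing-interior objects. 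I would compute the Euler-characteristic-type alternating sum along each triangle: exactness would give $\dim\Hom(M,B) = \dim\Hom(M,X^{\ast}) + \dim\Hom(M,X)$, and the obstruction to exactness — a one-dimensional connecting map arising from the unique nonzero element of a space like $\Ext^1_{\C}(M,\text{-}) \simeq D\Hom_{\C}(\text{-},\tau M)$ detected precisely because $X \not\in \W_{\tau M}$ — lowers this by exactly $1$, yielding the maximum over the two triangles as stated. The delicate part is confirming that the defect is genuinely $1$ and not larger, which hinges on the rank-one $\Hom_H$-spaces in the tube (Lemma~\ref{l:homs}(c)(ii)) and on the single non-brick constraint from Lemma~\ref{l:homs}(b); I would handle the two cases $X \in \W_{\tau M}$ and $X \not\in \W_{\tau M}$ separately, the former reducing to the wing computation of Lemma~\ref{l:tau-wing} and the latter carrying the genuine $-1$ via the monomorphism $X' \to X$ supplied by Lemma~\ref{l:good}.
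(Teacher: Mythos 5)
Your opening move is the same as the paper's (invoke Lemma~\ref{l:good} to get a complement $N$ of $X$ in $\W_X$ with all summands in $\W_{\tau M}$, plus a preprojective Bongartz complement $Q$ generating $X$), and your verification of (ii) is essentially correct, but your construction of the exchange triangles is broken. You build $B\to X$ and $X\to B'$ as the minimal $\add N$-approximations computed \emph{inside the wing} $\W_X$ and invoke Lemma~\ref{l:app-extend} to promote them to $\add(Q\amalg N)$-approximations. This cannot work: $X$ is the projective-injective object of its own wing $\W_X$, so it is not generated by $\add N$, the minimal right $\add N$-approximation of $X$ is not surjective, and no exchange sequence for $X$ exists inside $\W_X$ at all --- Lemma~\ref{l:app-extend} presupposes precisely such a wing-internal exchange sequence, and its conclusion would in any case be false here (a surjection $Q\to X$ cannot factor through $\add N$, else $N$ would generate $X$). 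Your text is internally inconsistent on this point: your $B$ would lie in $\add N$, yet you then assert that $B$ has a preprojective summand in order to get (iii); the appeal to Proposition~\ref{old}(d) is also illegitimate, since the tilting module being mutated is not preprojective ($N$ is regular). The paper instead takes $B\to X$ and $X\to B'$ to be the minimal right/left $\add(Q\amalg N)$-approximations in $\mod H$: surjectivity of $B\to X$ comes from $Q$ generating $X$, which is what makes the first triangle come from a short exact sequence of modules and forces $X^{\ast}$ to be preprojective.

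Second, item (i), which you yourself flag as the heart of the matter, is only gestured at; no argument is actually given that the defect is exactly $1$, and the paper's argument cannot be run from your triangles because it depends on the precise shape of $B$ and $B'$ (preprojective part coming from $Q$, regular part lying in $\W_{\tau M}$). Concretely, the paper proves: (a) left-exactness of $0\to\Hom_{\C}(M,X^{\ast})\to\Hom_{\C}(M,B)$, by showing every $H$-map $\tau M\to X$ factors through $B$ and citing \cite[Lemma 5.1]{bmr2}; (b) $\dim\coker\bigl(\Hom_{\C}(M,B)\to\Hom_{\C}(M,X)\bigr)=1$, by splitting the two-dimensional space $\Hom_{\C}(M,X)$ (Lemma~\ref{l:homs}(c)) into the one-dimensional space of $H$-maps, which do \emph{not} factor through $B$ since $\Hom_H(M,N)=0=\Hom_H(M,Q)$, and the one-dimensional space of $F$-maps, which \emph{do} factor through $B$ (with a separate case depending on whether $X^{\ast}$ is projective); and (c) for the second triangle, the composition of the nonzero $F$-map $M\to X$ with $X\to B'$ is zero, so left-exactness fails and $\dim\Hom_{\C}(M,B')\le\dim\Hom_{\C}(M,X)+\dim\Hom_{\C}(M,X^{\ast})-1$, which together with (a) and (b) gives the stated maximum. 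Finally, your proposed case split ``$X\in\W_{\tau M}$ versus $X\notin\W_{\tau M}$'' rests on a misreading of the notation: $\overline{\W_{\tau M}}$ denotes the part of the tube outside the wing, so $X\notin\W_{\tau M}$ holds automatically in this lemma (this is also exactly the hypothesis Lemma~\ref{l:good} needs).
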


\begin{proof}
By Lemma~\ref{l:good}(a), there is an object $N$ in $\W_{X^{\ast}}$ such that
$N \amalg X^{\ast}$ is a tilting object in the wing $\W_{X^{\ast}}$ 
and all direct summands of $N$ are in $\W_{\tau M}$. 

By Lemma \ref{l:good}(b), we have that $N \amalg X^{\ast}$ has a preprojective
complement $Q$ in $\mod H$, such that $Q$ generates $X^{\ast}$.
Let $R = Q \amalg N$ and let $B' \to X^{\ast}$ (respectively, $X^{\ast} \to B$)
be the minimal right, (respectively, minimal left) $\add R$-approximations of
$X^{\ast}$.
We claim that the induced exchange triangles satisfy (i), (ii) and (iii).

Consider the exchange triangle 
$$X \to B' \to X^{\ast} \to.$$
Since $Q$ generates $X^{\ast}$ in $\mod H$, it is clear that this triangle is
induced by a short exact sequence in $\mod H$, and hence $X$ is
induced by a preprojective module (showing (iii)), since
$X\to B'$ is nonzero and $B'$ must have a preprojective
summand as $N$ doesn't generate $X^{\ast}$.

Apply $\Hom(M,\ )$ to obtain the long exact sequence
$$(M,\tau^{-1} X^{\ast}) \to (M,X) \to (M,B') \to (M,X^{\ast}) \to
(M,\tau X).$$

We claim that every $H$-map $\tau M \to X^{\ast}$ factors through $B'$. 
This follows from the configuration of $M$,$N$ and $X^{\ast}$ in the
Auslander-Reiten quiver of the tube, noting
that $\Hom_H(\tau M, N) \neq 0$ if and only if $\Hom(\tau M, X^{\ast}) \neq 0$
(if and only if $N\neq 0$).
Figure~\ref{f:seq} displays this case, when $N$ has a summand
$B'_0$ (occurring in $B'$) with $\Hom_H(\tau M,B'_0)\not=0$; compare with
Figure~\ref{f:good}.
By~\cite[Lemma 5.1]{bmr2} it follows that $(M,X) \to (M,B')$ is a
monomorphism.

\setlength{\unitlength}{0.2cm}
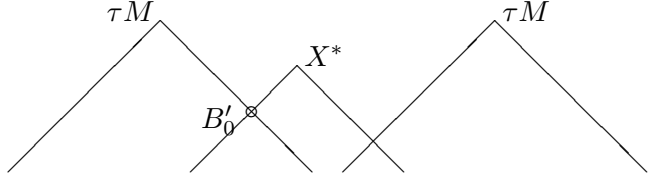
\begin{figure}
\begin{picture}(50,10)
\put(0,0){\line(1,1){10}}
\put(22,0){\line(1,1){10}}
\put(10,10){\line(1,-1){10}}
\put(32,10){\line(1,-1){10}}
\put(6.5,10){$\tau M$}
\put(32.5,10){$\tau M$}
\put(12,0){\line(1,1){7}}
\put(19,7){\line(1,-1){7}}
\put(19.5,7){$X^{\ast}$}
\put(12.7,2.9){$B'_0$}
\put(15.5,3.5){$\circ$}
\end{picture}
\caption{A summand $B_0$ of $N$ with $\Hom_H(\tau M,B_0)\neq 0$:
see proof of Lemma~\ref{l:seq}.}
\label{f:seq}
\end{figure}

We claim that $\dim \coker ((M,B') \to (M,X^{\ast})) =1$.
By Lemma \ref{l:homs}, we have that $\dim \Hom_H(M,X^{\ast}) = 1$
and it is clear that an $H$-map $M \to X^{\ast}$ will not factor through $B'$,
since $N$ is in $\W_{\tau M}$, and hence $\Hom_H(M,N) = 0$,
by Lemma~\ref{l:homs}. 

By Lemma~\ref{l:homs} the space of $F$-maps $M \to X^{\ast}$ is also
one-dimensional.
We claim that such $F$-maps will factor through $B'$. For this 
we consider two possible cases: the object $X$ is either induced by
a projective $H$-module $P$ or not. 
First assume that $X$ is non-projective. 
Since the composition of two $F$-maps is 0, it is clear that all $F$-maps
$M \to X^{\ast}$ will factor through $B' \to X^{\ast}$.
Hence the claim follows in this case.
Now consider the case where $X^{\ast}$ is projective.
Then the composition $M \to \tau^{-1}X^{\ast}[1] \to \tau^{-1}(P_i[1])[1]$ is
clearly zero, so the claim follows in this case.

We next want to show that when $\Hom(M,\ )$ is applied to the
second exchange triangle
$$X^{\ast} \to B \to X \to ,$$
we do not obtain an exact sequence. 
The map $X^{\ast} \to B$ decomposes into $X^{\ast} \to Q_0 \amalg N_0$, with
$Q_0$ preprojective and $N_0$ in $\W_{\tau M}$.
Hence $X^{\ast} \to Q_0$ is an $F$-map.
There is a non-zero $F$-map $M \to X^{\ast}$ and the composition
$M \to X^{\ast} \to B$ will be zero since $M \to X^{\ast} \to Q_0$ is the
composition of two $F$-maps and $\Hom(M, N_0)= 0$, since $M \in \W_{\tau M}$.

Hence we obtain (i), and (ii) follows using Lemmas~\ref{l:knitting}
and~\ref{l:tau-wing}(b), using the fact that $X$ and all
indecomposable summands of $B$ and $B'$ are either transjective or in
$\W_{\tau M}$, and noting that, by
Lemma~\ref{l:homs}(a), $M$ has maximal quasilength amongst the direct
summands of $T$ in $\T_M$.
\end{proof}

The proof of the following is an adaptation of parts the proof
of~\cite[Prop. 3.1]{bmrt}. It completes the proof of our main result,
Theorem A.

\begin{prop}
Let $\T$ be a tube such that $T$ has a non $\C$-brick summand $M$, lying in
$\T$. Then each object in $\overline{\W_{\tau M}}$ has a diminished
$T$-denominator.
\end{prop}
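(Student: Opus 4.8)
The plan is to prove that each indecomposable exceptional object $X$ in $\overline{\W_{\tau M}}$ has a \emph{reduced} $T$-denominator by induction on the quasilength of $X$, using the exchange sequences constructed in Lemma~\ref{l:seq} as the inductive engine. The key point distinguishing this case from the transjective case (Proposition~\ref{l:trans}) is that here the summand $M$ of $T$ lying in $\T$ fails to be compatible with the relevant exchange pair $(X,X^{\ast})$; instead, Lemma~\ref{l:seq}(i) tells us precisely that the two $\Hom(M,-)$-sequences each fail to be exact by exactly one dimension. This ``defect of $1$'' is exactly what produces the subtracted $1$ in the definition of a reduced $T$-denominator, so the whole argument hinges on tracking this discrepancy through the exchange relation.

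First I would set up the exchange sequences $X^{\ast}\to B\to X\to$ and $X\to B'\to X^{\ast}\to$ from Lemma~\ref{l:seq}, recording that $X^{\ast}$ and all indecomposable summands of $B,B'$ have (ordinary) $T$-denominators by part (ii), and that $X^{\ast}$ is preprojective by part (iii). Mutating the seed corresponding to $\overline{T}'\amalg X^{\ast}$ at $X^{\ast}$, the exchange relation in the cluster algebra reads $x_X\, x_{X^{\ast}} = \prod (\text{monomial from }B) + \prod(\text{monomial from }B')$, and comparing denominators of the two sides gives $d_i = \dim\Hom_{\C}(T_i,X^{\ast}) \;\text{(something)}\; - \dim\Hom_{\C}(T_i,X^{\ast})$ adjustments governed, for each summand $T_i$, by whether $T_i$ is compatible with $(X,X^{\ast})$. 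For every summand $T_i$ that \emph{is} compatible — which by Lemma~\ref{l:to-trans}(a) and Proposition~\ref{p:tamecompatible} is every summand except $M$ itself (and any other non-$\C$-brick summand in the same tube, of which there is at most one by Lemma~\ref{l:homs}(b)) — the standard computation gives $d_i=\dim\Hom_{\C}(T_i,X)$, exactly as in the $T$-denominator case.

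The crux is the single exceptional summand $M$. Here I would feed Lemma~\ref{l:seq}(i), namely
\begin{equation*}
\max(\dim\Hom(M,B),\dim\Hom(M,B')) = \dim\Hom(M,X^{\ast})+\dim\Hom(M,X)-1,
\end{equation*}
into the denominator comparison. Because $X^{\ast}$ is preprojective its $\Hom(M,-)$-contribution matches its $T$-denominator exponent, so solving the exchange relation for the exponent of $y_M$ in the denominator of $x_X$ yields $\dim\Hom(M,X^{\ast})+\dim\Hom(M,X) - \max(\dim\Hom(M,B),\dim\Hom(M,B')) -$ (the exponent already attributed to $x_{X^{\ast}}$). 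Substituting (i) collapses this to $\dim\Hom_{\C}(M,X)-1$, which is precisely the reduced value demanded for summands in the tube $\T_X=\T_M$. This is the step where the $-1$ appears, and it is the only place the non-compatibility of $M$ is used, so I would present it carefully as the heart of the proof.

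The main obstacle I anticipate is bookkeeping the reduced form rather than establishing any new structural fact: one must verify that the monomial $f$ in $x_X=f/m$ is genuinely coprime to each $y_i$ (so that the computed exponents are the true reduced denominator exponents, not an artifact of cancellation), and that the inductive hypothesis can be invoked — the summands of $B$ and $B'$ have ordinary $T$-denominators by Lemma~\ref{l:seq}(ii), but one must confirm this suffices to pin down the full monomial on the right-hand side of the exchange relation. Handling the reducedness/coprimality is exactly the delicate analysis carried out in~\cite[Prop. 3.1]{bmrt}, which is why this proof is advertised as an adaptation of it; I would follow that template, reusing the exact-sequence exactness on the compatible summands and the controlled one-dimensional failure on $M$ to conclude that $m=\prod_i y_i^{d_i}$ with the stated $d_i$, completing the proof of Theorem~A.
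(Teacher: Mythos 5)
Your proposal is essentially the paper's own proof: it uses the exchange triangles supplied by Lemma~\ref{l:seq}, the multiplication formula $x_X x_{X^{\ast}}=x_B+x_{B'}$ of \cite{bmr1}, compatibility (via Proposition~\ref{p:tamecompatible} and Lemma~\ref{l:to-trans}) for every summand of $T$ other than $M$, the defect-one identity of Lemma~\ref{l:seq}(i) to produce the subtracted $1$, and the coprimality/positivity/Laurent-phenomenon analysis adapted from \cite[Prop.~3.1]{bmrt} to rule out cancellation and certify the reduced form. Two small corrections to the write-up: the displayed exponent of $y_M$ should be $\max(\dim\Hom_{\C}(M,B),\dim\Hom_{\C}(M,B'))-\dim\Hom_{\C}(M,X^{\ast})$, which by (i) equals the value $\dim\Hom_{\C}(M,X)-1$ you correctly state (your formula as written simplifies to $1-\dim\Hom_{\C}(M,X^{\ast})$), and the argument is not an induction on quasilength — Lemma~\ref{l:seq}(ii) already supplies all needed $T$-denominators — but it does require the paper's separate Case II, where the transjective exchange partner is isomorphic to $\tau T_i$ for some $i$ (so its cluster variable is $y_i$ rather than of the form $f/t$), a routine adjustment within the \cite{bmrt} bookkeeping you invoke.
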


\begin{proof}
Let $X^*$ be an indecomposable object in $\overline{\W_{\tau M}}$.
By Lemma~\ref{l:seq} there is an indecomposable object $X$ and exchange
triangles
$$X^{\ast} \to B \to X \to $$
and
$$ X \to B' \to X^{\ast} \to $$
such that Lemma~\ref{l:seq} holds.

We have~\cite{bmr1} that
\begin{equation}
x_Xx_{X^{\ast}}=x_B+x_{B'}. \label{mmstar}
\end{equation}
Assume $M=T_l$.
We need to discuss two different cases.

\noindent CASE I:
Suppose that neither $X$ nor $X^{\ast}$ is isomorphic to $\tau T_i$ for
any $i$. 
Let $B=B_0\amalg B_1$, where no summand of $B_0$ is of the form
$\tau T_i$ for any $i$, and $B_1$ is in $\add \tau T$.
Similarly, write $B'=B'_0\amalg B'_1$. Let $t_{B_0}$ denote $\prod t_Y$ where
the product is over all indecomposable direct summands $Y$ of $B_0$; similarly
write $t_{B'_0}$. Note that $t_Y$ is defined in
Definition~\ref{d:Tdenominator}.

We then have $x_B=\frac{f_{B_0}y_{B_1}}{t_{B_0}}$ and
$x_{B'}=\frac{f_{B_0'}y_{B'_1}}{t_{B'_0}}$.

Let $m=\frac{\lcm(t_{B_0},t_{B'_0})}{t_{B_0}}$ and
$m'=\frac{\lcm(t_{B_0},t_{B'_0})}{t_{B'_0}}$. We then have:
$$
(x_{X^{\ast}}) = \frac{x_B + x_{B'}}{x_X} =  
\frac{(f_{B_0} m y_{B_1} + f_{B_0'} m' y_{B'_1})/f_X}{\lcm(t_B,t_{B'})/t_X},
$$
using that $t_B = t_{B_0}$ since $\Hom_{\C}(T_i, \tau T_j) = 0$ for all $i,j$, and
similarly $t_{B'} = t_{B_0'}$.

Since $M = T_l$ is a summand in $T$, we have by Lemma~\ref{l:seq} that
\begin{multline*} \max(\dim \Hom (T_l,B), \dim \Hom(T_l,B')) \\
= \dim \Hom(T_l,X^{\ast}) + \dim \Hom(T_l,X)-1. \end{multline*} 
For any other summand of $T$, say $T_i$ with $i \neq l$, we have that
$T_i$ is compatible with $(X,X^{\ast})$, and hence
\begin{multline*} \max(\dim \Hom (T_i,B), \dim \Hom(T_i,B')) \\
= \dim \Hom(T_i,X^{\ast}) + \dim \Hom(T_i,X). \end{multline*} 
We thus obtain:
\begin{eqnarray*}
t_{X}t_{X^{\ast}} & = & \prod y_i^{\dim \Hom_{\C}(T_i,X) + \dim \Hom_{\C}(T_i,X^{\ast})}   \\
 & = & y_l \cdot  \prod y_i^{\max(\dim\Hom_{\C}(T_i,B),\dim\Hom_{\C}(T_i,B'))}
     = y_l \cdot \lcm(t_B,t_{B'}).
\end{eqnarray*}
Hence
\begin{equation}
x_{X^{\ast}}=\frac{(f_{B_0} m y_{B_1} +f_{B_0'} m' y_{B'_1})/f_X}
{t_{X^{\ast}}/ y_l}.
\label{mstar}
\end{equation}
We have that $m$ and $m'$ are coprime, by definition of least common multiple. 
Since $B$ and $B'$ have no common direct
summands~\cite[6.1]{bmr1}, $y_{B_1}$ and $y_{B'_1}$ are coprime.
Suppose that $m$ and $y_{B'_1}$ had a common factor $y_i$.
Then we would have a summand $Z$
of $B'_0$ such that $\Hom_{\C}(T_i,Z) \neq 0$, and $\tau T_i$ was a
summand of $B'$. But then
$$\Ext^1_{\C}(Z,\tau T_i) \simeq D\Hom_{\C}(\tau T_i,\tau Z)
\simeq  D\Hom_{\C}(T_i,Z) \neq 0.$$
This contradicts the fact that $B'$ is the direct sum of summands of a
cluster-tilting object. Therefore $m$ and $y_{B'_1}$ are coprime,
and similarly $m'$ and $y_{B_1}$ are coprime. It follows
that $my_{B_1}$ and $m' y_{B'_1}$ are coprime.

Since all indecomposable summands of $B$ and $B'$ have $T$-denominators,
it follows (see Definition~\ref{d:Tdenominator}) that
$f_{B_0}(e_i)>0$ and $f_{B_0'}(e_i)>0$ for each $i\in \{1,2,\ldots ,n\}$.
It is clear that $(my_{B_1})(e_i)\geq 0$ and $(m'y_{B'_1})(e_i)\geq 0$.
Using that $my_{B_1}$ and $m'y_{B'_1}$ are coprime, it follows that these
two numbers cannot simultaneously be zero, so
$(f_B m y_{B_1} + f_{B'} m' y_{B'_1})(e_i)>0$.
Hence $f_Bmy_{B_1}+f_{B'}m'y_{B'_1}$ satisfies the positivity condition.
By assumption, $f_X$ also satisfies the positivity condition.

By the Laurent phenomenon~\cite[3.1]{fz02},
$x_{X^{\ast}}$ is a Laurent polynomial in $y_1,y_2,\ldots ,y_n$.
Clearly $t_{X^{\ast}}/y_l$ is also a Laurent polynomial.
Hence  
$u =(f_B my_{B_1} + f_{B'} m' y_{B'_1})/f_X =
\frac{x_{X^{\ast}}t_{X^{\ast}}}{y_l}$
is also a Laurent polynomial.
Since $u$ is defined at $e_i$ for all $i$, it must be a polynomial.
By the above, $u$ satisfies the positivity condition.

We have that $y_l$ divides
$t_{X^{\ast}} = \prod y_i^{\dim \Hom_{\C}(T_i,X^{\ast})}$, since
$\dim \Hom_{\C}(T_l,X^{\ast}) = 2$.
Hence we get that $t_{X^{\ast}}/y_l$ is a monomial.
This finishes the proof in Case (I).

\noindent CASE II:
Assume that $X \simeq \tau T_i$ for some $i$.
Note that $i \neq l$, since $X$ and hence $T_i$ is transjective,
while $T_l$ is regular.

Since $\Ext^1_{\C}(T_r,T_s) = 0$ for all $r,s$, we have that
$X^{\ast} \not \simeq \tau T_j$ for any $j$.

Using Proposition~\ref{p:comp} and Lemma~\ref{l:seq}, we have 
\begin{multline*}
\dim\Hom_{\C}(T_j,X)+\dim\Hom_{\C}(T_j,X^{\ast}) \\
= \max(\dim\Hom_{\C}(T_j,B),\dim\Hom_{\C}(T_j,B'))
+ \epsilon_j,
\end{multline*}
where
$$\epsilon_j = \begin{cases} 1 & \text{if } j=i \text{ or } j=l \\
0 & \text{otherwise} \end{cases}.$$ 

As in Case (I), but using that $x_X = y_i$ (as $X= \tau T_i$), we obtain
the expression
\begin{equation*}
x_{X^{\ast}}=\frac{(f_{B_0} m y_{B_1} +f_{B_0'} m' y_{B'_1})}{\lcm(t_B, t_{B'}) y_i}.
\end{equation*}
Using $\lcm(t_B, t_{B'}) = t_X t_{X^{\ast}} y_i^{-1}  y_l^{-1}$, we get
\begin{equation*}
x_{X^{\ast}}=\frac{(f_{B_0} m y_{B_1} +f_{B_0'} m' y_{B'_1})}
{t_{X^{\ast}}/y_l}.
\end{equation*}
As in Case (I), we get that the numerator satisfies positivity and is a
polynomial, and that $t_{X^{\ast}} y_l^{-1}$ is a monomial. The proof
is complete.
\end{proof}

\section{An example}

We give a small example illustrating the main theorem.

Let $Q$ be the extended Dynkin quiver
$$
\xymatrix{
2 \ar[r] & 3 \ar [d] \\
1 \ar[u] \ar[r] & 4 
}
$$
and let $H= kQ$ be the path algebra.
Then $H$ is a tame hereditary algebra where the AR-quiver has one exceptional tube $\mathcal{T}$, which is of rank 3.
The (exceptional part of) the AR-quiver of $\T$ is as follows, where the   
composition factors (in radical layers) of indecomposable modules are given.

$$
\xymatrix@!C=30pt{
& *+[o][F-]{{\begin{smallmatrix} 1 \\ 2 4 \end{smallmatrix}}} \ar[rd]& & 
{\begin{smallmatrix}1 3 \\ 4 \end{smallmatrix}} \ar[rd]& & {\begin{smallmatrix} 2 \\ 3 \end{smallmatrix}} \ar[rd] &  \\
*+[o][F-]{2} \ar[ru] & & *+[o][F-]{{\begin{smallmatrix} 1 \\ 4 \end{smallmatrix}}} \ar[ru] & & 3 \ar[ru] & & *+[o][F-]{2}  
}
$$

Let $P_i= H e_i$ denote the indecomposable projective $H$-modules.
Let $T = T_1 \amalg T_2 \amalg T_3 \amalg T_4 = 
\tau^{-1} P_4  \amalg \tau^{-1} P_1 \amalg 3 \amalg \begin{smallmatrix} 1 3 \\ 4 \end{smallmatrix}$.
It is easily verified that this is a cluster-tilting object.
The encircled modules in the above figure are those which are in $\W_{\tau T_4}$.

For each exceptional object $Y$ in the tube $\T$, we give
the dimension vector of $\Hom_{\C}(T,Y)$ as a $\End_{\C}(T)^{\op}$-module.
Note that $\Hom_{\C}(T,\tau T_3) =0= \Hom_{\C}(T,\tau T_4)$.
$$
\xymatrix@!C=30pt{
& \ast \ar@{.}[rd]& & 1102 \ar[rd] & & 1122 \ar[rd] &  \\
 0010 \ar@{.}[ru] & & \ast \ar@{.}[ru] & & 1112 \ar[ru]& & 0010 
}
$$

We consider the initial seed $\{x_1, \dots, x_4 \}$ where $x_i = x_{\tau T_i}$.
We give the corresponding cluster variables $x_Y$ (with most of the numerators skipped).
$$
\xymatrix@!C=30pt{
 & x_4 \ar@{.}[rd]& & \frac{\ast}{x_1 x_2 x_4} \ar[rd]& & \frac{\ast}{x_1 x_2 x_3^2 x_4} \ar[rd]& & \\
  \frac{x_4+1}{x_3} \ar@{.}[ru] & & x_3 \ar@{.}[ru] & & \frac{\ast}{x_1 x_2 x_3 x_4} \ar[ru] & & \frac{x_4+1}{x_3}  
}
$$
We observe that the denominators of these cluster variables can be computed from the dimension vectors of the corresponding modules
over $\End_{\C}(T)$, as described by our main Theorem.

\vskip 0.3cm
\noindent \textbf{Acknowledgements:}
We would like to thank the referee for their very helpful comments which
allowed us to improve the presentation of and to correct an earlier version
of the article.
The first named author visited Leeds several times in the academic year
2007-2008 and he would like to thank the School of Mathematics at the
University of Leeds, and in particular Robert Marsh, for their tremendous
hospitality.


\begin{thebibliography}{99} 

\bibitem[AHK]{ahk}
L. Angeleri H\"{u}gel, D. Happel and H. Krause,
\emph{Handbook of tilting theory},
London Mathematical Society Lecture Note Series \textbf{332}.
Cambridge University Press, Cambridge, 2007.

\bibitem[ABS]{abs1}
I. Assem, T. Br\"{u}stle and Ralf Schiffler,
\emph{Cluster-tilted algebras as trivial extensions},
Bull. London Math. Soc. \textbf{40} (2008) 151-162.


\bibitem[ARS]{ars}
M. Auslander, I. Reiten, S. Smal{\o},
\emph{Representation theory of Artin algebras},
Cambridge Studies in Advanced Mathematics, \textbf{36}.
Cambridge University Press, Cambridge, 1997.

\bibitem[ASS]{ass}
I. Assem, D. Simson and A. Skowro\'nski,
\emph{Elements of the representation theory of associative algebras. Vol. 1:
Techniques of representation theory},
London Mathematical Society Student Texts, \textbf{65}.
Cambridge University Press, Cambridge, 2006.

\bibitem[BKL]{bkl}
M. Barot, D. Kussin, H. Lenzing,
\emph{The cluster category of a canonical algebra},
preprint arXiv:0801.4540v1 [math.RT], 2008.

\bibitem[Bo]{bon}
K. Bongartz,
\emph{Tilted algebras},
Representations of algebras (Puebla, 1980), 
 pp. 26--38, Lecture Notes in Math., \textbf{903}, Springer,
Berlin-New York, 1981.  

\bibitem[BCKMRT]{bckmrt} 
A. B. Buan, P. Caldero, B. Keller, R. J. Marsh, I. Reiten and G. Todorov, 
\emph{Appendix to: Clusters and seeds in acyclic cluster algebras},
Proc. Amer. Math. Soc. \textbf{135} (2007), no. 10, 3059--3060.

\bibitem[BM]{bm}
A. B. Buan and R. J. Marsh.
\emph{Cluster-tilting theory},
In: Trends in Representation Theory of Algebras and Related Topics,
Workshop August 11-14, 2004, Queretaro, Mexico.
Editors J. A. de la Pena and R. Bautista, Contemporary Mathematics
\textbf{406} (2006), 1-30.

\bibitem[BMR1]{bmr1} 
A. B. Buan, R. J. Marsh and I. Reiten, 
\emph{Cluster mutation via quiver representations}, 
Comm. Math. Helv. \textbf{83} (2008), no.1, 143--177.

\bibitem[BMR2]{bmr2} 
A. B. Buan, R. J. Marsh and I. Reiten, 
\emph{Denominators of cluster variables}, 
preprint arxiv:0710.4335v1 [math.RT], 2007.

\bibitem[BMRRT]{bmrrt} 
A. B. Buan, R. J. Marsh, M. Reineke, I. Reiten and G. Todorov, 
\emph{Tilting theory and cluster combinatorics}, 
Advances in Mathematics \textbf{204} (2) (2006), 572--618.

\bibitem[BMRT]{bmrt}
A. B. Buan, R. J. Marsh, I. Reiten and G. Todorov,
\emph{Clusters and seeds in acyclic cluster algebras},
Proc. Amer. Math. Soc. \textbf{135} (2007), no. 10, 3049--3060.

\bibitem[CC]{cc}
P. Caldero and F. Chapoton,
\emph{Cluster algebras as Hall algebras of quiver representations},
Commentarii Mathematici Helvetici, \textbf{81}, (2006), 595-616. 

\bibitem[CCS1]{ccs1}
P. Caldero, F. Chapoton and R. Schiffler
\emph{Quivers with relations arising from clusters ($A_n$ case)},
Transactions of the American Mathematical Society \textbf{358} (2006),
1347--1364.

\bibitem[CCS2]{ccs2}
P. Caldero, F. Chapoton and R. Schiffler
\emph{Quivers with relations and cluster-tilted algebras},
Algebras and Representation Theory, 9, No. 4 , (2006), 359--376.

\bibitem[CK1]{ck1}
P. Caldero and B. Keller,
\emph{From triangulated categories to cluster algebras},
Inv. Math. \textbf{172} (2008), 169-211.

\bibitem[CK2]{ck2} 
P. Caldero and B. Keller,
\emph{From triangulated categories to cluster algebras II},
Ann. Sci. Ecole Norm. Sup, 4eme serie, \textbf{39}, (2006), 983--1009.

\bibitem[CZ]{cz}
P. Caldero and A. Zelevinsky,
\emph{Laurent expansions in cluster algebras via quiver representations},
Moscow Math. J.  \textbf{6} (2006),  No. 3, 411-429. 

\bibitem[DWZ]{dwz}
H. Derksen, J. Weyman and A. Zelevinsky,
\emph{Quivers with potentials and their representations II: Applications to cluster algebras.},
preprint arXiv:0904.0676v1 [math.RA], 2009.

%\bibitem[DR]{dlabringel}
%V. Dlab and C. M. Ringel,
%Indecomposable representations of graphs and algebras.
%Mem. Amer. Math. Soc. 6 (1976), no. 173, v+57pp.

\bibitem[D]{dupont}
G. Dupont,
\emph{Cluster multiplication in stable tubes via generalized Chebyshev
polynomials},
preprint arXiv:0801.3964 [math.RT], 2008. To appear in J. Alg. Appl.

\bibitem[FZ1]{fz02} 
S. Fomin and A. Zelevinsky,
\emph{Cluster algebras I: Foundations}, 
J. Amer. Math. Soc. \textbf{15} (2002), no. 2, 497--529.

\bibitem[FZ2]{fz03}
S. Fomin and A. Zelevinsky,
\emph{Cluster algebras: notes for the CDM-03 conference.}
Current developments in mathematics, 2003, 1--34,
Int. Press, Somerville, MA, 2003. 

\bibitem[FZ3]{fz07}
S. Fomin and A. Zelevinsky,
\emph{Cluster algebras IV: Coefficients},
Compositio Math. \textbf{143} (2007), 112--164.

\bibitem[FK]{fk}
C. Fu and B. Keller.
\emph{On cluster algebras with coefficients and 2-Calabi-Yau categories},
preprint arXiv:0710.3152v2 [math.RT], 2007. To appear in Trans. Amer. Math. Soc.

\bibitem[GLS]{gls}
C. Geiss, B. Leclerc and J. Sch\"{o}er,
\emph{Preprojective algebras and cluster algebras},
Conference Proceedings, International Conference on Representations
of Algebras (ICRA), Torun (2007), 31 pages.

\bibitem[Ha]{h}
D. Happel,
\emph{Triangulated categories in the representation theory of
finite-dimensional algebras}, 
London Mathematical Society Lecture Note Series, \textbf{119}.
Cambridge University Press, Cambridge, 1988.

\bibitem[HU]{hu} 
D. Happel and L. Unger, 
\emph{Almost complete tilting modules},
Proc. Amer. Math. Soc. \textbf{107} (3) (1989), 603--610.

\bibitem[Hu]{hubery}
A. Hubery,
\emph{Acyclic cluster algebras via Ringel-Hall algebras},
preprint.

\bibitem[Kel1]{keller1}
B. Keller,
\emph{On triangulated orbit categories},
Documenta Math. 10 (2005), 551--581.

\bibitem[Kel2]{keller2}
B. Keller,
\emph{Categorification of acyclic cluster algebras: an introduction},
preprint arXiv:0801.3103v1 [math.RT], 2008.

%\bibitem[Ker]{kerner}
%O. Kerner,
%Representations of wild quivers.
%Representation theory of algebras and related topics (Mexico City, 1994),  65--107, CMS Conf. Proc., 19, Amer. Math. Soc., Providence, RI, 1996.

\bibitem[M]{musiker}
G. Musiker.
\emph{A graph theoretic expansion formula for cluster algebras of classical
type},
preprint arXiv:0710.3574v2 [math.CO], 2007. To appear in Ann. Comb.

\bibitem[MP]{mp}
G. Musiker and J. Propp,
\emph{Combinatorial Interpretations for Rank-Two Cluster Algebras of Affine
Type},
Electron. J. Combin.  \textbf{14} (2007), no. 1, Research Paper 15,
23 pp.

\bibitem[Pal]{palu}
Y. Palu,
\emph{Cluster characters for triangulated 2-Calabi--Yau categories},
Annales de l'institut Fourier, \textbf{58}, no. 6 (2008), p. 2221-2248.

\bibitem[Par]{parsons}
M. Parsons,
\emph{On indecomposable modules over cluster-tilted algebras of type $A$},
PhD thesis, University of Leicester, UK, 2008.

\bibitem[PA]{aus-pla}
M.I. Platzeck and M. Auslander,
\emph{Representation theory of hereditary Artin algebras}, Representation theory of algebras 
(Proc. Conf., Temple Univ., Philadelphia, Pa., 1976), pp. 389--424. Lecture Notes in Pure Appl. Math., V
ol. 37, Dekker, New York, 1978

\bibitem[Pr]{propp}
J. Propp,
\emph{The combinatorics of frieze patterns and Markoff numbers},
preprint arXiv:math/0511633v3 [math.CO], 2005.

\bibitem[RT]{reitentodorov}
I. Reiten and G. Todorov, unpublished.

\bibitem[RS]{rs}
C. Riedtmann, A. Schofield,
\emph{On a simplicial complex associated with tilting modules},
Comm. Math. Helv. \textbf{66} (1991), no.1, 70--78.

\bibitem[R]{ring}
C. M. Ringel,
\emph{Tame algebras and integral quadratic forms}, 
Springer Lecture Notes in Mathematics \textbf{1099} (1984).

\bibitem[Sc]{schiffler}
R. Schiffler,
\emph{A cluster expansion formula ($A_n$ case)},
Electron. J. Combin. \textbf{15} (2008), \# R64 1.

\bibitem[ST]{st}
R. Schiffler and H. Thomas,
\emph{On cluster algebras arising from unpunctured surfaces},
IMRN, Advance Access published on April 11, 2009; doi: doi:10.1093/imrn/rnp047.

\bibitem[SZ]{sz}
P. Sherman and A. Zelevinsky,
\emph{Positivity and canonical bases in rank $2$ cluster algebras of finite and
affine types},
Moscow Math. J. \textbf{4} (2004), No. 4, 947-974. 

\bibitem[St]{strauss}
H. Strauss,
\emph{On the perpendicular category of a partial tilting module},
J. Algebra \textbf{144} (1991), no. 1, 43--66.

\bibitem[XX1]{xx1}
Jie Xiao and Fan Xu,
Green's formula with $\mathbb{C}^{*}$-action and Caldero-Keller's formula for
cluster algebras.
preprint arXiv:0707.1175 [math.QA], 2007.

\bibitem[XX2]{xx2}
Jie Xiao and Fan Xu,
\emph{A multiplication formula for algebras with 2-Calabi-Yau properties},
preprint arXiv:0804.2014 [math.RT], 2008.

\bibitem[XX3]{xx3}
Jie Xiao and Fan Xu,
\emph{Green formula in Hall algebras and cluster algebras},
preprint arXiv:0804.2015 [math.RT], 2008.

\bibitem[YZ]{yz}
Shih-Wei Yang and A. Zelevinsky,
\emph{Cluster algebras of finite type via Coxeter elements and principal
minors},
Transformation Groups \textbf{13} (2008), No. 3-4, 855-895.

\bibitem[Ze]{zelevinsky}
A. Zelevinsky,
\emph{Semicanonical basis generators of the cluster algebra of type
$A_1^{(1)}$}
Electon. J. Combin. \textbf{14} (2007), no. 1, Note 4, 5 pp. (electronic).

\bibitem[Zh]{zhu}
Bin Zhu,
\emph{Preprojective cluster variables of acyclic cluster algebras},
Comm. Algebra \textbf{35} (2007) no.9, 2857--2871.

\end{thebibliography}
\end{document}